\newcommand{\vertiii}[1]{{\left\vert\kern-0.25ex\left\vert\kern-0.25ex\left\vert #1
    \right\vert\kern-0.25ex\right\vert\kern-0.25ex\right\vert}}
\tikzset{
startstop/.style={
    rectangle,
    rounded corners,
    minimum width=3cm, 
    minimum height=1cm,
    align=center, 
    draw=black, 
    fill=blue!30
    },
io/.style={
    trapezium, 
    trapezium left angle=70, 
    trapezium right angle=110, 
    minimum width=1cm, 
    minimum height=1cm, 
    text centered, 
    draw=black, 
    fill=red!30
    },
process/.style={
    rectangle, 
    minimum width=3cm, 
    minimum height=1cm, 
    text centered, 
    draw=black, 
    fill=orange!30
    },
decision/.style={
    diamond, 
    minimum width=3cm, 
    minimum height=1cm, 
    text centered, 
    draw=black, 
    fill=green!30
    },
arrow/.style={
    draw,
    thick,
    ->,
    >=stealth
    },
}
\begin{document}

\title[A Taxonomy of Crystallographic Sphere Packings]{A Taxonomy of \\Crystallographic Sphere Packings}

\author{Debra Chait}
\email{debra.chait@macaulay.cuny.edu}

\author{Alisa Cui}
\email{alisa.cui@yale.edu}

\author{Zachary Stier}
\email{zstier@princeton.edu}
\thanks{This work was supported by NSF grant DMS-1802119 at the DIMACS REU hosted at Rutgers University--New Brunswick.}

\date{July 27, 2018.}

\keywords{crystallographic sphere packing, hyperbolic reflection groups, arithmetic groups, Coxeter diagram, Vinberg's algorithm}

\begin{abstract}
The Apollonian circle packing, generated from three mutually-tangent circles in the plane, has inspired over the past half-century the study of other classes of space-filling packings, both in two and in higher dimensions. Recently, Kontorovich and Nakamura introduced the notion of crystallographic sphere packings, $n$-dimensional packings of spheres with symmetry groups that are isometries of $\H^{n+1}$.
There exist at least three sources which give rise to crystallographic packings, namely polyhedra, reflective extended Bianchi groups, and various higher dimensional quadratic forms.
When applied in conjunction with the Koebe-Andreev-Thurston Theorem, Kontorovich and Nakamura's Structure Theorem guarantees crystallographic packings to be generated from polyhedra in $n=2$. The Structure Theorem similarly allows us to generate packings from the reflective extended Bianchi groups in $n=2$ by applying Vinberg's algorithm to obtain the appropriate Coxeter diagrams. In $n>2$,
the Structure Theorem when used with Vinberg's algorithm allows us to explore whether certain Coxeter diagrams in $\mathbb{H}^{n+1}$ for a given quadratic form admit a packing at all. 
Kontorovich and Nakamura's Finiteness Theorem shows that there exist only finitely many classes of superintegral such packings, all of which exist in dimensions $n\le20$. 
In this work, we systematically determine all known examples of crystallographic sphere packings.

\end{abstract}

\maketitle

\tableofcontents

\section{Introduction}
\label{intro}

\begin{figure}[h]
\resizebox{!}{.5\textheight}{
\begin{tikzpicture}[node distance=2cm]

\node (Poly) [startstop] {Polyhedra (\secref{polyhedral packings})};
\node (Bianchi) [startstop, right of=Poly, xshift=4.5cm] {Dimension $n\ge3$\footnotemark\ (\secref{hd})};
\node (High) [startstop, right of=Bianchi, xshift=3cm] {Bianchi groups (\secref{bianchi packings})};

\node (KAT) [io, below of=Poly] {Apply K-A-T Theorem};
\node (quad) [process, below of=Bianchi] {Select quadratic form};
\node (Vin) [process, below of=quad] {Apply Vinberg's algorithm \cite{Vin72}};
\node (fund) [process, below of=Vin] {Obtain fundamental polyhedron};
\node (Cox) [process, below of=fund] {Describe with Coxeter diagram};
\node (Struc) [io, below of=Cox] {Apply Structure Theorem \cite{KN18}};

\node (pack) [startstop, below of=Struc] {Generate circle packing};

\draw [arrow] (Bianchi) -- (quad);
\draw [arrow] (High) |- (quad);
\draw [arrow] (quad) -- (Vin);
\draw [arrow] (Vin) -- (fund);
\draw [arrow] (fund) -- (Cox);
\draw [arrow] (Cox) -- (Struc);
\draw [arrow] (Struc) -- (pack);
\draw[arrow] (Poly) -- (KAT);
\draw[arrow] (KAT) |- (Struc);

\draw[thick,blue,arrow] (High) |- node[anchor=west,yshift=3cm]{\cite{BM13,Mcl10,Vin72}}(Cox);

\end{tikzpicture}
}
\caption{An outline of how each of our packings arises.}
\label{flow}
\end{figure}
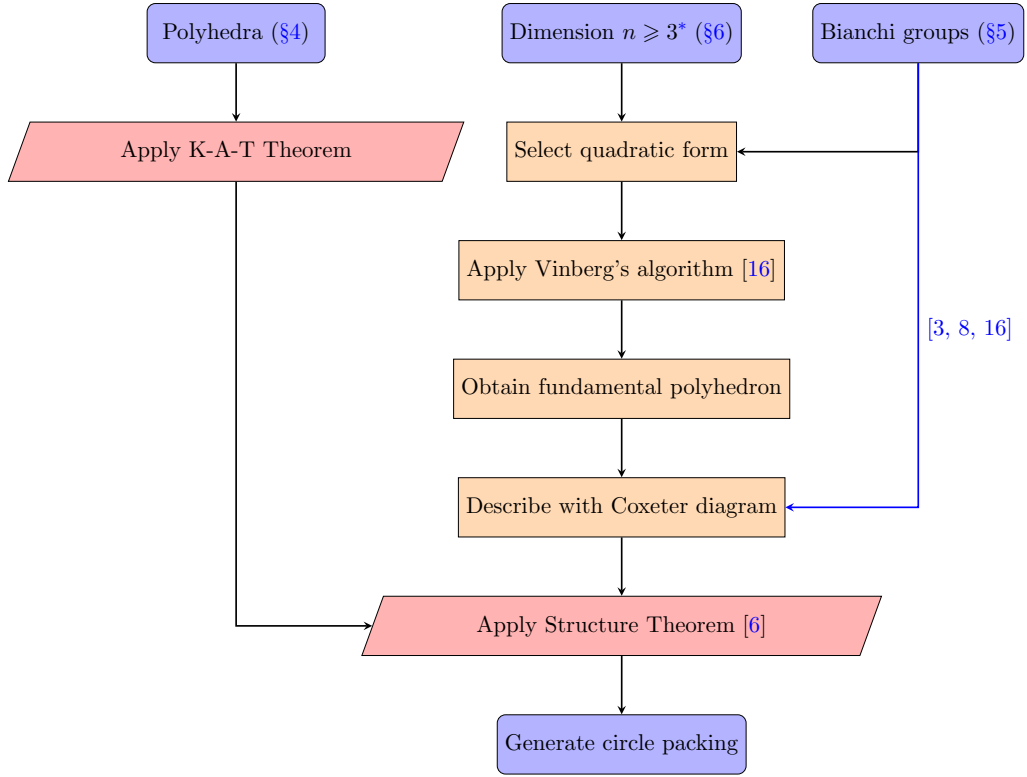

\smallskip \footnotetext{The arrow connecting \emph{Bianchi groups} to \emph{Coxeter diagram} via \cite{BM13,Mcl10,Vin72} should be taken to connect \emph{Dimension $n\ge3$} to \emph{Coxeter diagram} as well. This arrow indicates that our research relied on Belolipetsky \& McLeod's and Vinberg's conversions of Bianchi groups and higher dimensional forms into Coxeter diagrams, performed through the steps indicated.}

\begin{Def}[sphere packing]\label{packing}
A \emph{sphere packing} in $\R^n \cup \{\infty\}$ is a collection of spheres that: 
\begin{itemize}
	\item are oriented to have mutually disjoint interiors, and
	\item densely fill up space, so that any ball in $\R^n$ intersects the interior of some sphere in the packing. 
\end{itemize} 
\end{Def}

\begin{Def}[crystallographic sphere packing]\label{cryst packing}
A \emph{crystallographic sphere packing} in $\R^n$ is a sphere packing generated by a finitely generated reflection group $\Gamma < \isom(\H^{n+1})$ \cite{KN18}.
\end{Def}

The Structure Theorem from \cite{KN18} allows us to identify crystallographic sphere packings as finite collections of generating spheres. 

\begin{thm}[Structure Theorem for Crystallographic Packings]\label{structthm}
Consider a finite collection of $n$-spheres $\widetilde\cC$, called the \emph{supercluster}, where $\widetilde\cC$ can be decomposed into finite collections $\cC,\hat\cC$, called the \emph{cluster} and \emph{cocluster}, respectively, with $\widetilde\cC=\cC \sqcup \hat\cC$. Suppose $\Gamma=\left\langle\hat\cC\right\rangle$ acts on $\H^{n+1}$ with finite covolume. If $\widetilde\cC$ satisfies:
\begin{itemize}
    \item any two spheres in $\cC$ are disjoint or tangent, and 
    \item every sphere in $\cC$ is disjoint, tangent, or orthogonal to any sphere in $\hat\cC$,
\end{itemize}
then $\Gamma$ produces a crystallographic sphere packing via $\Gamma\cdot\cC$. Conversely, every crystallographic packing arises in this way. 
\end{thm}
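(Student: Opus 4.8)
The plan is to translate the statement into the language of totally geodesic hyperplanes in $\H^{n+1}$ and the reflections fixing them, and then to reduce both directions to a single dichotomy governed by the inversive product of two spheres. Recall that an $n$-sphere $S\subset\R^n\cup\{\infty\}=\partial\H^{n+1}$ bounds a unique totally geodesic hyperplane $H_S\subset\H^{n+1}$, and that the relative position of two spheres $S,S'$ is read off from their inversive product $\langle S,S'\rangle$ (equivalently, the Lorentzian inner product of their normalized inversive-coordinate vectors in $\R^{n+1,1}$): they are tangent when $|\langle S,S'\rangle|=1$, disjoint or nested when $|\langle S,S'\rangle|>1$, orthogonal when $\langle S,S'\rangle=0$, and cross with overlapping interiors precisely when $0<|\langle S,S'\rangle|<1$. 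Inversion in $S$ is the hyperbolic reflection $R_S$ fixing $H_S$, so $\Gamma=\langle\hat\cC\rangle$ is a hyperbolic reflection group, and the finite-covolume hypothesis makes it a lattice whose fundamental domain is the Coxeter polyhedron $P$ with walls $H_{\hat C}$, $\hat C\in\hat\cC$. Under this dictionary ``mutually disjoint interiors'' becomes ``pairwise non-crossing,'' i.e.\ every pairwise inversive product has absolute value at least $1$. The computational engine throughout is the one-line identity $\langle C,R_{\hat C}C\rangle=1-2\langle C,\hat C\rangle^2$, whose absolute value is at least $1$ exactly when $\langle C,\hat C\rangle=0$ (orthogonal, in which case $R_{\hat C}C=C$) or $|\langle C,\hat C\rangle|\ge1$ (tangent or disjoint).

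For the forward direction I would first record the elementary \emph{single-wall lemma}: by the identity above, if a cluster sphere $C$ is orthogonal to a wall $\hat C$ then $R_{\hat C}C=C$, while if $C$ is tangent to or disjoint from $\hat C$ then $H_C$ lies in one closed half-space bounded by $H_{\hat C}$ and $R_{\hat C}H_C$ lies in the other, so $C$ and $R_{\hat C}C$ are non-crossing. I would then prove the key claim that $C'$ and $\gamma C$ are non-crossing for all $C,C'\in\cC$ and $\gamma\in\Gamma$; since $\Gamma$ acts by inversive-product-preserving isometries, this is equivalent to the full orbit $\Gamma\cdot\cC$ being a non-crossing family. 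The claim is proved by induction on the word length of $\gamma$ in the reflection generators, the base case being exactly the first bullet of the hypothesis. The inductive step runs along a gallery of tiles $P,\gamma_1P,\dots,\gamma P$ connecting $P$ to $\gamma P$: at each wall crossing the single-wall lemma forces the cluster hyperplanes either to continue orthogonally into the neighboring tile or to stay separated on one side, so non-crossing is propagated tile by tile. Finally, density follows from finite covolume: $\Gamma$ is a lattice, so its limit set is all of $\partial\H^{n+1}$, whence the diameters of the $\gamma C$ shrink to $0$ and accumulate on every boundary point, and the residual set has empty interior.

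The converse is comparatively soft. Given a crystallographic packing $\mathcal{P}$ generated by a finitely generated reflection group $\Gamma<\isom(\H^{n+1})$, I would take $\hat\cC$ to be the finite set of spheres dual to the mirrors of a reflection generating set, so that $\Gamma=\langle\hat\cC\rangle$, and—using that $\mathcal{P}$ densely fills, so that $\Gamma$ has full limit set and finite covolume—I would take $\cC$ to be the finite set of spheres of $\mathcal{P}$ meeting $P$, giving $\mathcal{P}=\Gamma\cdot\cC$. The first bullet is immediate, since any two spheres of $\cC\subset\mathcal{P}$ have disjoint interiors and are therefore disjoint or tangent. For the second bullet, $\Gamma$-invariance gives $R_{\hat C}C\in\mathcal{P}$ for each $C\in\cC$ and each wall $\hat C$, so $C$ and $R_{\hat C}C$ must be non-crossing; by the identity $\langle C,R_{\hat C}C\rangle=1-2\langle C,\hat C\rangle^2$ the surviving possibilities are exactly orthogonal, tangent, or disjoint.

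The main obstacle is the forward non-crossing claim: propagating pairwise non-crossing across the whole tessellation rather than merely across one wall. The subtlety is that two cluster hyperplanes sitting in far-apart tiles must be compared, and one must check that the gallery induction genuinely controls every such pair, including the degenerate behavior at parabolic tangency points on the boundary where infinitely many spheres accumulate. Making the inductive step airtight—verifying that orthogonal continuation and half-space separation together close up under arbitrary words in $\Gamma$—is where the real work lies; the inversive-coordinate bookkeeping and the density statement are then routine once $\Gamma$ is known to be a lattice.
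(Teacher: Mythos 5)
First, a point of order: the paper you were asked to match contains no proof of this theorem at all. The Structure Theorem is imported verbatim from Kontorovich--Nakamura \cite{KN18} and used throughout as a black box, so there is no internal argument to compare yours against; what follows measures your sketch against what a complete proof must accomplish. With that said, parts of your outline are sound. The single-wall lemma is correct: with inversive coordinates normalized so that $\langle v,v\rangle=1$ (or $-1$, with the sign of the identity flipped accordingly), one has $\langle C, R_{\hat C}C\rangle = 1-2\langle C,\hat C\rangle^{2}$, and $\abs{1-2t^{2}}\ge 1$ precisely when $t=0$ or $\abs{t}\ge 1$. The converse direction is also essentially right, though two points are glossed over: finiteness of your $\cC$ is not automatic (packing spheres accumulate at cusps of the fundamental polyhedron, so one must define ``meeting $P$'' carefully), and deducing finite covolume from dense filling silently invokes geometric finiteness of finitely generated reflection groups --- full limit set alone does not imply finite covolume for general discrete subgroups of $\isom(\H^{n+1})$.

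The genuine gap is the one you flagged yourself, and it is not a technicality: your induction on word length does not close. If $\gamma=R_{1}\gamma'$ is reduced and you want to compare $C'$ with $\gamma C$, the inductive hypothesis controls the pair $(C',\gamma'C)$; applying $R_{1}$ to carry $\gamma'C$ to $\gamma C$ transforms the pair into $(R_{1}C',\gamma C)$, not $(C',\gamma C)$, so the single-wall lemma --- which only compares a sphere with its \emph{own} reflection --- never gets purchase on the pair you actually need. Moreover, the natural repair, a ping-pong argument asserting that a reduced word $R_{1}\cdots R_{k}$ pushes the interior of $C$ strictly into the half-space beyond the mirror of $R_{1}$, fails exactly in the orthogonal case: when $C\perp\hat C$, the interior of $C$ straddles the wall $H_{\hat C}$, so cluster interiors are not confined to a chamber and the ``half-space separation propagated tile by tile'' that your gallery argument relies on is simply unavailable. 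A correct proof must work modulo the stabilizer of $C$ in $\Gamma$ (which contains every reflection in a wall orthogonal to $C$), inducting on minimal-length coset representatives, or equivalently must establish a stronger invariant --- that the $\Gamma$-orbit of the interior of $C$ meets the chamber interior in a controlled way and nests coherently with the gallery structure. That bookkeeping, not the inversive-product identity and not the density argument, is the actual content of the forward direction, and your proposal acknowledges it without supplying it.
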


\begin{Def}[superpacking]\label{super}
A \emph{superpacking} is a configuration of spheres generated by the action  $\widetilde\Gamma \cdot \cC$, where $\widetilde\Gamma=\left\langle\hat\cC,\cC\right\rangle$. 
\end{Def}

Note that this is not a packing in the sense of \defref{packing} because the interiors are not necessarily mutually disjoint \cite{KN18}.

To study sphere packings, we identify the \textit{bend} of a sphere as the inverse of the radius.
\begin{Def}[integral, superintegral]
If every sphere in a crystallographic packing has integer bend, then it is an \emph{integral} packing. If every sphere in a superpacking has integer bend, then it is a \emph{superintegral} packing. 
\end{Def}

\smallskip The following theorem from \cite{KN18} motivates our work towards classifying all crystallographic sphere packings.
\begin{thm}[Finiteness Theorem] \label{finite}
Up to commensurability of $\widetilde\Gamma$, there are finitely many superintegral crystallographic sphere packings, all of which exist in dimension $n < 21$. 
\end{thm}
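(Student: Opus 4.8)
The plan is to translate superintegrality into an arithmeticity condition on $\widetilde\Gamma$ and then invoke the classification of reflective Lorentzian lattices. First I would pass to inversive coordinates, representing each $n$-sphere by a vector $\mathbf{v}\in\R^{n+1,1}$ on the quadric $\langle\mathbf{v},\mathbf{v}\rangle=1$, normalized so that one distinguished coordinate records the bend. In these coordinates $\isom(\H^{n+1})$ acts as $\mathrm{O}(n+1,1)$, the reflection in a sphere $s$ is the orthogonal reflection $\mathbf{x}\mapsto\mathbf{x}-2\frac{\langle\mathbf{x},\mathbf{s}\rangle}{\langle\mathbf{s},\mathbf{s}\rangle}\mathbf{s}$, and the superpacking is the orbit $\widetilde\Gamma\cdot\cC$. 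Superintegrality says precisely that the bend coordinate is $\mathbb{Z}$-valued on this entire orbit.

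The crux is to upgrade this to the statement that $\widetilde\Gamma$ preserves an integral quadratic lattice. I would emphasize that arithmeticity is \emph{not} free here: since $\mathrm{O}(n+1,1)$ has real rank one, Margulis superrigidity does not apply, and non-arithmetic finite-covolume hyperbolic reflection groups genuinely exist. The integral structure must instead be squeezed out of the bends. Integrality of the bend functional across the whole orbit, combined with the fact that $\widetilde\Gamma$ is generated by the reflections in $\widetilde\cC$, forces the Gram matrix of $\widetilde\cC$ to have rational entries; clearing denominators produces an integral Lorentzian lattice $L$ of signature $(n+1,1)$ that is preserved by $\widetilde\Gamma$, so that $\widetilde\Gamma\subset\mathrm{O}(L,\mathbb{Z})$. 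Since $\widetilde\Gamma$ contains the finite-covolume group $\Gamma=\langle\hat\cC\rangle$, it too acts with finite covolume; hence $\widetilde\Gamma$ is a finite-covolume reflection subgroup of $\mathrm{O}(L,\mathbb{Z})$, i.e.\ $L$ is \emph{reflective} and $\widetilde\Gamma$ is arithmetic, defined over $\mathbb{Q}$.

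With superintegral packings now matched (up to commensurability of $\widetilde\Gamma$) with reflective Lorentzian $\mathbb{Z}$-lattices, both halves of the theorem follow from known classification results. For finiteness, Nikulin proved that there are only finitely many reflective hyperbolic lattices in each rank, while Agol--Belolipetsky--Storm--Whyte established finiteness of arithmetic hyperbolic reflection groups across all dimensions at once; together these bound the number of commensurability classes of $\widetilde\Gamma$. For the dimension bound, the work of Nikulin, Esselmann, and Vinberg shows that a reflective Lorentzian $\mathbb{Z}$-lattice has rank at most $22$, so its signature $(n+1,1)$ satisfies $n+1\le 21$, giving $n<21$.

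The main obstacle is the middle step, the production of the integral lattice $L$. All of the genuine content lives there: one must rule out that integer bends are merely an artifact of a non-arithmetic group, and this demands control of the bends over the entire orbit $\widetilde\Gamma\cdot\cC$ rather than on the finite generating configuration alone. The finite-covolume hypothesis is indispensable here---not to invoke superrigidity, which fails in rank one, but to ensure that $\widetilde\Gamma$ is an honest lattice, so that reflectivity of $L$ and hence the finiteness and dimension theorems may be brought to bear.
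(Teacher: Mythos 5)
You should know at the outset that the paper never proves this statement: the Finiteness Theorem is imported verbatim from \cite{KN18}, and the only related ingredient the paper records is \thmref{superint} (superintegrality implies $\widetilde\Gamma$ is arithmetic), also quoted from \cite{KN18} without proof. Measured against that source, your architecture is the correct one and is essentially the same as the intended proof: superintegrality produces a $\widetilde\Gamma$-invariant integral Lorentzian lattice, hence arithmeticity over $\mathbb{Q}$; finiteness up to commensurability then follows from the finiteness of maximal arithmetic hyperbolic reflection groups (Nikulin, Agol--Belolipetsky--Storm--Whyte); and the dimension bound follows from the Esselmann--Nikulin--Vinberg rank bound, $n+2\le 22$, i.e.\ $n<21$. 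Your remark that superrigidity is unavailable in real rank one, so the integral structure must be extracted from the bends, is also exactly the right framing.

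The genuine gap is the step you yourself flag as ``the main obstacle'': you assert, but never execute, the claim that integer bends across the whole orbit force a rational Gram matrix, and without a mechanism this is precisely the unsupported jump from ``one linear functional is $\Z$-valued on an orbit'' to ``the group preserves a lattice.'' The missing mechanism is a one-line identity. With inversive coordinates normalized by $\ip{v}{v}_Q=-1$, reflection acts by $R_u(w)=w+2\ip{w}{u}_Q\,u$ (as in the discussion following \defref{reflmatrix}), so comparing bend coordinates gives $b_{R_u(w)}=b_w+2\ip{w}{u}_Q\,b_u$. If $u,w$ lie in the superpacking, then so does $R_u(w)$, because $R_u$ is a $\widetilde\Gamma$-conjugate of a generating reflection; superintegrality then makes all three bends integers, whence $2\ip{w}{u}_Q\,b_u\in\Z$. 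Choosing a basis $u_1,\dots,u_{n+2}$ of $\R^{n+2}$ consisting of orbit vectors with nonzero bends (possible since the orbit spans and is closed under the $R_u$), and setting $N=2\,\mathrm{lcm}_i|b_{u_i}|$, every orbit vector $w$ satisfies $\ip{w}{u_i}_Q\in\frac{1}{N}\Z$ for all $i$; since $w\mapsto(\ip{w}{u_i}_Q)_i$ is a linear injection, the entire orbit lies in a fixed lattice, and its $\Z$-span is the desired full-rank $\widetilde\Gamma$-invariant lattice $L$. This, not the Gram matrix of the finite supercluster $\widetilde\cC$ alone, is what your argument needs. One smaller inaccuracy: ``$\widetilde\Gamma$ contains the lattice $\Gamma$, hence has finite covolume'' requires first knowing $\widetilde\Gamma$ is discrete (a nondiscrete group containing a lattice is not a lattice); discreteness is where the tangency/orthogonality conditions of \thmref{structthm} on cluster--cocluster angles actually enter.
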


In this paper, we categorize the integrality and nonintegrality of Bianchi packings in \secref{bianchi packings} and \secref{int and nonint bianchi}, and list all known examples and identify a new integral polyhedron in \secref{int poly}, a packing in dimension two in \secref{beyond} and packings in dimensions four, five, six, seven, nine, ten and 12 in \secref{hdpackingdata}.

\section{Further Objects}\label{fo}

\begin{Def}[oriented spheres]\label{orientedspheres}
For $r\in\R\backslash\{0\}$, the sphere centered at $z$ with radius $r$ is the set $\partial B_z(r)$, and we define its interior to be $\{x\in\hat{R^n}\mid (r-\abs{z-x})\sign{r}>0\}$. 
\end{Def}

\begin{Def}[circle inversion/reflection]\label{inversion}
To invert about $\partial B_z(r)$ (the points in $\hat{\R^n}$ at distance exactly $r$ from $z$), send the point $x\in\hat{\R^n}$ at distance $d=\abs{x-z}$ from $z$ to the point on the ray through $x$ beginning at $z$ at distance $\frac{r^2}{d}$. (This also swaps $z$ and $\infty$.)
\end{Def}

A symmetric $(n+2) \times (n+2)$ matrix $Q$ with signature $(1,n+1)$ gives rise to a model of hyperbolic space through one sheet of the two sheeted hyperboloid $\{x\in\R^{n+2}\mid \ip{x}{x}_Q=-1\}$, where $\ip{x}{y}_Q=xQy^T$. We use here \begin{align}\label{Q}
    Q=Q_n=\begin{pmatrix}&\half\\\half\\&&-I_n\end{pmatrix},
\end{align} where the subscript $n$ may be omitted depending on context. 

\begin{lemma}
When viewed in an upper half-space model, $\H^{n+1}$'s planes are precisely the hemihyperspheres with circumferences on the boundary of space, namely $\R^n$. In the case of a hyperplane as the boundary, the plane is a plane in the Euclidean sense which is orthogonal to the boundary hyperplane. 
\end{lemma}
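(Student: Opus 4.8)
The plan is to work in the upper half-space model $\{(x,t):x\in\R^n,\ t>0\}$ with its hyperbolic metric $ds^2=(\lvert dx\rvert^2+dt^2)/t^2$, and to identify the totally geodesic hypersurfaces (the ``planes'' of $\H^{n+1}$) by a local existence-and-uniqueness argument rather than by solving the geodesic equations directly. Recall the rigidity fact that in a space of constant curvature there is, through each point $p$ and each hyperplane $H_p\subset T_p\H^{n+1}$, exactly one totally geodesic hypersurface tangent to $H_p$ at $p$; this is what I would invoke at the end. The strategy is therefore to exhibit enough totally geodesic hypersurfaces of the asserted form to account for every pointed tangent hyperplane $(p,H_p)$, and then appeal to uniqueness.

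First I would verify that each candidate surface is totally geodesic by realizing it as the fixed-point set of an isometric involution. A vertical Euclidean hyperplane $V=\{x:\langle x,u\rangle=c\}$ is the fixed set of the Euclidean reflection across $V$, which visibly preserves $ds^2$; since the fixed-point set of any isometric involution is totally geodesic, $V\cap\H^{n+1}$ is a plane. Likewise, a Euclidean hemisphere $S$ centered at a boundary point $z\in\R^n$ of radius $r$ is the fixed-point set of inversion in $\partial B_z(r)$ (see Definition~\ref{inversion}); the content here is that this inversion restricts to an isometry of the half-space metric, and its fixed locus is exactly $\{\lvert x-z\rvert=r,\ t>0\}=S\cap\H^{n+1}$. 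Thus both families consist of genuine planes.

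Next I would count. Given $(p,H_p)$, elementary Euclidean incidence geometry shows there is exactly one surface of the form ``hemisphere centered on $\R^n$'' or ``vertical hyperplane'' through $p$ with the prescribed tangent hyperplane: the vertical case occurs precisely when $H_p$ is itself vertical, and otherwise the requirement of being a sphere orthogonal to $\{t=0\}$ (equivalently, centered on $\R^n$) and tangent to $H_p$ at $p$ pins down the center and radius uniquely. A dimension check corroborates this: the hemispheres form an $(n+1)$-parameter family (center in $\R^n$ plus radius), matching the expected dimension of the space of hyperplanes of $\H^{n+1}$, with the vertical hyperplanes appearing as the codimension-one ``radius $\to\infty$'' boundary of this family. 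Combining these steps with the rigidity fact, the surface produced in the count is the unique totally geodesic hypersurface through $(p,H_p)$; running $(p,H_p)$ over all pointed tangent hyperplanes shows that the hemispheres and vertical planes are precisely the planes of $\H^{n+1}$. Finally, a Euclidean sphere meets $\R^n=\{t=0\}$ orthogonally if and only if its center lies on $\R^n$, so the ``circumference'' $S\cap\R^n$ is the equatorial $(n-1)$-sphere on the boundary and the hyperplane-boundary case is exactly the vertical planes, matching the statement verbatim.

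The main obstacle I anticipate is the isometry claim in the second paragraph: showing that inversion in a boundary-centered sphere preserves $ds^2=(\lvert dx\rvert^2+dt^2)/t^2$. This is where the special role of $\R^n$ as the boundary enters, and it is the one genuinely computational point; once it is in hand, everything else is either the standard constant-curvature rigidity statement or elementary Euclidean geometry. An alternative route that sidesteps the direct computation is to quote that $\isom(\H^{n+1})$ is the Poincar\'e extension of the M\"obius group of $\widehat{\R^n}$, which acts transitively on planes and carries Euclidean spheres and hyperplanes to Euclidean spheres and hyperplanes while preserving angles; conformality then forces the image of one standard vertical plane to remain orthogonal to the boundary, yielding the same classification.
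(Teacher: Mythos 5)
The paper offers no proof of this lemma: it is stated as background (standard hyperbolic geometry) in Section~\ref{fo} and never argued, so there is no in-paper argument to compare yours against. Judged on its own merits, your proposal is correct and complete in outline. The skeleton --- (i) fixed-point sets of isometric involutions are totally geodesic, (ii) Euclidean reflections in vertical hyperplanes and inversions in boundary-centered spheres are isometries of $ds^2=(\lvert dx\rvert^2+dt^2)/t^2$, (iii) through every pointed tangent hyperplane $(p,H_p)$ there is exactly one surface from these two families (vertical hyperplane iff $H_p$ contains the vertical direction, otherwise the Euclidean normal line from $p$ meets $\{t=0\}$ in a unique center), and (iv) the constant-curvature uniqueness of totally geodesic hypersurfaces through $(p,H_p)$ --- is a standard and airtight route, and your closing observation that a Euclidean sphere meets $\{t=0\}$ orthogonally iff its center lies on $\R^n$ correctly ties the classification to the phrasing of the statement. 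The one step you defer, that inversion in a sphere centered at a boundary point preserves the half-space metric, is genuinely the computational crux, but it is true and routine: the inversion is conformal with factor $r^2/\lvert x-z\rvert^2$ and sends $t\mapsto r^2t/\lvert x-z\rvert^2$, so the factors cancel in $ds^2$; your alternative of quoting that $\isom(\H^{n+1})$ is the Poincar\'e extension of the M\"obius group of $\widehat{\R^n}$ is an equally acceptable way to discharge it. Note also that the paper's own Definition~\ref{inversion} supplies exactly the inversion map you invoke, so your proof meshes well with the surrounding text.
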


\begin{Def}[inversive coordinates]\label{invcoords}
An oriented sphere centered at $z$ with radius $r\in\R\backslash\{0\}$ may be represented by \emph{inversive coordinates} consisting of $\left(\hat{b},b,bz\right)$ for
\begin{align*}
    b=\frac{1}{r}\text{ and }\hat{b}=\frac{1}{\hat{r}},
\end{align*}
where $\hat{r}$ is the oriented radius of $\partial B_z(r)$ reflected through $\partial B_0(1)$. We refer to $b$ as the \emph{bend} and $\hat{b}$ as the \emph{co-bend}.
\end{Def}

As shown in \cite{Kon17}, any $n$-dimensional inversive coordinate $v$ satisfies $\ip{v}{v}_Q=-1$. This leads to the following definition for reflecting about an oriented sphere: 

\begin{Def}[reflection matrix]\label{reflmatrix}
The \emph{reflection matrix} about $\hat{v}$ is given by $R_{\hat{v}}=I_n+2Q\hat{v}^T\hat{v}$. 
\end{Def}

This arises from the formula for reflection of $v$ about $\hat{v}$ with the inner product $\ip{\cdot}{\cdot}$ given as $R_{\hat{v}}(v)=v-2\frac{\ip{v}{\hat{v}}}{\ip{\hat{v}}{\hat{v}}}\hat{v}$ which expands in our inner product as $R_{\hat{v}}(v)=v+2vQ\hat{v}^T\hat{v}$ and is a right-acting matrix on $v$. 

We are also equipped to use inversive coordinates to represent ``degenerate spheres" of ``radius infinity," i.e. codimension-1 hyperplanes in $\R^n$. 

\begin{lemma}\label{planefacts}
Consider a hyperplane $H$ with $\codim H=1$, normal vector $\hat{n}$, and $P\in H$ the closest point to the origin, and let $S_r\subset\R^n$ be the sphere of radius $r$ tangent to $P$ with interior on opposite half-planes from the origin. Then, 
\begin{align}
\label{hyperplane1}\lim\limits_{r\to\infty}bz&=\hat{n},\\
\label{hyperplane2}\lim\limits_{r\to\infty}\hat{b}&=2\abs{P}
\end{align}
for $z,b,\hat{b}$ dependent on $r$. 
\end{lemma}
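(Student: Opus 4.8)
The plan is to choose coordinates adapted to $H$, write down the sphere $S_r$ and its inversive data explicitly as functions of $r$, and then pass to the limit. Since $\hat{n}$ is the unit normal to $H$ and $P$ is the foot of the perpendicular from the origin, I would orient $\hat{n}$ so that $P=\abs{P}\,\hat{n}$, i.e.\ $\hat{n}$ points from the origin toward $H$; then the origin sits on the $-\hat{n}$ side of $H$ at distance $\abs{P}$. Because $S_r$ is tangent to $H$ at $P$ and is required to have its interior on the half-space opposite the origin, its center must lie at distance $r$ from $P$ in the $+\hat{n}$ direction, so that
\begin{align*}
z=P+r\hat{n}=(\abs{P}+r)\,\hat{n},
\end{align*}
while the bend is $b=1/r$.

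Equation \eqref{hyperplane1} then follows immediately, since
\begin{align*}
bz=\frac1r\,(\abs{P}+r)\,\hat{n}=\Bigl(1+\tfrac{\abs{P}}{r}\Bigr)\hat{n}\longrightarrow\hat{n}\qquad(r\to\infty).
\end{align*}

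For \eqref{hyperplane2} I would compute $\hat{r}$ directly from its definition as the oriented radius of the image of $S_r$ under inversion in $\partial B_0(1)$. Using the standard formula that inversion in the unit sphere carries a sphere of center $c$ and radius $\rho$ to one of radius $\rho/\abs{\abs{c}^2-\rho^2}$, here with $c=(\abs{P}+r)\hat{n}$ and $\rho=r$, one finds
\begin{align*}
\abs{c}^2-\rho^2=(\abs{P}+r)^2-r^2=\abs{P}\,(\abs{P}+2r)>0,
\end{align*}
whence $\hat{r}=r/\bigl(\abs{P}(\abs{P}+2r)\bigr)$ and therefore
\begin{align*}
\hat{b}=\frac1{\hat{r}}=\frac{\abs{P}(\abs{P}+2r)}{r}=\frac{\abs{P}^2}{r}+2\abs{P}\longrightarrow 2\abs{P}\qquad(r\to\infty),
\end{align*}
as claimed.

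The only genuinely delicate point — and the step I expect to require the most care — is the orientation bookkeeping for $\hat{r}$, namely confirming that the \emph{oriented} radius carries the correct sign rather than merely its absolute value. This hinges on whether the origin lies inside or outside $S_r$: here $\abs{z}=\abs{P}+r>r$, so the origin is exterior to $S_r$, equivalently $\abs{c}^2-\rho^2>0$, and inversion preserves orientation, making $\hat{r}$ positive. I would also check at the outset that the chosen center (the $+\hat{n}$ side) is indeed the one placing the interior of $S_r$ opposite the origin as the hypothesis demands, since the opposite choice would flip the sign conventions throughout. Once these orientation issues are settled, the result reduces entirely to the two elementary limit computations above.
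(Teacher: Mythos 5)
The paper states \lemref{planefacts} without proof---it proceeds directly to the remark that hyperplanes may be viewed as limits of large spheres---so there is no argument of the authors' to compare yours against; judged on its own, your proof is correct. The parametrization $z=(\abs{P}+r)\hat{n}$, $b=1/r$ gives \eqref{hyperplane1} at once, and your inversion computation gives $\hat{b}=\abs{P}(\abs{P}+2r)/r\to2\abs{P}$. Two small remarks. First, you can bypass the sphere-inversion formula and all of its orientation bookkeeping: by \defref{invcoords} and the defining relation $\ip{v}{v}_Q=-1$ for inversive coordinates, one has $\hat{b}b-\abs{bz}^2=-1$, hence $\hat{b}=\bigl(\abs{z}^2-r^2\bigr)/r=\bigl((\abs{P}+r)^2-r^2\bigr)/r=\abs{P}^2/r+2\abs{P}$, which is the same expression with the sign automatically correct; this is exactly the ``delicate point'' you flag, resolved for free. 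Second, your formula $\hat{r}=r/\bigl(\abs{P}(\abs{P}+2r)\bigr)$ divides by $\abs{P}$, so the degenerate case $P=0$ (a hyperplane through the origin) needs a separate word: there $S_r$ passes through the origin and inverts to a hyperplane rather than a sphere, so $\hat{r}$ is undefined, but then $\hat{b}=\bigl(\abs{z}^2-r^2\bigr)/r=0=2\abs{P}$ for every $r$ and the limit holds trivially. Neither point is a gap of substance; in particular your orientation analysis (origin exterior to $S_r$, hence $\hat{r}>0$) is sound.
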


This enables us to legitimately view hyperplanes as the limits of increasingly large spheres. 

\begin{Def}[Coxeter diagram]\label{Cox}
A \emph{Coxeter diagram} encodes the walls of a Coxeter polyhedron (whose dihedral angles are all of the form $\frac{\pi}{n}$) as nodes in a graph, where we draw between nodes corresponding to walls meeting at dihedral angle $\theta$ if they meet at all   
\[
\begin{cases}
\text{a thick line}, &  \text{if walls are tangent at a point (including $\infty$).} \\
\text{no line}, & \text{if } \theta=\frac{\pi}{2}. \\
\text{a dashed line}, & \text{if walls are disjoint}. \\
n-2 \text{ lines}, & \text{if } \theta=\frac{\pi}{n}. \\
\end{cases}
\]
\end{Def}

By \thmref{structthm}, a Coxeter diagram can be used to visually identify clusters by identifying those vertices that are adjacent to all other vertices exclusively by thick, dashed, or no lines.

\begin{Def}[Gram matrix]\label{Gram}
If $V$ is a rank-$(n+2)$ matrix of inversive coordinates, then its \emph{Gram matrix} is defined as $VQV^T$. 
\end{Def}

The rows and columns of a Gram matrix correspond to walls of a Coxeter polyhedron, where the entries are determined by 
\[
G_{i,j}=\ip{v_i}{v_j}_Q
=
\begin{cases}
-1, &  v_i=v_j. \\
1, & v_i || v_j. \\
0, & v_i \perp v_j. \\
\cos(\theta), & \theta_{v_i,v_j}. \\
\cosh(d), & d=\text{hyperbolic distance}(v_i,v_j).
\end{cases}
\]

A Gram matrix encodes the same information as a Coxeter diagram, but also includes the hyperbolic distance between two disjoint walls. 

\begin{Def}[bend matrix]\label{bend matrix}
For $V$, a rank-$(n+2)$ collection of inversive coordiantes, and $R$, the reflection matrix about a $n$-sphere, a left-acting \emph{bend matrix} $B$ satisfies the equation 
\begin{align}\label{BV}
    BV=VR.
\end{align} 
\end{Def}

Bend matrices can be used to compute the inversive coordinates of a packing. They are a useful tool in proving integrality of packings, as will be demonstrated in \secref{gm}.

We use three sources to generate crystallographic packings, whose details will be elaborated upon in the coming sections. \figref{flow} provides a rough outline for how these packings can be obtained.

\section{General Methods}\label{gm}

\subsection{Producing crystallographic packings} \label{producing cryst packing}
Every cluster $\cC$ identified from one of our three sources above was used to produce a crystallographic packing by applying \thmref{structthm}. 
To do so, all circles in the identified $\cC$ were reflected about circles in $\hat{\cC}$. For each $\cC$, we built an inversive coordinate matrix $V$, which we reflected about all 
$\hat{v} \in \hat{\cC}$ by 
\begin{equation}\label{right acting refl}
  R_{\hat{v}}(V)=VR  
\end{equation}
to obtain the inversive coordinates of the next generation of circles in the packing. To obtain further generations, each new circle produced by (\ref{right acting refl}) was reflected about all $\hat{v} \in \hat{\cC}$ in a similar manner, the infinite repetition of which produces a crystallographic packing.

\smallskip Diagrams of the packings were produced by applying Mathematica's graphics features to the list of inversive coordinates of the packing. 

\subsection{Proving integrality, nonintegrality, and superintegrality} \label{proving int, nonint, supint}
One feature of crystallographic packings to study is the bend of each sphere in the packing.

Finding every integral (and superintegral) crystallographic packing is of fundamental interest, and a main objective of our study. The following lemmata outline our general methods of proving integrality, non-integrality, and superintegrality of crystallographic packings.

\begin{lemma} \label{rescaling}
There is always a transformation which scales the bends of all circles in a packing by some constant.
\end{lemma}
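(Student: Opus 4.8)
The plan is to produce the required transformation as an explicit Euclidean dilation of $\R^n\cup\{\infty\}$ and to verify that it acts on inversive coordinates by a diagonal element of $\mathrm{O}(Q)$. The geometric heart of the matter is simple: dilating all of space by a factor $\lambda>0$ multiplies every radius by $\lambda$, hence every bend $b=1/r$ by $\lambda^{-1}$, uniformly across the entire configuration. The real work is to check that this dilation is a genuine symmetry of the hyperbolic setup, so that the image of a crystallographic packing is again a crystallographic packing and the ``transformation'' is legitimate in the sense of \defref{cryst packing}.

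First I would fix $\lambda>0$ and consider the dilation $D_\lambda\colon x\mapsto\lambda x$ of $\R^n\cup\{\infty\}$. In the upper half-space model this extends to $(x,t)\mapsto(\lambda x,\lambda t)$ on $\H^{n+1}$, which preserves the metric $\tfrac{\abs{dx}^2+dt^2}{t^2}$ and therefore lies in $\isom(\H^{n+1})$. Next I would track its action on inversive coordinates: a sphere centered at $z$ of radius $r$ is sent to the sphere centered at $\lambda z$ of radius $\lambda r$, so $b\mapsto\lambda^{-1}b$ and $bz\mapsto(\lambda^{-1}b)(\lambda z)=bz$. Writing $\hat b=b\abs{z}^2-r$ (which follows from the constraint $\ip{v}{v}_Q=-1$ noted after \defref{invcoords}), one finds $\hat b\mapsto\lambda\hat b$. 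Thus on the coordinate vector $v=(\hat b,b,bz)$ the dilation acts by right multiplication by $M_\lambda=\operatorname{diag}(\lambda,\lambda^{-1},I_n)$, and a direct computation gives $M_\lambda Q M_\lambda^T=Q$, so $M_\lambda\in\mathrm{O}(Q)$ and $M_\lambda$ preserves $\ip{v}{v}_Q=-1$, sending inversive coordinates to inversive coordinates.

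Finally I would conclude: applying $M_\lambda$ to the coordinate vector of every sphere in a packing $\Gamma\cdot\cC$ scales each bend by the single constant $\lambda^{-1}$, and since $M_\lambda$ is an isometry the image is the congruent crystallographic packing $(M_\lambda\Gamma M_\lambda^{-1})\cdot M_\lambda\cC$ generated by the conjugate reflection group. The closest thing to an obstacle is confirming the matrix identity $M_\lambda Q M_\lambda^T=Q$, since it is precisely this that guarantees $M_\lambda$ respects the hyperbolic structure—so that finite covolume and the packing property persist under conjugation by $M_\lambda$. Once that identity is checked, the statement follows, with the scaling constant $\lambda^{-1}$ freely adjustable (a point to be used later when clearing denominators to prove integrality in \secref{proving int, nonint, supint}).
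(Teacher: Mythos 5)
Your proof is correct, and at bottom it produces the same transformation as the paper's: a homothety of $\R^n\cup\{\infty\}$ centered at the origin, which scales every radius (hence every bend) uniformly. The difference is in how that homothety is legitimized. The paper realizes it as the composition of two inversions through concentric circles---the unit circle followed by the circle of radius $\alpha$ centered at the origin---and simply tracks the nearest and farthest points $\abs{z}\mp r$ of each circle to conclude that all radii scale by $\alpha^2$; writing the dilation as a product of inversions makes it immediate that it belongs to the ambient inversive group generated by reflections, so nothing more needs to be said about compatibility with the reflection-group framework of \defref{cryst packing}. You instead take the dilation as given and verify its legitimacy structurally: it is an isometry in the upper half-space model, and it acts on inversive coordinates by the diagonal matrix $M_\lambda=\operatorname{diag}\left(\lambda,\lambda^{-1},I_n\right)$ satisfying $M_\lambda QM_\lambda^T=Q$. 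That costs you an (easy) matrix identity the paper never needs, but it buys an explicit formula for the action on the coordinate vectors $\left(\hat{b},b,bz\right)$, including the co-bend via $\hat{b}=b\abs{z}^2-r$, which is exactly the form in which rescaling is invoked later when clearing denominators in the integrality arguments of \secref{proving int, nonint, supint}. Both arguments are complete; the paper's is more elementary and self-evidently conformal, while yours makes the linear-algebraic structure on $\mathrm{O}(Q)$ explicit.
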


\begin{proof}
Let $d_1=|z|-r$ be the point on a circle $s$ closest to the origin, and let $d_2=|z|+r$ be the point on $s$ furthest from the origin. Inversion through the unit circle sends $d_1 \mapsto \frac{1}{d_1}$ and $d_2 \mapsto \frac{1}{d_2}$. Subsequent inversion through a circle of radius $\alpha$ centered at the origin sends $\frac{1}{d_1} \mapsto \alpha^2 d_1 = \alpha^2(|z|-r)$ and $\frac{1}{d_2} \mapsto \alpha^2{d_2} = \alpha^2(|z|+r)$. Thus, the new circle has radius $\alpha^2 r$ and any circle can be rescaled by choice of $\alpha$.
\end{proof}

As a consequence, if a packing has ``bounded rational" bends---i.e., no bend in the packing has denominator greater than some upper bound---then there is a conformally equivalent integral packing.

\begin{lemma} \label{bendint}
If all bend matrices of a cluster $\cC$ are integral and the bend of each circle in $\cC$ is rational, then the packing generated by $\hat{\cC}$ on $\cC$ is integral. 
\end{lemma}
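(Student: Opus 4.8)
The plan is to track how the bend of a sphere transforms under the reflections generating the packing, and to show that these transformations are governed entirely by the integral bend matrices. Write $V$ for the rank-$(n+2)$ matrix whose rows are the inversive coordinates of the spheres of $\cC$, and recall from Definition~\ref{invcoords} that the bend of a sphere is its second inversive coordinate. Thus the vector $\mathbf{b}$ of bends of $\cC$ is exactly the second column of $V$, and by hypothesis $\mathbf{b}$ is rational.

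First I would record the defining identity of the bend matrices. For each $\hat{v}\in\hat\cC$ with reflection matrix $R_{\hat{v}}$, Definition~\ref{bend matrix} furnishes a bend matrix $B_{\hat{v}}$ with $B_{\hat{v}}V=VR_{\hat{v}}$; such a $B_{\hat{v}}$ exists precisely because $V$ has full rank $n+2$, so every row of $VR_{\hat{v}}$ lies in the row space of $V$ and hence is a linear combination of the rows of $V$. By hypothesis each $B_{\hat{v}}$ is integral.

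The central step is to propagate this identity through an arbitrary word of reflections. A sphere of the packing is obtained from $\cC$ by applying some $\gamma=R_{\hat{v}_1}\cdots R_{\hat{v}_m}$ with each $\hat{v}_i\in\hat\cC$, so its coordinates form a row of $VR_{\hat{v}_1}\cdots R_{\hat{v}_m}$. I would prove by induction on $m$ that
\[
VR_{\hat{v}_1}\cdots R_{\hat{v}_m}=B_{\hat{v}_1}\cdots B_{\hat{v}_m}V,
\]
pulling each reflection matrix to the left by repeatedly substituting $VR_{\hat{v}}=B_{\hat{v}}V$; the point is that we always re-express the reflected cluster back in terms of the original $V$, so the same finite collection of generator bend matrices recurs at every stage. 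Taking second columns, and using that left multiplication commutes with column selection, the bends of this generation are exactly $B_{\hat{v}_1}\cdots B_{\hat{v}_m}\mathbf{b}$.

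Finally I would conclude integrality. Since each $B_{\hat{v}}$ is an integer matrix, every product $B_{\hat{v}_1}\cdots B_{\hat{v}_m}$ is integral, so writing $\mathbf{b}=\tfrac1D\mathbf{m}$ with $D$ a common denominator and $\mathbf{m}$ an integer vector shows that every bend in the packing has denominator dividing $D$. Hence the packing is ``bounded rational,'' and by the consequence of Lemma~\ref{rescaling} there is a conformally equivalent integral packing; equivalently, one may first rescale $\cC$ so that $\mathbf{b}$ is integral and observe that the bends remain integral thereafter. The main obstacle is the inductive identity above: one must be careful that reflecting successive generations is always performed about the \emph{original} spheres of $\hat\cC$, as in the construction of Section~\ref{producing cryst packing}, so that the fixed matrices $R_{\hat{v}}$ — and hence the fixed integral matrices $B_{\hat{v}}$ — reappear unchanged rather than as their conjugates by earlier reflections.
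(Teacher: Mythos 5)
Your proof is correct and follows essentially the same route as the paper's: the paper likewise reduces to integral bends via the rescaling lemma (Lemma \ref{rescaling}) and then observes that repeated application of the relation $BV=VR$ with integral $B$ makes every bend in the orbit an integral linear combination of integers. Your write-up simply makes explicit the induction $VR_{\hat{v}_1}\cdots R_{\hat{v}_m}=B_{\hat{v}_1}\cdots B_{\hat{v}_m}V$ and the extraction of the bend column, which the paper's one-line argument leaves implicit.
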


\begin{proof}
If all bends in $\cC$ are integral, then the action $BV=VR$ is always an integral linear combination of integers, and therefore integral. Otherwise, \lemref{rescaling} allows a rescaling of the bends to integers.
\end{proof}
Note that \lemref{bendint} can hold even if the bend of each circle in $\cC$ is irrational in the case that the bends can be uniformly rescaled by \lemref{rescaling} to achieve integrality.

\begin{lemma} \label{nonintpoly}

Let V be an $m \times (n+2)$ matrix of inversive coordinates corresponding to a cluster $\cC$ of $m$ circles. If there exists a square matrix $g$ satisfying $gV=0$ 
with a nonrational (implying also nonintegral) linear relationship between some two entries in any row, then $\cC$ cannot be integral.
\end{lemma}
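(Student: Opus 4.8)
The plan is to argue by contradiction, exploiting the fact that the linear dependence recorded by $g$ is an \emph{invariant} of the cluster that survives every conformal rescaling. First I would unpack the hypothesis $gV=0$ row by row: writing $v_1,\dots,v_m$ for the rows of $V$ (the inversive coordinates of the circles of $\cC$), the $i$-th row of $g$ encodes a linear dependence $\sum_{j} g_{i,j} v_j = 0$ that holds simultaneously in every one of the $n+2$ coordinate columns. Reading off the bend column alone then yields the scalar relation $\sum_j g_{i,j} b_j = 0$ among the bends $b_j$ of the circles.

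The crux is to show that $g$ does not depend on the chosen representative of the cluster. By \lemref{rescaling} any rescaling of the bends is realized by a composition of circle inversions, and by \defref{reflmatrix} each such inversion acts on inversive coordinates as right multiplication by an invertible reflection matrix. Hence every rescaling sends $V \mapsto VM$ for some invertible $M$, and because $g(VM) = (gV)M$ vanishes if and only if $gV$ does, the relation $g$ is preserved verbatim. Thus $g$---and in particular the irrational ratio occurring between two of its entries---is intrinsic to $\cC$ and cannot be dissolved by conformally rescaling in search of an integral representative.

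With invariance in hand I would finish as follows. Suppose toward a contradiction that $\cC$ is integral; by \lemref{rescaling} choose the representative in which all bends $b_j$ are integers. Fix the row $i$ of $g$ furnishing the hypothesized relation between two entries, say $g_{i,j}$ and $g_{i,k}$ with $g_{i,j}/g_{i,k}$ irrational and the remaining coefficients of that row vanishing. Evaluating $gV=0$ in the bend column gives $g_{i,j} b_j + g_{i,k} b_k = 0$, whence $b_j/b_k = -g_{i,k}/g_{i,j}$ is irrational. But $b_j,b_k \in \mathbb{Z}$ forces $b_j/b_k \in \mathbb{Q}$, a contradiction; hence $\cC$ admits no integral representative.

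The main obstacle, and the step deserving the most care, is the invariance claim, because ``integral'' is only meaningful up to the rescalings of \lemref{rescaling}: one must rule out the possibility that some exotic conformal rescaling clears the irrationality. The resolution is exactly the observation above that all such rescalings act by right multiplication and therefore fix the left kernel of $V$, together with the elementary fact that a common rescaling of all bends preserves their pairwise ratios, so an irrational ratio $b_j/b_k$ can never be removed. A secondary point to pin down is the precise reading of the hypothesis: the argument is cleanest when the distinguished row of $g$ is a genuine two-term relation, so that the bend-column equation isolates exactly the two bends whose ratio is forced to be irrational.
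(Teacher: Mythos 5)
Your invariance observation---that every rescaling supplied by \lemref{rescaling} is a composition of inversions, each acting on inversive coordinates by right multiplication by an invertible matrix, so that the left kernel of $V$ and hence the relation $g$ is an invariant of the cluster---is correct, and it addresses a point that the paper's own one-sentence proof passes over entirely. The paper instead argues through bend matrices: the bends of the packing arise by repeated left multiplication by bend matrices $B$, and the equation $BV=VR$ determines $B$ only up to adding matrices whose rows annihilate $V$, so the irrational structure of the left kernel is claimed to infect every generation of bends. Your route, evaluating $gV=0$ on the bend column directly, is different in mechanism but similar in spirit, and if it worked it would arguably be tighter than the paper's sketch.

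The genuine gap is in your final step, and it is not the ``secondary point'' you call it. You assume the distinguished row of $g$ has exactly two nonzero entries $g_{i,j},g_{i,k}$. First, that hypothesis can never be met by genuine inversive coordinates: $g_{i,j}v_j+g_{i,k}v_k=0$ forces $v_j$ and $v_k$ to be proportional, and since every inversive coordinate vector satisfies $\ip{v}{v}_Q=-1$, the constant of proportionality is $\pm1$, whence $g_{i,j}/g_{i,k}=\mp1$ is rational. So under your reading the lemma is vacuously true and your contradiction is never reached from a satisfiable hypothesis. Second, in the situations where the paper actually invokes the lemma---e.g.\ the $\hat{Bi}(17)$ cluster $\{4,8\}$ computation in \secref{nonint bianchi proof example}---the rows of $g$ have several nonzero entries, and the irrational relation holds between just two of them (there, $g(i,5)=-g(i,2)/\sqrt{2}$, while $g(i,1),g(i,3),g(i,4)$ are also nonzero); the bend-column equation $\sum_j g_{i,j}b_j=0$ then has many terms, and no ratio of two bends can be isolated from it. Closing this gap requires a further idea, because the statement is false as pure linear algebra: the rank-two matrix with rows $(1,0)$, $(0,1)$, $(1,\sqrt{2})$ has left kernel spanned by $(1,\sqrt{2},-1)$, which forces an irrational proportionality between its first two coordinates, yet its first column is integral. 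Any complete proof must therefore exploit the inversive structure beyond the bend column---for instance by showing that an integral packing admits a representative in which \emph{all} entries of $V$ are rational, so that its left kernel is defined over $\mathbb{Q}$ and cannot force an irrational proportionality, or by making rigorous the propagation-through-bend-matrices argument that the paper gestures at.
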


\begin{proof}
A nonintegral relation between the entries of the bend matrices precludes the possibility of an integral packing, since the packing is entirely generated by reflections, namely, multiplication with its bend matrices.
\end{proof}

The following theorem from \cite{KN18} relates superintegrality to arithmeticity as defined by Vinberg's arithmeticity criterion \cite{Vin67}.
\begin{thm} \label{superint}
If a packing is superintegral, then the group $\widetilde\G$ generated by reflections through $\cC$ and $\hat\cC$ is arithmetic.
\end{thm}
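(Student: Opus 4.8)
The plan is to realize this statement as Vinberg's arithmeticity criterion \cite{Vin67} and to deduce that criterion's hypotheses from superintegrality. Writing $G=VQV^T$ for the Gram matrix (\defref{Gram}) of the supercluster $\widetilde\cC$, Vinberg's criterion asserts that the finite-covolume reflection group $\widetilde\Gamma<\isom(\H^{n+1})$ is arithmetic precisely when: (I) every cyclic product $2^r\ip{v_{i_0}}{v_{i_1}}_Q\ip{v_{i_1}}{v_{i_2}}_Q\cdots\ip{v_{i_{r-1}}}{v_{i_0}}_Q$ of Gram entries is an algebraic integer, (II) the field $k$ generated by these cyclic products is totally real, and (III) every nonidentity Galois conjugate of $G$ is positive definite. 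The strategy is to extract (I) directly from superintegrality, and then to obtain (II) and (III) from the fact that $\widetilde\Gamma$ is a lattice.

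The central computation links the bend to the Gram entries. The bend is the linear functional $b(v)=\ip{v}{\mathbf e}_Q$, where $\mathbf e=(2,0,\dots,0)$ is the fixed rational null vector representing $\infty$; one checks that $\ip{v}{\mathbf e}_Q$ returns the coordinate $b$ of $v=(\hat b,b,bz)$. Since each reflection $R_{\hat v}$ (\defref{reflmatrix}) is a $Q$-isometry and an involution, the formula $R_{\hat v}(v)=v+2\ip{v}{\hat v}_Q\hat v$ yields the recursion
\begin{equation*}
b(R_{\hat v}v)=b(v)+2\ip{v}{\hat v}_Q\,b(\hat v).
\end{equation*}
When $b(\hat v)\neq0$, a single application already forces $2\ip{v}{\hat v}_Q\in\mathbb{Q}$ for any integer-bend $v$, so most individual doubled Gram entries are immediately rational; the entries involving bend-zero walls (planes) escape this and are controlled only in combination. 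Iterating the recursion over a word $\gamma=R_{\hat v_{i_1}}\cdots R_{\hat v_{i_r}}$ expands $b(\gamma v)$ as a $\mathbb{Z}$-linear combination of the starting bends whose coefficients are products of the quantities $2\ip{v_{i_a}}{v_{i_b}}_Q$ along reflection paths, and the closed paths contribute exactly the cyclic products in (I). Because superintegrality forces $b(\gamma v)\in\mathbb{Z}$ for every word $\gamma$ and every starting sphere, an elimination argument across sufficiently many words isolates each cyclic product and shows it to be a rational integer, establishing (I).

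For (II) and (III) I would invoke that $\widetilde\Gamma$ is a lattice: being a finite-covolume discrete subgroup of $\isom(\H^{n+1})$, it is Zariski dense by the Borel density theorem, so the $k$-algebraic group it generates must, under restriction of scalars, have $O(1,n+1)$ as its only noncompact archimedean factor; compactness at every other place forces $k$ to be totally real and each conjugate form $\sigma(G)$ to be positive definite, which are (II) and (III). The main obstacle is the elimination step above: integrality of every orbit bend must be leveraged to pin down each \emph{individual} cyclic product as an algebraic integer, all while respecting that the separate Gram entries $2\ip{v_i}{v_j}_Q$ need not themselves be integral near cusps---only the cycles are---which is exactly the combinatorial subtlety Vinberg's criterion is built to handle. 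Finally, the hypothesis that $\widetilde\Gamma$ (not merely a subgroup) has finite covolume is indispensable: the ordinary Apollonian group is integral yet thin, hence nonarithmetic, so it is precisely the finite covolume of the full supergroup that excludes such thin behavior.
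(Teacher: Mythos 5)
Two things up front: this paper never proves the statement at all --- it is imported verbatim from \cite{KN18} (``The following theorem from [KN18]\dots'') and used as a black box --- so your proposal must stand on its own as a reconstruction of the literature's argument. Your frame is reasonable: Vinberg's criterion \cite{Vin67} is indeed the natural tool here, and your identification of the bend as the linear functional $v\mapsto\ip{v}{\mathbf{e}}_Q$ against the rational null vector $\mathbf{e}=(2,0,\dots,0)$, together with the recursion $b(R_{\hat v}v)=b(v)+2\ip{v}{\hat v}_Q\,b(\hat v)$, is the right computational starting point. But the proposal has one genuine gap and one incorrect step.

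The gap is at the heart of the argument. Superintegrality constrains only the bends of spheres in the orbit $\widetilde\Gamma\cdot\cC$; the cocluster walls $\hat v\in\hat\cC$ are mirrors, not members of the superpacking, so nothing forces $b(\hat v)$ to be integral or even rational --- the paper's own $\hat{Bi}(10)$ cocluster coordinates contain walls with bends $\sqrt{2}$, $3\sqrt{10}$, and $4\sqrt{10}$. Hence your claim that one application of the recursion ``already forces $2\ip{v}{\hat v}_Q\in\mathbb{Q}$'' whenever $b(\hat v)\neq0$ is false: the recursion yields only $2\ip{v}{\hat v}_Q\,b(\hat v)\in\mathbb{Z}$, and the obstruction is in no way confined to bend-zero walls. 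This poisons the main step: expanding $b(\gamma v)$ over a word expresses it as a sum of path-products weighted by the unknown, possibly irrational bends of the terminal walls, and cycles lying entirely inside $\hat\cC$ (which Vinberg's criterion also requires you to control) never appear except multiplied by such unknown weights. The ``elimination argument across sufficiently many words'' that is supposed to isolate each individual cyclic product as a rational integer is therefore not a routine afterthought --- it is the entire content of the theorem, it is asserted rather than carried out, and as set up it is not clear it can be carried out at all.

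The incorrect step is the derivation of (II) and (III) from Borel density. Zariski density of a finite-covolume group gives no information about Galois-conjugate places; conditions (II) and (III) are precisely what separates arithmetic from nonarithmetic reflection lattices. Nonarithmetic finite-covolume hyperbolic Coxeter groups exist (nonarithmetic triangle groups in $\H^2$, and higher-dimensional examples of Makarov and Vinberg); they are Zariski dense and satisfy (I) and (II) while failing (III), so your argument would falsely certify them as arithmetic. The circularity is in asserting that the group ``must, under restriction of scalars, have $O(1,n+1)$ as its only noncompact archimedean factor'': the existence of such a structure is what arithmeticity \emph{means}, not a consequence of being a lattice. Fortunately the step is unnecessary: if (I) is proved in the strong form you state --- every cyclic product a rational integer --- then $k=\mathbb{Q}$, which is totally real and has no nonidentity real embedding, so (II) and (III) hold vacuously. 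Alternatively one can bypass cyclic products entirely, in the spirit of \cite{KN18}: since the group acts by $Q$-isometries, superintegrality says every vector in the orbit $\widetilde\Gamma\cdot\mathbf{e}$ pairs integrally with every superpacking vector, and the superpacking spans $\R^{n+2}$; hence the $\mathbb{Z}$-span of $\widetilde\Gamma\cdot\mathbf{e}$ is a full $\widetilde\Gamma$-invariant lattice, so $\widetilde\Gamma$ sits inside the integer points of a $\mathbb{Q}$-form of $O(Q)$, and Borel--Harish-Chandra plus finite covolume gives finite index, hence arithmeticity. Finally, a terminological correction: the Apollonian group is thin (infinite covolume), so lattice arithmeticity and Vinberg's criterion simply do not apply to it; finite covolume enters as a hypothesis of the criterion, not as a device for ``excluding thin behavior.''
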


\section{Polyhedral Packings}\label{polyhedral packings}
A version of the Koebe-Andreev-Thurston theorem allows polyhedra (equivalently, 3-connected planar graphs) to be realized as circle packings.
\begin{thm}[Koebe-Andreev-Thurston Circle Packing Theorem]
Every 3-connected planar graph can be realized as a polyhedron with a midsphere, and this realization is unique up to conformal equivalence.
\end{thm}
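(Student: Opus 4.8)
The plan is to prove this in two stages: first reduce the midsphere realization to the existence and uniqueness of a suitable circle packing on the round sphere $S^2$, and then establish that packing by a variational/rigidity argument. The reduction exploits the geometry of a midsphere directly. Suppose $P$ is a convex polytope realizing $G$ whose every edge is tangent to a fixed sphere $S$. For each face $F$ the supporting plane meets $S$ in a circle $C_F$, and the tangency points of the edges bounding $F$ all lie on $C_F$; dually, for each vertex $v$ the tangent lines from $v$ to $S$ touch $S$ along a circle $C_v$. An edge $e=(u,v)$ shared by faces $F,F'$ has a single tangency point $p_e$ lying on all four of $C_u,C_v,C_F,C_{F'}$; since the line through $e$ is tangent to $S$ at $p_e$ and lies in both face planes, $C_F$ and $C_{F'}$ are tangent at $p_e$, as are $C_u$ and $C_v$, while each $C_v$ is orthogonal to every incident $C_F$. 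Thus a midsphere realization of $G$ is the same datum as a pair of mutually orthogonal circle packings on $S^2$ whose tangency graphs are $G$ and its planar dual $G^*$, with primal and dual tangencies occurring at common points. Because $G$ is $3$-connected and planar, $G^*$ is well defined (by Whitney's unique-embedding theorem), so the dual packing is unambiguous.

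To produce the packing I would reduce, via the incidence structure of $G$ and $G^*$, to finding a circle packing of a triangulation $T$ of $S^2$ (a barycentric-type subdivision whose triangles are the vertex--edge--face flags), which is the content of the Andreev--Thurston theorem for triangulations. I would assign a candidate radius $r_w>0$ to each vertex $w$ of $T$ and, using the law of cosines in the appropriate constant-curvature geometry, compute the total cone angle $\Theta_w(\mathbf r)$ accumulated around $w$ from its incident triangles. A valid packing is exactly a solution of the curvature equations $\Theta_w(\mathbf r)=2\pi$ for every $w$. The analytic heart is that the angle map $\mathbf r\mapsto(\Theta_w(\mathbf r))_w$ is smooth with a sign-definite Jacobian, reflecting the monotone dependence of each angle on the radii; equivalently, the equations are the critical-point equations of a strictly convex functional in the Colin de Verdi\`ere sense.

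The hard part is precisely this existence-and-rigidity step. Existence does not follow from local monotonicity alone: one must show the angle map is proper with the correct topological degree, controlling the boundary behavior of the radius configuration so that no circle collapses or escapes to a point. This is exactly where $3$-connectivity enters, ruling out the splittings and degenerations that a graph with a separating pair of vertices would permit. I would carry this out through Thurston's extension of Andreev's hyperbolic realization theorem together with a degree argument (or, equivalently, by minimizing the convex functional, whose strict convexity forces a unique interior minimizer).

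Rigidity, and hence uniqueness up to conformal equivalence, then follows from the same sign-definiteness of the Jacobian: any two packings with the identical tangency pattern must coincide after a M\"obius transformation of $S^2$, and a M\"obius transformation of the midsphere is precisely a conformal equivalence of the realized polyhedra. I would close by noting that the conformal-equivalence clause is sharp, since the M\"obius group acts on midsphere realizations with genuine positive-dimensional freedom, so uniqueness cannot be strengthened to rigidity on the nose.
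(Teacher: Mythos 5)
You should know at the outset that the paper contains no proof of this statement to compare against: it is the classical Koebe--Andreev--Thurston theorem, which the paper states as a named black box so that polyhedra (equivalently 3-connected planar graphs) can be fed into the Structure Theorem, with the surrounding circle-packing literature (\cite{BS04}, \cite{Ver91}, \cite{Riv86}, \cite{Riv94}, \cite{Zie04}) cited only for the computational techniques used to realize packings. So your proposal can only be judged on its own merits, and against the published proofs it implicitly follows.

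On those terms, the first half of your argument is correct and worth having: the translation of a midsphere realization into a primal--dual pair of circle packings (face circles tangent at edge-tangency points, vertex circles tangent at the same points, vertex circles orthogonal to incident face circles, Whitney's theorem making the dual unambiguous) is the right dictionary, and it is exactly the reduction used by Brightwell--Scheinerman and in the variational treatment of \cite{BS04}. The second half, however, is a plan rather than a proof. The existence-and-rigidity step --- which you yourself flag as the hard part --- is discharged by appeal to ``Thurston's extension of Andreev's theorem together with a degree argument'' or to the minimization of a strictly convex functional; but these invoked results are of essentially the same depth as the theorem being proved, and the details you would need to supply are precisely the ones omitted: (i) that the flag-triangulation packing you posit actually produces the orthogonality and common tangency points of the primal and dual families rather than merely two separate packings; (ii) the properness/boundary analysis of the angle map, where 3-connectivity must be used concretely to exclude degenerating radius configurations, not just asserted to do so; and (iii) the passage from M\"obius rigidity on $S^2$ to the ``unique up to conformal equivalence'' clause for the polyhedron, which requires identifying M\"obius transformations of the midsphere with the induced projective transformations of the realization. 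As an exposition of the known proof strategy your outline is sound; as a self-contained proof it has its core subcontracted to the very literature it is meant to replace.
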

Here, a \textit{midsphere} is a sphere tangent to every edge of a polyhedron $\Pi$. Its dual polyhedron $\hat\Pi$ has the same midsphere. A realization of $\Pi$, $\hat\Pi$, and their midsphere gives rise to two clusters of circles (see \figref{midsphere}) which pass through edge tangency points and have normal vectors along the rays connecting vertices (of both $\Pi$ and $\hat\Pi$) to the center of the midsphere.

Stereographic projection of these circles onto $\R^2 \cup \{\infty\}$ yields a collection of circles which by \thmref{structthm} can be viewed as a cluster-cocluster pair $\cC$, $\hat\cC$ giving rise to a circle packing: call circles in $\cC$ those centered around vertices of $\Pi$ and circles in $\hat\cC$ those centered around vertices of $\hat\Pi$. Any two circles in $\cC$ are either tangent or disjoint, and every circle in $\hat\cC$ is only tangent, disjoint, or orthogonal to circles in $\cC$. The packing produced by a polyhedron $\Pi$ is called $\mathscr{P}$; similarly a superpacking is called $\nye{\mathscr{P}}$.

\begin{figure}[p] 
\centering
\begin{minipage}[b]{0.49\linewidth}
 \includegraphics[width=1\textwidth]{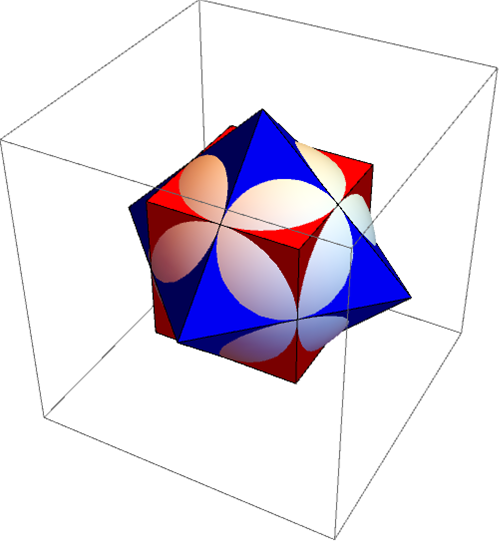}
 \caption{Octahedron (blue), its dual (red cube), and midsphere (grey)}
 \label{midsphere}
\end{minipage}
\begin{minipage}[b]{0.49\linewidth}
 \includegraphics[width=1\textwidth]{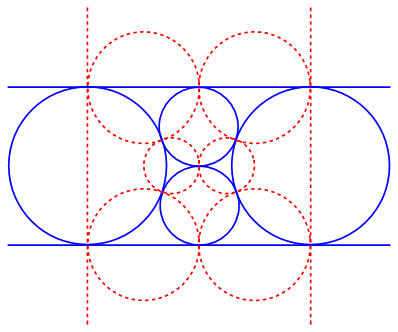}
 \caption{Stereographic projection onto $\R^2$}
\end{minipage}
\includegraphics[width=1\textwidth]{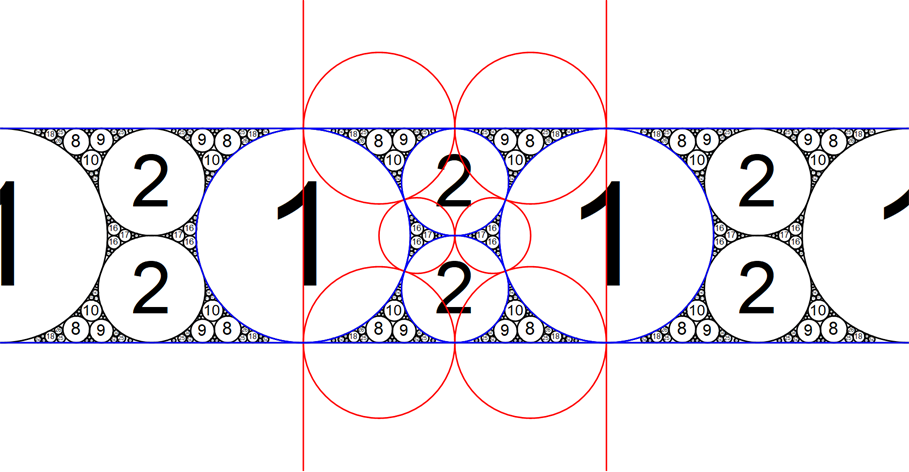}
\caption{Packing with bends.}

\end{figure}

Previous work has classified certain types of polyhedra, for example \textit{uniform polyhedra}: those whose faces are regular polygons and which are vertex-transitive. From Kontorovich and Nakamura we know the following theorem.

\begin{thm} \label{uniform}
    The only integral uniform polyhedra are:
    \begin{itemize}
        \item (Platonic) tetrahedron, octahedron, cube;
        \item (Archimedean) cuboctahedron, truncated tetrahedron, truncated octahedron;
        \item (prisms/antiprisms) 3,4,6-prisms, 3-antiprism.
    \end{itemize}
\end{thm}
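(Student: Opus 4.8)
The plan is to enumerate the uniform polyhedra and, for each, to build the associated circle packing and decide its integrality by means of the computational criteria \lemref{bendint} and \lemref{nonintpoly}. These polyhedra comprise the five Platonic solids, the thirteen Archimedean solids, and the two infinite families of $n$-gonal prisms and $n$-gonal antiprisms. The first two collections being finite, I would handle them by direct computation, reserving a single uniform argument for the two infinite families, which carry the real content of the theorem.

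For a fixed polyhedron $\Pi$ I would realize it with its dual and midsphere, project stereographically, and extract the cluster-cocluster pair $(\cC,\hat\cC)$ exactly as in \secref{polyhedral packings}. Forming the inversive-coordinate matrix $V$ and the reflection matrices $R_{\hat v}$ for $\hat v\in\hat\cC$, I would compute the bend matrices $B$ defined by $BV=VR$. For each polyhedron in the asserted list I expect every $B$ to be integral and the seed bends to be rational after the rescaling of \lemref{rescaling}, whence \lemref{bendint} gives integrality. For the remaining Platonic and Archimedean solids I would instead exhibit a square matrix $g$ with $gV=0$ two of whose entries, in a common row, stand in an irrational ratio, so that \lemref{nonintpoly} forbids integrality. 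This irrationality is transparent for the solids with $5$-fold symmetry (the dodecahedron, icosahedron, icosidodecahedron and their relatives), where $\sqrt5$ enters through the golden ratio, and for those with $8$-fold symmetry (the truncated cube and truncated cuboctahedron), where $\sqrt2$ enters; the few solids whose face angles are all rational yet which still fail, such as the rhombicuboctahedron and the snub cube, are dispatched by the same explicit computation, the obstruction now surfacing in the bends themselves rather than in a face angle.

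The crux is the two infinite families. For the $n$-prism (two regular $n$-gons joined by squares) and the $n$-antiprism (two regular $n$-gons joined by triangles), the radii and tangency data of the projected circles---hence the off-diagonal entries of the Gram matrix $VQV^T$ and the coefficients of any dependency $gV=0$---are governed by a trigonometric function of the central angle $2\pi/n$ of the $n$-gonal faces, together with, in the antiprism case, the half-angle $\pi/n$ induced by the rotational offset between its two $n$-gons. The key point is that integrality forces a quantity of the form $\cos(2\pi/n)$ (respectively $\cos(\pi/n)$ for the antiprism) to be rational, for otherwise \lemref{nonintpoly} applies. By Niven's theorem the only rational values of $\cos(q\pi)$ with $q$ rational are $0,\pm\tfrac12,\pm1$; thus $\cos(2\pi/n)$ is rational precisely for $n\in\{1,2,3,4,6\}$ and $\cos(\pi/n)$ precisely for $n\in\{1,2,3\}$. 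The constraint $n\ge3$ then leaves the prisms $n\in\{3,4,6\}$ and the single antiprism $n=3$, and I would close by checking directly, once more through \lemref{bendint}, that each survivor is genuinely integral.

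The main obstacle I anticipate is extracting the rationality constraint cleanly for the infinite families: one must verify that the irrationality of $\cos(2\pi/n)$ genuinely propagates into an irrational ratio between two coefficients of a dependency $gV=0$---equivalently, into an irrational ratio of two bends within a common generation---rather than being absorbed by the overall scaling freedom of \lemref{rescaling}. Pinning down that the offending irrationality survives as a ratio, and not merely as a global scale, is the technical heart of the argument; once this is secured, Niven's theorem converts it at once into the finite list above.
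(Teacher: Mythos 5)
First, a point of comparison: the paper never proves this theorem. It is stated as imported background (``From Kontorovich and Nakamura we know the following theorem''), with the proof residing in \cite{KN18}. So there is no internal argument to measure your proposal against; the relevant benchmark is \cite{KN18} itself, and your outline is in substance a reconstruction of that argument: a finite case-by-case check of the Platonic and Archimedean solids via the midsphere realization, stereographic projection, and the criteria \lemref{bendint} and \lemref{nonintpoly}, plus a single rationality argument for the two infinite families, with Niven's theorem converting irrationality of $\cos(2\pi/n)$ (prisms) and $\cos(\pi/n)$ (antiprisms) into the lists $n\in\{3,4,6\}$ and $n=3$. The decomposition and the numerology are both correct, including the non-obvious point that the prism obstruction must depend on $2\pi/n$ while the antiprism obstruction must see the half-twist $\pi/n$; any other assignment would misclassify the $6$-prism or the $4$-antiprism.

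The one genuine gap is the step you flag at the end, and it splits into an easy half and a laborious half. The easy half is the rescaling worry, which is not actually a threat: \lemref{rescaling} rescales every bend in the packing by one common factor $\alpha^2$, so if two circles anywhere in the orbit have bends whose ratio is irrational, no rescaling can make both integral, and the packing is non-integral outright. Hence the infinite families reduce to exhibiting, for each $n$ outside the stated lists, a single pair of circles in the packing whose bend ratio has the form $a+b\cos(2\pi/n)$ with $a,b\in\mathbb{Q}$ and $b\neq 0$ (respectively with $\cos(\pi/n)$ for antiprisms). The laborious half is that this pair must be produced uniformly in $n$: you need the explicit inversive coordinates of the projected midsphere configuration of the $n$-prism and $n$-antiprism as trigonometric functions of $\pi/n$, then one or two applications of the bend matrices, and a verification that the coefficient $b$ does not vanish identically in $n$ --- irrationality of $\cos(2\pi/n)$ alone does not preclude cancellation in any particular ratio. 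Until that computation is carried out, the infinite-family argument is a plan rather than a proof; once it is, your route is sound and coincides with that of \cite{KN18}.
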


\subsection{Methods}
We were able to systematically generate polyhedron raw data using the program \texttt{plantri}. Mathematica programs turned data into packings using techniques outlined in \cite{BS04} (see also \cite{Zie04,Riv86,Riv94,Ver91}). Currently all polyhedra are documented on vertices $n \leq 7$, with some additional larger regular polyhedra. 

We identify 2 broad categories of polyhedra which branch into 4 total smaller subcategories: integral-superintegral, integral but not superintegral, nonintegral-rational, and nonintegral-nonrational. To more accurately define the relationships between polyhedra, we introduce a \textit{gluing operation}.

\begin{Def}[gluing operation]\label{gluingop}
Polyhedra can be glued along faces or vertices. Let $A$ be a polyhedron with vertex set $V_A$, edges $E_A$, and faces $F_A$. Similarly let $B=\{V_B,E_B,F_B\}$. A face-face gluing operation is only valid if two $n$-gon faces are equivalent: the same types of faces, in the same order, are adjacent to both. A vertex-vertex gluing operation is only valid if two vertices of degree $n$ are equivalent: they lie on the same type of faces, in the same order. 
\begin{itemize}
    \item To glue faces $f_a \in A$ and $f_b \in B$ : let $f_a, f_b$ be $n$-gons bounded by vertices $\{v_{a_1}, \dots ,v_{a_n}\}, \{v_{b_1}, \dots ,v_{b_n}\}$ and edges $\{e_{a_1}, \dots ,e_{a_n}\} , \{e_{b_1}, \dots ,e_{b_n}\}$. Vertices and edges must be glued together in a one-to-one mapping with stretching distortions only in the plane of faces $f_a,f_b$, which are ommitted from the final polyhedron.
    \item To glue two vertices of equal degree $n$: let $v_a \in A$ and $v_b \in B$ have edges $\{e_{a_1}, \dots ,e_{a_n}\}$ and $\{e_{b_1}, \dots ,e_{b_n}\}$. Edges are joined in a one-to-one mapping creating new faces bounded by preexisting edges from $A$ and $B$ such that new face $m$ is bounded by the union of all edges on faces $f_{a_m} \in A$ and $f_{b_m} \in B$, and dropping both $v_a$ and $v_b$ in the final polyhedron.
\end{itemize}
\end{Def}

\begin{figure*}[ht]
\centering
\begin{minipage}[b]{0.24\linewidth}
 \includegraphics[width=1\textwidth]{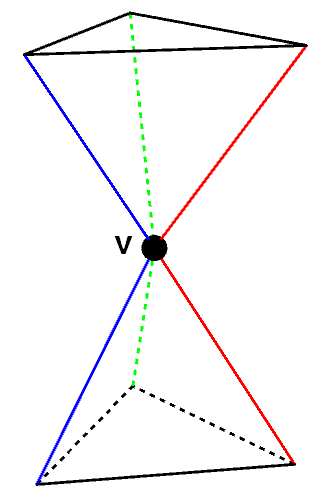}
 \caption*{(a)}
\end{minipage}
\begin{minipage}[b]{0.24\linewidth}
 \includegraphics[width=1\textwidth]{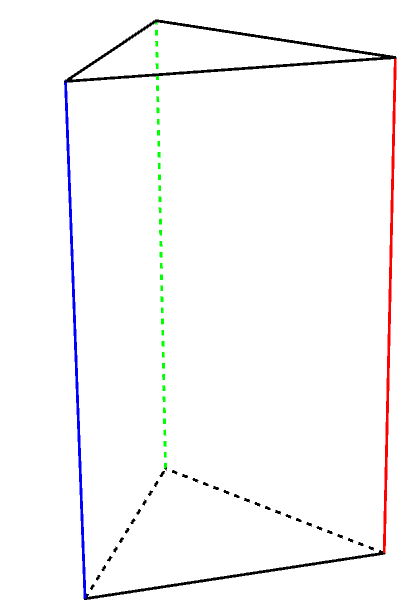}
  \caption*{(b)}
\end{minipage}
\begin{minipage}[b]{0.24\linewidth}
 \includegraphics[width=1\textwidth]{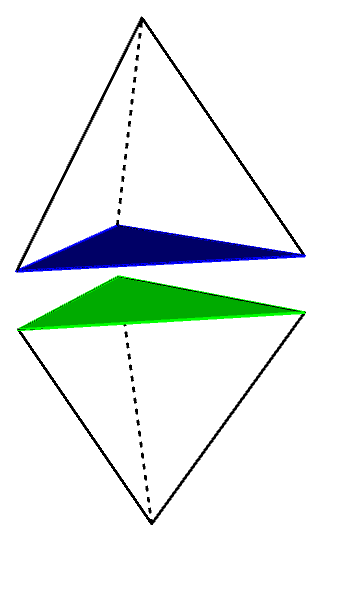}
  \caption*{(c)}
\end{minipage}
\begin{minipage}[b]{0.24\linewidth}
 \includegraphics[width=1\textwidth]{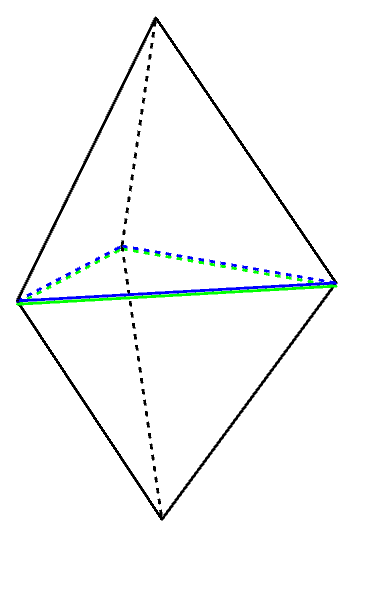}
  \caption*{(d)}
\end{minipage}
\caption{Gluing two tetrahedra at vertex {\fontfamily{phv}\selectfont v} (a) to produce a triangular prism (b), gluing two tetrahedra along the blue and green faces (c) to produce triangular bipyramid (d)}
\end{figure*}

\begin{figure*}[h!]
\centering
\begin{minipage}[b]{0.49\linewidth}
 \includegraphics[width=1\textwidth]{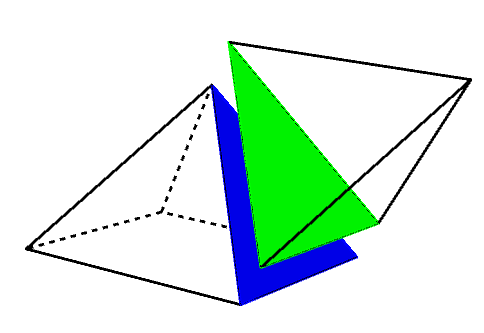}
 \caption*{(a)}
\end{minipage}
\begin{minipage}[b]{0.49\linewidth}
 \includegraphics[width=1\textwidth]{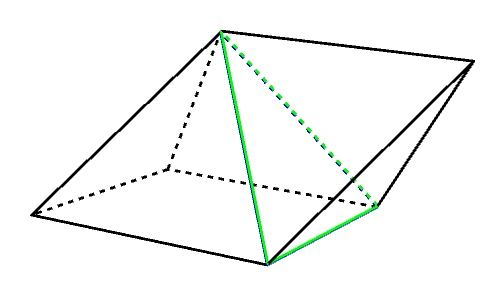}
  \caption*{(b)}
\end{minipage}
\begin{minipage}[b]{1\linewidth}
 \includegraphics[width=1\textwidth]{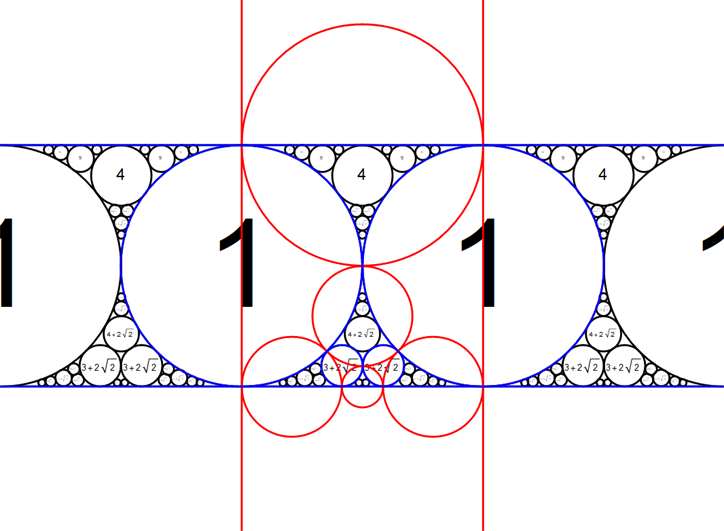}
  \caption*{(c)}
\end{minipage}
\caption{(a) Gluing a triangular face of a square pyramid (blue) to a tetrahedron (green) produces (b) whose packing, (c), is not integral as this is not a valid gluing.}
\end{figure*}

\subsection{Integral Polyhedra}\label{int poly}
A polyhedron is called \textit{integral} if it has some associated integral packing.

\begin{thm} \label{intpoly}
There are exactly 4 unique, nondecomposable (not the result of some series of gluing operations) integral polyhedra with $n \leq 7$ vertices: tetrahedron, square pyramid, hexagonal pyramid, and unnamed 6v7f\_2. We call them \textit{seed polyhedra}, as in \cite{KN18}.
\end{thm}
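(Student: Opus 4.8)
The plan is to make this an exhaustive classification over the finite list of combinatorial polyhedra on at most seven vertices, using the integrality criteria of \secref{gm} to test each candidate and the gluing operation of \defref{gluingop} to detect decomposability. First I would enumerate, using \texttt{plantri}, every $3$-connected planar graph on $n \le 7$ vertices; by the Koebe-Andreev-Thurston theorem each realizes uniquely up to conformal equivalence as a polyhedron with a midsphere, so via \thmref{structthm} it determines a cluster-cocluster pair $\cC,\hat\cC$ and hence a packing $\mathscr P$. This list is finite and explicit, and the four named polyhedra occur one at each of four, five, six, and seven vertices.

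Next, for each polyhedron I would build the inversive-coordinate matrix $V$ of its cluster and compute the bend matrices $B$ satisfying $BV=VR$ for the cocluster reflections. Integrality is then decided by the lemmata: if the bend matrices are integral and the cluster bends rational, \lemref{bendint} (together with \lemref{rescaling} when a uniform rescaling is needed) certifies that $\mathscr P$ is integral; if instead some $g$ with $gV=0$ forces a nonrational relation between two entries of a row, \lemref{nonintpoly} certifies nonintegrality. Because a packing is generated entirely by the reflections encoded in its bend matrices, these tests are exhaustive and sort every candidate into integral or nonintegral.

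Finally, among the integral polyhedra I would test nondecomposability. A polyhedron is decomposable when its combinatorial type is realized by a valid face-face or vertex-vertex gluing of two strictly smaller polyhedra, matching the $n$-gon faces or degree-$n$ vertices required by \defref{gluingop}. For each of the four named polyhedra I would check that no such gluing of smaller polyhedra reproduces it, and for every remaining integral polyhedron on $n \le 7$ vertices I would exhibit an explicit gluing decomposition. The polyhedra that are integral yet admit no decomposition are precisely the seeds, which I expect to confirm are exactly the tetrahedron, square pyramid, hexagonal pyramid, and 6v7f\_2.

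The main obstacle is the completeness of the negative direction: one must verify for every integral polyhedron other than the four that a genuine gluing decomposition exists, while also confirming that the four truly resist any gluing---for instance, distinguishing the square pyramid from the triangular bipyramid, which is itself a face-face gluing of two tetrahedra. A second delicate point is the integrality test: \lemref{rescaling} permits inversive coordinates with nonrational entries to rescale to integer bends, so a naive nonrationality check could misclassify a genuinely integral polyhedron, and one must apply \lemref{nonintpoly} only to relations that survive every choice of scale.
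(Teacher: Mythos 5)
Your overall strategy coincides with the paper's: enumerate all polyhedra on $n\le7$ vertices with \texttt{plantri}, decide integrality for each, and then sort the integral ones by whether they arise from gluings (for that last step the paper's concrete tool is the counting relations of \lemref{gluingaddition}, which is what your ``check that no gluing of smaller polyhedra reproduces it'' step would reduce to in practice).

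However, there is a genuine gap in your integrality test: the dichotomy you propose --- \lemref{bendint} (integral bend matrices and rational cluster bends $\Rightarrow$ integral packing) versus \lemref{nonintpoly} (nonrational relation forced by $gV=0$ $\Rightarrow$ nonintegral) --- is not exhaustive, contrary to your claim that ``these tests are exhaustive and sort every candidate.'' The case it misses is a cluster whose bend matrices are rational but not integral, and this middle case actually occurs on both sides of the classification. On the nonintegral side, the polyhedra 7v8f\_9 and 7v9f\_8 have exclusively rational bends and admit no nonrational relation, so \lemref{nonintpoly} cannot touch them; the paper disposes of them with \lemref{qnonintpoly}, which exhibits entries of products of bend matrices whose denominators grow without bound as the product lengthens, so that no single rescaling via \lemref{rescaling} can ever clear them. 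Your proposal never invokes \lemref{qnonintpoly} or any substitute for it, and your closing caveat about applying \lemref{nonintpoly} ``only to relations that survive every choice of scale'' does not repair this, since for these two polyhedra there is no nonrational relation at all. Worse, on the integral side the same middle case contains two of the four seeds in the very statement you are proving: the hexagonal pyramid and 6v7f\_2 have rational, nonintegral bend matrices, so \lemref{bendint} does not apply to them; the paper must certify their integrality separately, by conjugating the bend matrices and verifying by inspection that the fractional parts produced under multiplication can always be cleared by rescaling. As written, your procedure would leave the integrality of half of the claimed seed polyhedra unproved and would fail to exclude the two rational-nonintegral polyhedra, and both failures strike directly at the exactness (``exactly 4'') of the classification.
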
 

\begin{figure}[H]
\centering
\begin{minipage}[b]{0.25\linewidth}
 \includegraphics[width=1\textwidth]{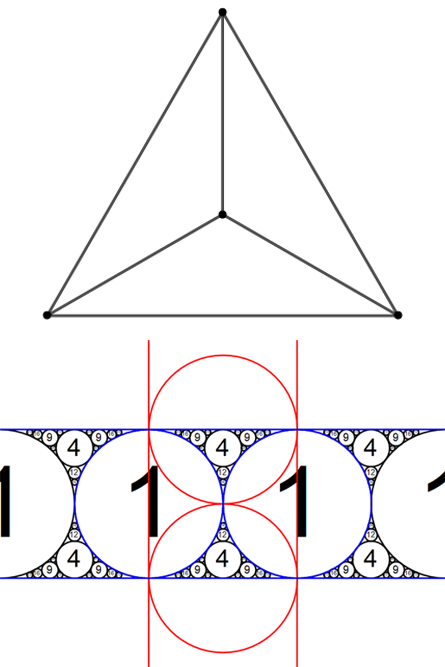}
 \caption*{(a)}
\end{minipage}
\begin{minipage}[b]{0.22\linewidth}
 \includegraphics[width=1\textwidth]{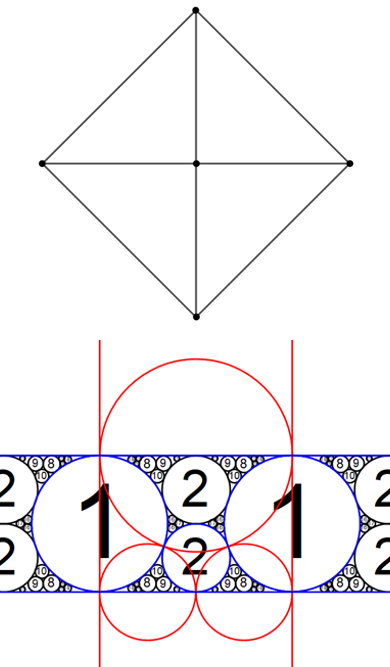}
  \caption*{(b)}
\end{minipage}
\begin{minipage}[b]{0.26\linewidth}
 \includegraphics[width=1\textwidth]{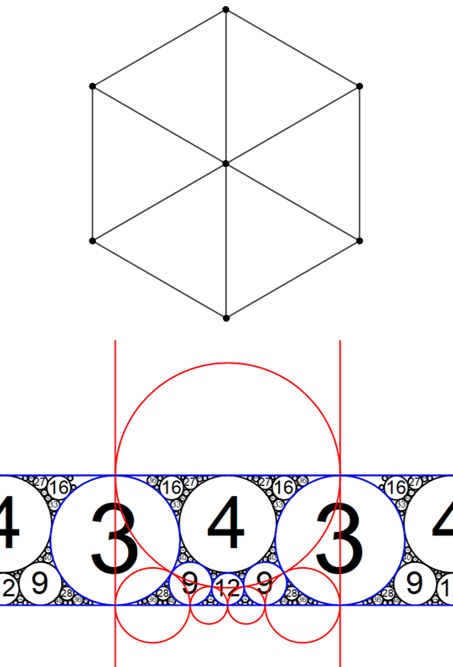}
  \caption*{(c)}
\end{minipage}
\begin{minipage}[b]{0.22\linewidth}
 \includegraphics[width=1\textwidth]{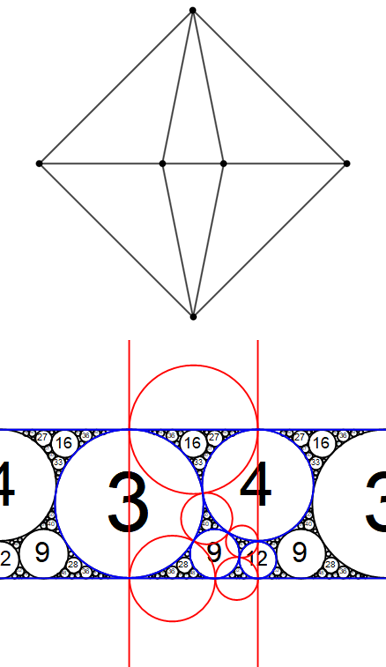}
  \caption*{(d)}
\end{minipage}
\caption{(a) Tetrahedron, (b) square pyramid, (c) hexagonal pyramid, and (d) 6v7f\_2}
\end{figure}

This proof of this theorem relies on the following lemma.
\begin{lemma} \label{gluingaddition}
A gluing operation of $A$ onto $B$ yields a polyhedron with strictly more vertices, edges, and faces than either $A$ or $B$. In particular,
\begin{itemize}
    \item gluing $A$ and $B$ along an $n$-gon face yields polyhedron $C$ such that $|V_C|=|V_A| + |V_B| - n$, $|E_C| = |E_A| + |E_B| - n$, $|F_C| = |F_A| + |F_B| - 2$, and
    \item gluing $A$ and $B$ at a vertex of degree $n$ yields polyhedron $C$ such that $|V_C| = |V_A| + |V_B| - 2$, $|E_C| = |E_A| + |E_B| - n$, $|F_C| = |F_A| + |F_B| - n$.
\end{itemize}
\end{lemma}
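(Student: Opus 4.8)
The plan is to verify the two bulleted formulas directly by tracking which cells (vertices, edges, faces) from $A$ and $B$ survive, merge, or vanish under each gluing operation as defined in \defref{gluingop}, and then to confirm the resulting counts are strictly larger than those of either summand. This is fundamentally a bookkeeping argument, so I would organize it by handling the face-face gluing and the vertex-vertex gluing as two separate cases.

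First, for the \emph{face-face} gluing of two $n$-gon faces $f_a$ and $f_b$: the definition stipulates a one-to-one identification of the $n$ bounding vertices of $f_a$ with those of $f_b$, likewise their $n$ bounding edges, and that the two glued faces themselves are omitted. So I would count: the $n$ pairs of identified vertices collapse to $n$ vertices, meaning we lose $n$ from the naive total $|V_A|+|V_B|$, giving $|V_C|=|V_A|+|V_B|-n$; identically the $n$ identified edge-pairs give $|E_C|=|E_A|+|E_B|-n$; and exactly the two glued faces disappear with nothing new created, giving $|F_C|=|F_A|+|F_B|-2$. Second, for the \emph{vertex-vertex} gluing of two degree-$n$ vertices $v_a,v_b$: here the two apex vertices are both dropped, so $|V_C|=|V_A|+|V_B|-2$; the $n$ edges at $v_a$ are joined one-to-one with the $n$ edges at $v_b$, collapsing $n$ edge-pairs and yielding $|E_C|=|E_A|+|E_B|-n$; and the $n$ faces meeting at $v_a$ fuse with the $n$ faces meeting at $v_b$ into $n$ new faces (each the union of one $f_{a_m}$ with $f_{b_m}$), so $n$ faces are lost and $|F_C|=|F_A|+|F_B|-n$.

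With the formulas established, the strict-growth claim follows by observing that every count subtracts at most $\max(n,2)$ from a sum of two positive polyhedral totals, and any polyhedron has at least four vertices, four faces, and six edges (the tetrahedron being minimal). For instance, in the face-face case $|V_C|-|V_A|=|V_B|-n$, and since $B$ is a polyhedron containing an $n$-gon face it must have strictly more than $n$ vertices, so $|V_B|-n\ge 1>0$; the analogous slack for edges and faces is checked the same way, and symmetrically against $A$. The vertex-vertex case is handled identically, using that a polyhedron with a degree-$n$ vertex has more than $n$ edges and more than $n$ faces.

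I do not anticipate a genuine obstacle here, as the argument is combinatorial accounting rather than deep geometry; the one point requiring care is justifying that no \emph{unexpected} cells are created or destroyed beyond those named—in particular, that the identified boundary edges and vertices in the face-face case are not themselves further collapsed among the non-glued structure, and that the fused faces in the vertex-vertex case are genuinely distinct and planar-realizable. I would address this by appealing to the validity conditions in \defref{gluingop} (matching face-types and vertex-configurations in the same cyclic order), which guarantee the one-to-one identifications are consistent and introduce no additional coincidences.
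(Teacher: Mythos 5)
Your proposal is correct, but there is nothing in the paper to compare it against: the paper states Lemma~\ref{gluingaddition} without proof, treating the counting formulas as immediate from Definition~\ref{gluingop}, and proceeds directly to the proof of Theorem~\ref{intpoly} where the formulas are applied (e.g.\ the gluings of two tetrahedra, which your formulas reproduce: $|V|=5$, $|E|=9$, $|F|=6$ along a face and $|V|=6$, $|E|=9$, $|F|=5$ at a vertex). Your bookkeeping matches exactly the identifications prescribed by the definition, and a quick sanity check is that both of your formulas preserve the Euler characteristic, $|V_C|-|E_C|+|F_C|=2+2-2=2$. The one place where your write-up goes beyond pure accounting is the strictness claim, and there you correctly isolate the needed facts but leave them asserted: that a polyhedron containing an $n$-gon face has strictly more than $n$ vertices and edges, and that one containing a degree-$n$ vertex has strictly more than $n$ edges and faces. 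These deserve a one-line justification via $3$-connectivity: every vertex of a polyhedron has degree at least $3$, so each vertex of the glued $n$-gon carries an edge off that face (giving $|E_B|>n$), and not all vertices can lie on a single face since an outerplanar graph is never $3$-connected (giving $|V_B|>n$); the face counts follow by applying the same argument to the dual polyhedron, which is again $3$-connected and planar. With those lines added, your argument is a complete proof of the statement the paper leaves as an exercise.
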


\begin{proof}[Proof of \thmref{intpoly}]
Aside from the polyhedra specifically mentioned in \thmref{intpoly}, only 9 of size $n \leq 7$ vertices are integral. Methods described in \lemref{nonintpoly} and \lemref{qnonintpoly} are used to show that all others are not integral; the series of gluings used to construct the 9 others are detailed in \secref{integralpoly}. What remains is to show that the tetrahedron, square pyramid, hexagonal pyramid, and 6v7f\_2 cannot be constructed from a series of gluing operations.
\begin{itemize}
    \item A tetrahedron is the smallest possible polyhedron; by \lemref{gluingaddition} it cannot be the result of gluings.
    \item The square pyramid is only larger than a tetrahedron, but gluing two tetrahedra yields (along a face) $|V| = 5$,  $|E| = 9$,  $|F| = 6$ or (along a vertex) $|V| = 6$, $|E| = 9$, $|F| = 5$; a square pyramid has 8 edges.
    \item 6v7f\_2 does not arise from gluing two tetrahedra (above). Gluing two square pyramids yields either $|V| = 8$ or $|F| = 8$; gluing a tetrahedron to a square pyramid (by symmetry) has only two possibilities, one with $|V| = 7$, one shown in [insert figure]. 
    \item Hexagonal pyramid is not the product of any gluings described above; the addition of 6v7f\_2 to possible generators cannot contribute to its construction because they share the same number of faces.
\end{itemize}
\end{proof}

We observe that both the hexagonal pyramid and 6v7f\_2 are both half of the hexagonal bipyramid, sliced in two different ways; as such, they belong to the same commensurability class. 

We can further distinguish integral polyhedra by studying the stronger condition of superintegrality.

\subsubsection{Integral-Superintegral Polyhedra}
Of the four known seed polyhedra, two are also superintegral. The tetrahedron and square pyramid (as well as the other documented superintegral polyhedra) can be proved superintegral by an extension of \lemref{bendint}.

Aside from the polyhedra that have been documented, a theorem from \cite{KN18} guarantees the existence of additional superintegral polyhedra.

\begin{thm} \label{supersubset}
    Let $A$ be a superintegral polyhedron. If $A'$ is obtained by performing valid gluing operations on $A$, then 
    \begin{center}
        $\widetilde\sP(A') \subset \widetilde\sP(A)$
    \end{center}
\end{thm}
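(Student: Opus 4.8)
The plan is to prove the set containment directly, by showing that both the group and the cluster defining $\widetilde\sP(A')$ sit inside the single $\widetilde\Gamma_A$-orbit $\widetilde\sP(A)=\widetilde\Gamma_A\cdot\cC_A$, where $\widetilde\Gamma_A=\langle\cC_A,\hat\cC_A\rangle$. Since $A'$ is built from $A$ by finitely many valid gluings, I would argue by induction on the number of operations, maintaining the invariant that the supercluster of the current polyhedron lies in $\widetilde\Gamma_A\cdot\widetilde\cC_A$ and that its superpacking group is contained in $\widetilde\Gamma_A$. The base case is $A$ itself, where the invariant is immediate. Granting the invariant, the theorem follows from the $\widetilde\Gamma_A$-invariance of $\widetilde\sP(A)$, as assembled below.

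The heart of the argument is to interpret the gluing operation of \defref{gluingop}, as realized in the circle model produced by the \KAT\ Theorem, as a reflection. I would show that attaching a copy of $A$ along a face $f$ is realized, after stereographic projection, by reflecting through the cocluster circle $\hat v_f\in\hat\cC$ dual to $f$, while attaching a copy at a vertex $w$ is realized by reflecting through the cluster circle $v_w\in\cC$ centered at $w$. The validity hypothesis of \defref{gluingop}---that the glued faces (resp.\ vertex links) agree in type and cyclic order---is exactly what forces this reflection to carry the tangency and orthogonality pattern of one copy onto the other; by the uniqueness clause of the \KAT\ Theorem the reflected configuration is then \emph{the} circle realization of the attached copy. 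Consequently the new copy contributes the supercluster $R(\widetilde\cC_A)$ for a reflection $R\in\{R_{\hat v_f},\,R_{v_w}\}$, and since the glued-in circle already lies in $\widetilde\Gamma_A\cdot\widetilde\cC_A$ by the inductive invariant, $R\in\widetilde\Gamma_A$; hence $R(\widetilde\cC_A)\subseteq\widetilde\Gamma_A\cdot\widetilde\cC_A$ and the invariant is preserved.

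With the invariant in hand the remaining steps are formal. A reflection in any circle of the form $g\cdot s$ with $g\in\widetilde\Gamma_A$ and $s\in\widetilde\cC_A$ is the conjugate $gR_sg^{-1}\in\widetilde\Gamma_A$, so every generator of $\widetilde\Gamma_{A'}$ lies in $\widetilde\Gamma_A$ and thus $\widetilde\Gamma_{A'}\le\widetilde\Gamma_A$. Moreover $\cC_{A'}$ consists of circles about vertices of $A'$, each of which is a vertex of some reflected copy of $A$, so $\cC_{A'}\subseteq\widetilde\Gamma_A\cdot\cC_A=\widetilde\sP(A)$. Combining these,
\[
\widetilde\sP(A')=\widetilde\Gamma_{A'}\cdot\cC_{A'}\subseteq\widetilde\Gamma_A\cdot\widetilde\sP(A)=\widetilde\sP(A),
\]
the last equality holding because $\widetilde\sP(A)$ is a single $\widetilde\Gamma_A$-orbit. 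I note that superintegrality of $A$ is not used in the containment itself; rather, it is the payoff, since the inclusion forces every bend appearing in $\widetilde\sP(A')$ to be one of the integer bends of $\widetilde\sP(A)$.

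The step I expect to be the main obstacle is the geometric realization in the second paragraph: rigorously matching the purely combinatorial identification of \defref{gluingop} with reflection through the correct cluster or cocluster circle. This demands a careful bookkeeping of how the midsphere construction and stereographic projection assign circles to the vertices and faces of $A$, together with a proof that the ``same type, same cyclic order'' condition is equivalent to the shared circle being fixed while the two sides are interchanged. I would also verify, using \thmref{structthm}, that the reflected copy respects the cluster--cocluster incidence relations, so that $A'$ is again a legitimate cluster--cocluster pair; the invalid-gluing example already displayed shows that this can fail when the hypothesis is dropped, which is precisely why validity cannot be omitted.
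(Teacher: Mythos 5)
Your proposal is correct and follows essentially the same route as the paper: the paper's proof likewise realizes a valid gluing (treated explicitly in the vertex case) as reflection $R_v$ through the shared cluster circle, so that every circle of the attached copy is $R_v$ applied to a circle of $A$, and then invokes \lemref{refrelation} ($R_{v_1R_v}=R_vR_{v_1}R_v$) --- your conjugation identity $R_{g\cdot s}=gR_sg^{-1}$ --- to conclude that all reflections of $A'$ lie in $\widetilde\Gamma_A$, giving the orbit containment. The only difference is thoroughness: the paper handles a single gluing and leaves the face case, the induction over repeated gluings, and the KAT-uniqueness justification implicit, all of which you spell out.
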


\begin{proof}
Let $A = \{V_A, E_A, F_A\}$ be superintegral and $A' = \{V_{A'}, E_{A'},F_{A'}\}$ be the polyhedron obtained by gluing $B = \{V_B, E_B, F_B\}$ to $A$ along vertex $v$. By definition, $V_{A'} = (V_A \cup V_B) \backslash v$, but in particular each vertex in $V_B$ is obtained by action of $R_v$ on some vertex in $V_A$. Similarly, each face in $V_{A'}$ is either already in $V_A$ or the result of $R_v$ applied to a face in $V_{A}$. 

By \lemref{refrelation}, all reflections in $A'\backslash B$ can be rewritten as a composition of reflections in $A$, since all elements of $A'$ are simply reflections in $A$ applied to elements of $A$. 
\end{proof}

\begin{lemma} \label{refrelation}
Let $\hat v_1 = v_1 R_{v}$. Then a reflection about $\hat v_1$ is equivalent to a series of reflections about $v$ and $v_1$, in particular 
\begin{center}
    $R_{\hat v_1} = R_v R_{v_1} R_v$.
\end{center}
\end{lemma}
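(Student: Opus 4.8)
The plan is to verify the identity directly from the matrix description of reflections in Definition~\ref{reflmatrix}, namely $R_u = I + 2Qu^{T}u$ acting on row vectors from the right. Conceptually the statement is an instance of the general principle that conjugating a reflection by an isometry yields the reflection in the \emph{image} of the mirror; here the conjugating isometry is $R_v$ itself, and it carries the mirror $v_1$ to $v_1R_v=\hat v_1$. Because our reflections are encoded by right-acting matrices built from $Q$, the only genuine work is to confirm that the transposes bookkeep correctly.

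First I would record two structural facts about $R_v$. It is an involution, $R_v^{2}=I$, which follows from expanding $R_v^2=(I+2Qv^Tv)^2$ and using $vQv^{T}=\langle v,v\rangle_Q=-1$ (the normalization of inversive coordinates). It also preserves the form, $R_vQR_v^{T}=Q$, by the same expansion. The algebraic heart of the proof, however, is the commutation relation
\[
QR_v^{T}=R_vQ,
\]
which I would isolate as the key step: since $Q$ is symmetric we have $R_v^{T}=I+2v^{T}vQ$, and both $QR_v^{T}$ and $R_vQ$ equal $Q+2Qv^{T}vQ$. As a byproduct, the fact that $R_v$ preserves the form gives $\langle\hat v_1,\hat v_1\rangle_Q=\langle v_1,v_1\rangle_Q=-1$, so $\hat v_1$ is again a legitimate inversive coordinate and $R_{\hat v_1}$ is well defined.

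Next I would expand both sides and match them. Substituting $\hat v_1=v_1R_v$ into the reflection formula gives
\[
R_{\hat v_1}=I+2Q(v_1R_v)^{T}(v_1R_v)=I+2QR_v^{T}v_1^{T}v_1R_v,
\]
while on the other side
\[
R_vR_{v_1}R_v=R_v\bigl(I+2Qv_1^{T}v_1\bigr)R_v=R_v^{2}+2R_vQv_1^{T}v_1R_v=I+2R_vQv_1^{T}v_1R_v,
\]
using $R_v^{2}=I$. Replacing $QR_v^{T}$ by $R_vQ$ via the commutation relation makes the two expressions identical.

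I expect the main obstacle to be nothing deep but rather convention discipline: because the matrices act on the right, one must be scrupulous about which factor is transposed, and the relation $QR_v^{T}=R_vQ$ is precisely what reconciles the transpose produced by $(v_1R_v)^{T}$ with the untransposed conjugating factor $R_v$ on the right-hand side. Once that identity is in hand, the remainder is the two-line match displayed above.
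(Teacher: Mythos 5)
Your proof is correct, and it pursues the same basic strategy as the paper's---substitute $\hat v_1 = v_1R_v$ into the formula $R_u = I + 2Qu^{T}u$ and verify the identity by matrix algebra---but your decomposition is genuinely different and arguably tighter. The paper expands $R_{\hat v_1} = I + 2Q[I+2v^{T}vQ^{T}]v_1^{T}v_1[I+2Qv^{T}v]$ into the full sum of monomials and then simply asserts that this sum equals $R_vR_{v_1}R_v$; making that last step rigorous requires the reader to expand the triple product as well and notice that the terms $2Qv^{T}v + 2Qv^{T}v + 4Qv^{T}vQv^{T}v$ cancel, using the normalization $vQv^{T} = \langle v,v\rangle_Q = -1$. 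Your version localizes that same normalization in the single fact $R_v^{2}=I$ and replaces the monomial bookkeeping by the commutation relation $QR_v^{T} = R_vQ$, which, as you correctly note, needs only the symmetry of $Q$ and no normalization at all; both sides then collapse to the common expression $I + 2R_vQv_1^{T}v_1R_v$ with no further expansion. Beyond brevity, your route buys two things the paper's proof leaves implicit: the check that $\hat v_1$ is itself a unit-norm inversive coordinate (so $R_{\hat v_1}$ is well defined), obtained for free from $R_vQR_v^{T}=Q$, and a transparent statement of the general principle at work, namely that conjugating a reflection by an isometry of the form yields the reflection in the image of the mirror.
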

\begin{proof}
\begin{align*}
    R_{\hat v_1} &= R_{v_1 R_{v}} \\
    &= I + 2Q(v_1 R_{v})^T (v_1 R_{v}) \\
    &= I + 2Q[I + 2v^TvQ^T]v_1^Tv_1[I + 2Qv^Tv] \\
    &= I + 2Qv_1^Tv_1 + 4Qv^TvQv_1^Tv_1 + 4Qv_1^Tv_1Qv^Tv + 8Qv^TvQv_1^Tv_1Qv^Tv \\
    &= R_vR_{v_1}R_v.
\end{align*}
\end{proof}

By \thmref{superint}, the groups associated with the above described superintegral polyhedra are arithmetic. However, not all integral polyhedra are superintegral. As it turns out, the contrapositive of this theorem completely describes all known integral but not superintegral polyhedra. 
\subsubsection{Integral but not Superintegral Polyhedra}
Of the known integral seed polyhedra, the hexagonal pyramid and 6v7f\_2 are not superintegral. \lemref{bendint} does not apply to either case as rational entries are present in one or more bend matrix, so integrality is proved by conjugation of the bend matrices. 
Superintegrality can be disproved by extension of \lemref{qnonintpoly} 
as well as application of \thmref{superint}, as the polyhedra are not arithmetic by the criterion described in \cite{Vin67}.

\subsection{Nonintegral Polyhedra} 
Aside from the four seed polyhedra and their gluings, we have documented many more polyhedra which are not integral. These can be understood in two broad subcategories.

\subsubsection{Rational-Nonintegral Polyhedra}
Of the nonintegral polyhedra, some (7v8f\_9, 7v9f\_8) have exclusively rational packings: rather than all bends being integral, they are all rational. An intuitive step would be to apply \lemref{rescaling} and find a conformally equivalent integral packing, however these packings cannot be rescaled, a result of the following lemma. 

\begin{lemma} \label{qnonintpoly}
Let $\{B_1, \dots ,B_n\}$ be bend matrices (see \defref{bend matrix}) of $\Pi$ and $B$ be any product of $\{B_1, \dots ,B_n\}$. If there is an entry in $B^n$ whose denominator grows without bound as $n \rightarrow \infty$, then $\Pi$ cannot be integral.
\end{lemma}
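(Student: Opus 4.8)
The plan is to argue by contraposition: I would reduce integrality to a boundedness statement about denominators, then trace the denominator growth of $B^n$ into the actual bends of the packing. The first step is to recall the consequence of \lemref{rescaling}: a single conformal rescaling multiplies every bend in the packing by one common constant $\lambda$. Hence the packing admits an integral rescaling if and only if every bend is rational with a \emph{common} denominator, i.e. the denominators of the bends are bounded. (If they were unbounded, no single $\lambda$ could clear them; if bounded by $Q$, take $\lambda$ to be their least common multiple.) So it suffices to prove that the hypothesis forces the bends of $\Pi$'s packing to have denominators growing without bound.

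Next I would locate where the matrix $B$ appears among the bends. Writing $\mathbf b$ for the column of $V$ recording the bends of $\cC$, the defining relation $BV=VR$ of \defref{bend matrix} iterates to $B^nV=VR^n$. Since $B$ is a product of the bend matrices attached to reflections in $\hat{\cC}$, the composite $R^n$ lies in the group $\Gamma=\langle\hat{\cC}\rangle$, so the rows of $B^nV=VR^n$ are inversive coordinates of genuine circles of the packing $\Gamma\cdot\cC$. Reading off the bend column shows that $B^n\mathbf b$ is exactly the vector of bends of those circles; thus every entry of $B^n\mathbf b$, for every $n$, is an actual bend in the packing.

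The clean half of the argument is the converse direction, which I would prove as a lattice statement. Suppose $\Pi$ is integral, so after rescaling every vector $M\mathbf b$, with $M$ any product of the bend matrices, lies in $\mathbb Z^m$. Let $W=\operatorname{span}_{\mathbb Q}\{M\mathbf b\}$ and let $L\subset W\cap\mathbb Z^m$ be the $\mathbb Z$-module generated by the orbit $\{M\mathbf b\}$; since this orbit sits inside $\mathbb Z^m$, the module $L$ is a full-rank lattice in $W$. Applying any bend matrix sends an orbit vector to another orbit vector, so each bend matrix, and hence $B$, preserves $L$. In a $\mathbb Z$-basis of $L$ the map $B|_W$ is therefore an \emph{integer} matrix, giving $B^n\mathbf b\in L\subset\mathbb Z^m$ for all $n$, so these bends have denominator $1$ with no growth. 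This establishes the contrapositive phrased for the bend vector: if some entry of $B^n\mathbf b$ has denominator growing without bound, then $\Pi$ is not integral.

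The remaining, and hardest, step is to pass from the stated hypothesis on the matrix $B^n$ to the corresponding growth in the vector $B^n\mathbf b$, i.e. to rule out cancellation. Fixing a prime $p$ realizing the denominator growth, unbounded denominators of $B^n$ mean $B$ has an eigenvalue $\lambda$ over $\overline{\mathbb Q}_p$ with $\abs{\lambda}_p>1$; this growth reaches $B^n\mathbf b$ precisely when $\mathbf b$ has nonzero component in the generalized eigenspace of the expanding eigenvalues. I would close this gap by combining the lattice argument above (which shows the orbit of $\mathbf b$ cannot be confined to a $B$-invariant, integrally defined subspace on which $B$ acts without denominators) with the genericity of the seed bends in our configurations; equivalently, in each concrete application one directly verifies that the growth witnessed in $B^n$ already appears in an actual bend $B^n\mathbf b$, whereupon the contrapositive of the previous paragraph applies. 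This transfer from matrix to vector is the main obstacle; the rest of the argument is formal.
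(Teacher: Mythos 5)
You should know at the outset that the paper never proves \lemref{qnonintpoly}: it is stated bare, and the only supporting material is the remark following \lemref{rescaling} (bounded-denominator rational bends can be rescaled to integrality) and the computation in \secref{nonintegralpoly} exhibiting $(4.9)^n$ for 7v9f\_8 with denominators of the form $c^n$. So there is no paper proof to compare against, and your proposal must stand on its own. Its first three steps do: the reduction via \lemref{rescaling} to a boundedness statement, the identification $B^nV=VW^n$ (with $W$ the word in the cocluster reflection matrices corresponding to $B$) showing that the entries of $B^n\mathbf b$, $\mathbf b=Ve_2$ the bend column, are honest bends of the packing, and the invariant-lattice argument proving that integrality forces $B^n\mathbf b$ to have bounded denominators. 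That is already a complete proof of the vector form of the lemma, which is more than the paper records.

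The gap you flag — transferring unbounded denominators from the matrix $B^n$ to the vector $B^n\mathbf b$ — is genuine, but your proposed closure is not a proof: ``genericity of the seed bends'' is never defined or verified, and retreating to ``check it in each concrete application'' replaces the lemma by a different statement. You can instead close it uniformly by running your own lattice argument one level up. Suppose the packing is integral and rescale so every orbit vector $M\mathbf b$ lies in $\Z^m$; let $L$ be the $\Z$-span of the orbit, which every bend matrix preserves. The span of the orbit is $V\cdot\operatorname{span}\{We_2\}$, and $\operatorname{span}\{We_2\}$ is a nonzero subspace of $\R^{n+2}$ invariant under $\Gamma=\langle\hat\cC\rangle$; since $\Gamma$ has finite covolume (hypothesis of \thmref{structthm}), it is Zariski dense in $O_Q(\R)\cong O(n+1,1)$, whose standard representation is irreducible, so the orbit spans all of the column space of $V$. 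When $V$ is square and invertible (the setting of the paper's own \lemref{bounded B}), $L$ is therefore a full-rank sublattice of $\Z^m$, hence of finite index, so $D\Z^m\subseteq L$ for some $D\in\N$; then $B^n(De_j)\in B^nL\subseteq L\subseteq\Z^m$ for every standard basis vector $e_j$, i.e.\ every entry of every $B^n$ lies in $\tfrac1D\Z$. This bounds the denominators of the matrix powers themselves, contradicting the hypothesis, with no genericity assumption at all — only that some cluster bend is nonzero. One caveat affects both your reading and the paper's own application: when $\abs{\cC}>n+2$ (7v9f\_8 has seven cluster circles while $n+2=4$), the equation $BV=VR$ determines $B$ only on the column space of $V$, so entries of $B^n$ transverse to that subspace are convention-dependent and geometrically meaningless; in that case the lemma is only coherent in the vector form $B^n\mathbf b$ (equivalently, restricted to the column space), which is precisely the statement your lattice argument establishes.
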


\subsubsection{Nonrational-Nonintegral Polyhedra}
All polyhedra which do not fit in one of the previous categories can be proved nonintegral by \lemref{nonintpoly}.

\section{Bianchi Group Packings}\label{bianchi packings}

\begin{Def}[Bianchi group]\label{Bianchi}
A Bianchi group $Bi(m)$ is the set of matrices 
\begin{equation}
SL_2(\mathcal{O}_m) \rtimes \langle \tau \rangle,
\end{equation}
where $\mathcal{O}_m$ indicates the ring of integers $\Z[\sqrt{-m}]$, $m$ is a positive square-free integer, and $\tau$ is a second-order element that acts on $SL_2(\mathcal{O}_m)$ as complex conjugation \cite{Vin90,BM13}. 
\end{Def}

These groups can also be viewed as discrete groups of isometries. \cite{Bia92} found that $Bi(m)$ is reflective---meaning that it is generated by a finite set of reflections---for $m\le19, m\not\in\{14,17\}$ (\cite{Bia92}); this list is complete (\cite{BM13}). 

\cite{Vin72} contains an algorithm on general quadratic forms, which takes the integral automorphism group of a quadratic form and halts if the reflection group is finitely generated, producing its fundamental polyhedron. \cite{Mcl13} applied this algorithm to the reflective \emph{extended Bianchi groups} $\hat{Bi}(m)$---the maximal discrete extension of $Bi(m)$ (cf. \cite{All66,BM13}, see also \cite{Ruz90,Sha90}). To be discrete, the fundamental polyhedron produced in the case of a finitely generated reflection group must be a Coxeter polyhedron, and thus a Coxeter diagram can be drawn. \cite{Mcl13} provides the roots bounding the fundamental polyhedron obtained from Vinberg's algorithm for all reflective extended Bianchi groups.\footnote{Corrections for errors in \cite{Mcl13}'s listings of roots can be found in \secref{mcl fix}.}

\subsection{Determining the clusters}

We computed the Gram matrix for each $\hat{Bi}(m)$.
We then iterated through the Gram matrix of each $\hat{Bi}(m)$ to identify all existing clusters and subgroups thereof.\footnote{We excluded all clusters wherein two vertices were orthogonal to each other due to redundancy in crystallographic circle packings.} By \thmref{structthm}, a Gram matrix can be used to identify clusters by identifying those rows whose entries $G_i$ exclusively satisfy the condition $\abs{G_i} \ge 1$ or $G_i = 0$. 

Every cluster $\cC$ within each $\hat{Bi}(m)$ was then used to produce a crystallographic packing by applying \thmref{structthm} via the methods outlined in \secref{producing cryst packing} above.

\begin{figure}[H]
\begin{minipage}{.5\textwidth}
    \[
      \begin{pmatrix}
      0 & 0 & -1 & 0 \\
      8 & 6 & 3 & 2\sqrt{10}
      \end{pmatrix}
    \]
    \caption*{(a)}
\end{minipage}
\begin{minipage}{.5\textwidth}
    \[
      \begin{pmatrix}
      1 & 0 & 1 & 0 \\
      0 & 0 & 0 & -1 \\
      \sqrt{10} & 0 & 0 & 1 \\
      -1 & 1 & 0 & 0 \\
      2\sqrt{2} & \sqrt{2} & 0 & \sqrt{5} \\
      3\sqrt{10} & 3\sqrt{10} & \sqrt{10} & 9 \\
      4\sqrt{10} & 4\sqrt{10} & 2\sqrt{10} & 11
      \end{pmatrix}
    \]
      \caption*{(b)}
\end{minipage}
\vspace*{\floatsep}
\begin{minipage}{.5\textwidth}
    \begin{tikzpicture}[scale=1.9]
    \coordinate (nine) at (0.5,0);
    \coordinate (eight) at (1.5, 0);
    \coordinate (seven) at (2,0);
    \coordinate (five) at (3,0);
    \coordinate (six) at (4,0.5);
    \coordinate (three) at (0.5,1);
    \coordinate (four) at (1.5,1);
    \coordinate (one) at (2,1);
    \coordinate (two) at (3,1);
    
    \draw[thick,double distance=2pt] (six) -- (two);
    \draw[thick,double distance=2pt] (six) -- (five);
    \draw[thick] (two) -- (five);
    \draw[ultra thick] (one) -- (two);
    \draw[ultra thick] (five) -- (seven);
    \draw[ultra thick] (three) -- (four);
    \draw[ultra thick] (eight) -- (nine);
    \draw[dashed] (one) -- (seven);
    \draw[dashed] (one) -- (eight);
    \draw[dashed] (one) -- (nine);
    \draw[dashed] (two) -- (eight);
    \draw[dashed] (three) -- (six);
    \draw[dashed] (three) -- (seven);
    \draw[dashed] (three) -- (eight);
    \draw[dashed] (three) -- (nine);
    \draw[dashed] (four) -- (five);
    \draw[dashed] (four) -- (seven);
    \draw[dashed] (four) -- (eight);
    \draw[dashed] (four) -- (nine);
    \draw[dashed] (six) -- (nine);
    
{    \filldraw[fill=blue] (one) circle (2pt);}
{    \filldraw[fill=red] (two) circle (2pt);}
{    \filldraw[fill=red] (three) circle (2pt);}
{    \filldraw[fill=red] (four) circle (2pt);}
{    \filldraw[fill=red] (five) circle (2pt);}
{    \filldraw[fill=red] (six) circle (2pt);}
{    \filldraw[fill=blue] (seven) circle (2pt);}
{    \filldraw[fill=red] (eight) circle (2pt);}
{    \filldraw[fill=red] (nine) circle (2pt);}
\end{tikzpicture}
    \caption*{(c)}
\end{minipage}
\begin{minipage}{.4\textwidth}
    \includegraphics[scale=.5]{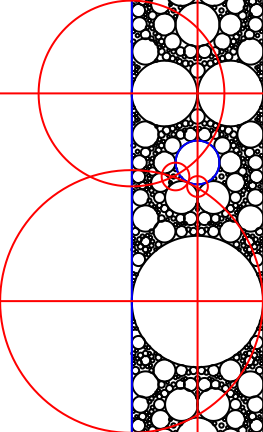}
    \caption*{(d)}
\end{minipage}
\caption{(a) Coordinates of cluster $\{1,7\}$ for $\hat{Bi}(10)$. (b) Corresponding cocluster coordinates. (c) Coxeter diagram for $\hat{Bi}(10)$, with cluster \{1,7\} highlighted in blue. (d) Diagram of $\hat{Bi}(10)$ cluster \{1,7\} packing, cluster in blue, with cocluster in red.}
\end{figure}

\subsection{Results}

All crystallographic packings that arise from the extended Bianchi groups have been documented, namely all packings from $\hat{Bi}(m)$ for $m=$ 1, 2, 5, 6, 7, 10, 11, 13, 14, 15, 17, 19, 21, 30, 33, 39, each with their corresponding Coxeter diagram, inversive coordinates matrix, Gram matrix, and diagrams of all possible cluster packings.\footnote{Note that $\hat{Bi}(3)$ as recorded in \cite{BM13} does not yield a crystallographic packing. See \secref{bi3fix} for treatment of $\hat{Bi}(3)$.}

All extended Bianchi group packings are determined to be arithmetic. We then classified all integral and non-integral extended Bianchi group packings.\footnote{Note that this is up to rescaling via circle inversion.} There are 145 integral $\hat{Bi}(m)$ packings and 224 non-integral $\hat{Bi}(m)$ packings, a complete list of which can be found in \secref{int and nonint bianchi}. Our first step was to compute the orbit of each packing up to some generation, and empirically conjecture whether or not the packing would be integral based on the bends produced. We then rigorously proved such conjectures, as described in the following sections.

\begin{figure}[H]
\centering
\begin{subfigure}{.5\textwidth}
  \centering
  \includegraphics[scale=.5]{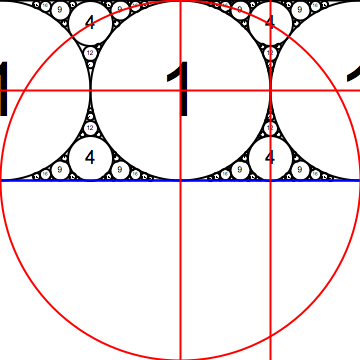}
  \captionof*{figure}{(a)}
  \label{Integral Bi diagram}
\end{subfigure}
\begin{subfigure}{.5\textwidth}
  \centering
  \[
  \begin{pmatrix}
  1 & 0 & 0 & 0 \\
  0 & 1 & 0 & 0 \\
  0 & 0 & 1 & 0 \\
  2 & 2 & 2 & -1
  \end{pmatrix}
  \cdot
  \begin{pmatrix}
  0 & 0 & 0 & -1 \\
  2 & 0 & 0 & 1 \\
  0 & 2 & 0 & 1 \\
  2 & 2 & 2 & 1
  \end{pmatrix}
  \]
  \caption*{(b)}
\end{subfigure}
\caption{(a) Integral packing generated from $\hat{Bi}(1)$ cluster \{3\}, depicted in blue, along with cocluster, depicted in red. Numbers indicate bends.
(b) $BV$ for $\hat{Bi}(1)$ cluster \{3\} and its orbit, demonstrating integrality of the packing.}
 \label{bi int by bendint}
\end{figure}

\subsubsection{Integral Bianchi group packings}
To prove integrality for Bianchi group packings, \lemref{rescaling} is first applied when necessary to rescale inversive coordinates of the clusters to integrality. 
\par In the cases where the associated bend matrices $B_i$ for a given Bianchi packing are integral, integrality is immediately proven via \lemref{bendint}. (See Figure \ref{bi int by bendint} for an example.) In the cases where the associated $B_i$'s are not integral, integrality can still be proven through \lemref{bendint} by first calling upon the following lemma, the proof of which relies on the integrality of all right-acting reflection matrices $R$ associated with each $\hat{Bi}(m)$ packing.\footnote{As proven in \cite{BM13}, there exist finitely-many $\hat{Bi}(m)$ packings. We have generated reflection matrices $R$ for all such packings, and determined that every associated $R$ matrix is integral. Note that an alternative proof of integrality for Bianchi groups whose bend matrices are not integral is implied by \cite{KN18}'s discussion of arithmeticity of the supergroup of a superintegral packing.}

\begin{lemma}\label{bounded B}
The product of any bend matrices of an integral Bianchi group packing is ``bounded rational."
\end{lemma}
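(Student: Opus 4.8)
The plan is to exploit the conjugation identity relating bend matrices to reflection matrices, which isolates all unbounded growth inside an integer matrix and confines all denominators to a single fixed quantity. By \defref{bend matrix}, the bend matrix $B$ associated to a reflection $R$ about a circle of $\hat\cC$ satisfies $BV=VR$, where $V$ is the rank-$(n+2)$ inversive-coordinate matrix of $\cC$. The full-rank hypothesis furnishes a left inverse $V^{+}$ with $V^{+}V=I_{n+2}$ (when $V$ is square this is simply $V^{-1}$), so one may take $B=VRV^{+}$. Because the inner factor telescopes, $V^{+}V=I$, any finite word in the bend matrices satisfies
\begin{equation*}
B_{i_1}B_{i_2}\cdots B_{i_k}=V\,(R_{i_1}R_{i_2}\cdots R_{i_k})\,V^{+},
\end{equation*}
and the outer factors $V,V^{+}$ are fixed once $\cC$ is chosen, so the entire $k$-dependence lives in the middle factor.

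First I would invoke the fact recorded in the footnote above: for every $\hat{Bi}(m)$ packing each reflection matrix $R_{i_j}$ is integral. Hence $M:=R_{i_1}\cdots R_{i_k}\in M_{n+2}(\Z)$ for every word length $k$, even though its entries grow without bound as one ranges over the infinite reflection group. Next I would track the denominators introduced by the conjugation. Writing $V^{+}=(\det(V^{T}V))^{-1}\operatorname{adj}(V^{T}V)\,V^{T}$ exhibits $\det(V^{T}V)$ as the sole denominator, whence $\det(V^{T}V)\cdot B_{i_1}\cdots B_{i_k}=V\,M\,\operatorname{adj}(V^{T}V)\,V^{T}$. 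The entries of $V$ lie in a fixed number field $K$---for the extended Bianchi groups the field generated by the roots tabulated in \cite{Mcl13}---and may be taken to be algebraic integers, as they are in that data; consequently the right-hand side lies in $M_{n+2}(\mathcal{O}_{K})$.

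It remains to clear the possibly irrational denominator to a rational integer. Since each $B_{i_j}$ is rational in the non-integral case under consideration, so is the product $B_{i_1}\cdots B_{i_k}$; multiplying the membership $\det(V^{T}V)\cdot B_{i_1}\cdots B_{i_k}\in M_{n+2}(\mathcal{O}_{K})$ by the remaining Galois conjugates of $\det(V^{T}V)$ and setting $N:=N_{K/\Q}\!\big(\det(V^{T}V)\big)\in\Z$ yields $N\cdot B_{i_1}\cdots B_{i_k}\in M_{n+2}(\mathcal{O}_{K})\cap M_{n+2}(\Q)=M_{n+2}(\Z)$, using $\mathcal{O}_{K}\cap\Q=\Z$. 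Thus every entry of the product has denominator dividing the single fixed integer $N$, independent of the word---exactly the claim that the product is \emph{bounded rational}. I expect this final algebraic-number-theoretic step to be the main obstacle: one must confirm that the coordinates can be normalized to algebraic integers and that replacing $\det(V^{T}V)$ by its norm $N$ upgrades the a-priori bound ``denominators divide $\det(V^{T}V)$'' to an honest integer bound. With the lemma in hand, the remark following \lemref{rescaling} then produces a conformally equivalent integral packing, which is how this result feeds the integrality proof via \lemref{bendint}.
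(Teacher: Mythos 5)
Your proof is correct and takes essentially the same route as the paper's: both hinge on the conjugation identity $B_{i_1}\cdots B_{i_k}=V\,(R_{i_1}\cdots R_{i_k})\,V^{-1}$ together with the integrality of all reflection matrices $R$ for the extended Bianchi groups, so that the entire word-dependence sits inside an integer matrix and the fixed conjugating matrices can contribute only a bounded denominator. You differ from the paper only in rigor, not in strategy: where the paper argues informally by contradiction (unbounded denominators in $B'$ would force unbounded denominators in $R'=V^{-1}B'V$), you pin down an explicit uniform denominator by normalizing $V$ to algebraic-integer entries (harmless, since the conjugation is unchanged under rescaling $V$) and clearing $\det(V^{T}V)$ to its field norm.
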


\begin{proof}
By (\ref{BV}), for $V$ a $(n+2)\times(n+2)$ full-rank matrix of inversive coordinates, the right-acting reflection matrix $R$ can be expressed as
\begin{equation}\label{R as V inv}
    R=V^{-1} B V.
\end{equation}
Since $B$ is simply a left-acting reflection matrix, action of $B_i$ on $B_j V$ is simply further reflection of $V$, and hence $B_i B_j$ is a proper bend matrix. Similarly, $R$ can be broken into a series of reflection matrices $R_1 \cdots R_i$. Thus, we have
\begin{equation}
   V  R_1 \cdots R_i V^{-1} = B_i\cdots B_1.
\end{equation}
Suppose that $B' = B_i \cdots B_1$ has unbounded denominators. Since $V,V^{-1}$ are fixed matrices, this would imply that $R' = R_1 \cdots R_i$ has unbounded denominators as well. However, as each $R_j$ is known to be integral, $R'$ is definitely integral. Therefore, $B'$ cannot have unbounded denominators.
\end{proof}

\lemref{bounded B} implies that it is possible to clear the denominators of any $B'$ through some rescaling of $V$. Because there are no irrational entries in the integral Bianchi group bend matrices, and all bends of the cluster can be rescaled to integrality by \lemref{rescaling}, this proves that all bends produced by $B'V$ will then be integral.

\begin{figure}[H]
\centering
\begin{subfigure}{.5\textwidth}
  \centering
  \includegraphics[scale=.5]{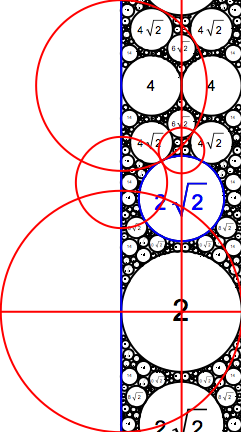}
  \captionof*{figure}{(a)}
  \label{Non-int Bi diagram}
\end{subfigure}
\begin{subfigure}{.5\textwidth}
  \centering
 \[
\begingroup
\renewcommand*{\arraystretch}{1.5}
 \begin{pmatrix}
 1 & 0 & 0 & 0 \\
 3\sqrt{2} & \frac{5}{2} & 2\sqrt{2} & -\half \\
 6 & \frac{3\sqrt{2}}{2} & 5 & -\frac{\sqrt{2}}{2} \\
 45\sqrt{2} & \frac{45}{2} & 30\sqrt{2} & -\frac{13}{2}
 \end{pmatrix}
\endgroup
 \]
 \caption*{(b)}
\end{subfigure}
\caption{(a) Nonintegral packing generated from $\hat{Bi}(14)$ cluster \{1,8\}, depicted in blue, along with cocluster, depicted in red. Numbers indicate bends. Note mixture of integral and nonintegral bends, indicating that this packing cannot be rescaled to integrality.
(b) One of the associated bend matrices $B$. Note the existence of a nonlinear relation in each row except the first.}
\end{figure}

\subsection{Nonintegral Bianchi group packings}
While all Bianchi groups give rise to integral packings, there exist packings whose cluster circles cannot be rescaled to integrality which will produce nonintegral packings. For instance, the packing produced by $\hat{Bi}(17)$ with the cluster of vertices $\{8,13\}$ has both a bend of $\sqrt{34}$ (vertex 8) and a bend of $\sqrt{17}$ (vertex 13); clearly this cluster cannot be rescaled in such a way that clears both bends to integrality. 

To prove nonintegrality of such packings, we built an over-determined matrix $V$ from the inversive coordinates of the cluster, with supplementary inversive coordinates from the orbit added if necessary. We then applied \lemref{nonintpoly}.
In every non-integral packing, the solution set for $g$ contained irrational coefficients, indicating an inherent nonintegral relation between the entries of the bend matrices for the packing. (See \secref{nonint bianchi proof example} for an example.)

\section{Higher Dimensional Packings}\label{hd}

We now seek to apply \thmref{structthm} to the finitely-many commensurability classes guaranteed by \thmref{finite}, as these classes are known to exist but are not guaranteed to admit packings. There are a number of techniques available to produce such candidates. We note that all packings produced through these techniques are arithmetic, and in doing so we show that the quadratic forms in question are commensurate with a supergroup of a packing. 

We consider for fixed $d$ and $n$ the configuration of inversive coordinates $\nye{V}=\{v_i\}_{i=1}^m$ and Gram matrix $G=\{g_{ij}\}_{i,j=1}^m$. (More generally, we'll consider $i,j$ instead ranging on some explicit index set.) In \thmref{validatedoubling}, we show an important result that validates a technique (doubling, see \defref{doublingdef}) that contributes to producing the desired packings. 

\begin{Def}
For $1\le a,b\le m$, we write $b.a$, ``the action of $b$ on $a$" or ``$b$ acting on $a$," to denote $v_aR_{v_b}$. 
\end{Def}

Note that this action is right-associative, i.e. $a.b.c=a.(b.c)$. 

\begin{lemma}
$a.a.b=b$, as inversion is an involution. 
\end{lemma}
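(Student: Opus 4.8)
The plan is to reduce the claim to the fact that each reflection matrix squares to the identity. First I would unpack the notation. By right-associativity, $a.a.b = a.(a.b)$, and since in the expression $x.y$ the first slot always names the mirror while the second names the object being reflected, the inner action gives $a.b = v_b R_{v_a}$ and the outer action reflects this vector once more about the \emph{same} wall $v_a$:
\begin{equation*}
a.a.b = (v_b R_{v_a})R_{v_a} = v_b\, R_{v_a}^2.
\end{equation*}
Thus it suffices to prove $R_{v_a}^2 = I$ for the reflection matrix of any inversive coordinate; the identity $a.a.b = b$ then follows immediately.

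Next I would verify this involution property directly from the explicit formula in \defref{reflmatrix}, namely $R_{\hat v} = I_n + 2Q\hat v^T\hat v$, together with the normalization $\ip{\hat v}{\hat v}_Q = \hat v Q \hat v^T = -1$ that every inversive coordinate satisfies (stated just before \defref{reflmatrix}). Expanding the square gives
\begin{equation*}
R_{\hat v}^2 = I + 4Q\hat v^T\hat v + 4Q\hat v^T(\hat v Q \hat v^T)\hat v,
\end{equation*}
and the key simplification is that $\hat v Q \hat v^T$ is a \emph{scalar} equal to $-1$. It therefore collapses the last term to $-4Q\hat v^T\hat v$, which cancels the middle term and leaves $R_{\hat v}^2 = I$.

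I do not expect a serious obstacle. The only point that genuinely requires care is the bookkeeping of the dot-notation: one must recognize that $a.a.b$ applies the single reflection $R_{v_a}$ twice to the \emph{fixed} vector $v_b$, rather than reflecting about some newly produced sphere. Once that is settled, the remaining content is the short computation above, which uses nothing beyond the definition of the reflection matrix and the fact that inversive coordinates have squared $Q$-norm $-1$.
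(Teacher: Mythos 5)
Your proposal is correct and takes essentially the same route as the paper: the paper supplies no separate proof, simply asserting in the statement that inversion is an involution, and your reduction of $a.a.b$ to $v_b R_{v_a}^2$ followed by the computation $R_{\hat v}^2 = I$ (using the normalization $\hat v Q \hat v^T = -1$) is exactly the algebraic verification of that involution property, with the notational bookkeeping correctly handled.
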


\begin{lemma}
$(a.b).c=a.b.a.c$. 
\end{lemma}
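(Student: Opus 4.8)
The plan is to reduce both sides of the claimed identity to the same product of three reflection matrices applied to $v_c$, using the conjugation formula of \lemref{refrelation} to handle the one nontrivial step. Throughout I use the right-acting convention of \defref{reflmatrix}, so that a composition of reflections corresponds to multiplying the matrices in the order in which they are applied to the row vector on the left.

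First I would unwind the right-hand side via right-associativity. Since $a.c = v_c R_{v_a}$, applying the action of $b$ gives $b.(a.c) = (v_c R_{v_a})R_{v_b} = v_c R_{v_a}R_{v_b}$, and then applying the action of $a$ once more gives $a.b.a.c = a.\bigl(b.(a.c)\bigr) = (v_c R_{v_a}R_{v_b})R_{v_a} = v_c R_{v_a}R_{v_b}R_{v_a}$. No appeal beyond the definition of the dot-action is needed here.

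Next I would unwind the left-hand side. The inner expression $a.b$ is the sphere whose inversive coordinates are $w := v_b R_{v_a}$, namely $v_b$ reflected through $v_a$. By definition $(a.b).c$ is then the reflection of $v_c$ through this new sphere, so $(a.b).c = v_c R_{w} = v_c R_{v_b R_{v_a}}$. The key step is to rewrite $R_{v_b R_{v_a}}$: this is precisely the situation of \lemref{refrelation} with $\hat v_1 = v_b R_{v_a}$, i.e. taking $v_1 = v_b$ and $v = v_a$, which yields $R_{v_b R_{v_a}} = R_{v_a}R_{v_b}R_{v_a}$. Substituting gives $(a.b).c = v_c R_{v_a}R_{v_b}R_{v_a}$, which agrees with the right-hand side computed above, completing the proof.

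The only real subtlety, and the place where care is required, is the bookkeeping of the notation itself: one must correctly read which index is the mirror and which is the object in the composite $(a.b).c$, and in particular recognize that here the mirror is the \emph{already-reflected} sphere $a.b$, so that $R_{a.b}$ is a conjugated reflection rather than a primitive one. Once this is noticed, \lemref{refrelation} does all the work, and there is no need to expand anything in terms of the explicit form $R_{\hat v} = I + 2Q\hat v^T\hat v$, since that expansion was already carried out in the proof of \lemref{refrelation}.
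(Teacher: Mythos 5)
Your proof is correct and is essentially the paper's own argument: the paper's proof consists of the single remark that the identity is \lemref{refrelation} under different notation, and your write-up simply makes that translation explicit by unwinding both sides of the dot-notation and invoking \lemref{refrelation} to rewrite $R_{v_b R_{v_a}}$ as $R_{v_a}R_{v_b}R_{v_a}$. The extra bookkeeping you supply is exactly what the paper leaves implicit, so there is no substantive difference in approach.
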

\begin{proof}
This is \lemref{refrelation} under different notation. 
\end{proof}

\begin{Def}[reversing orientation]
Let $\nye{a}$ denote the oriented hypersphere $a$ with reversed orientation, i.e. $a.a$. 
\end{Def}

\subsection{Doubling}

\begin{Def}[doubling]\label{doublingdef}
Let $\nye{V}_j=\nye{V}\backslash\{v_j\}$ for $1\le j\le m$. We say that we \emph{double} $\nye{V}$ about $j$ when we compute $\nye{V}^j=\nye{V}_j\cup j.\nye{V}_j$. 
\end{Def}

Doubling can be thought of as a ``hyperbolic gluing" in the same vein as \defref{gluingop} for Euclidean polyhedra, wherein a configuration is extended into $\H^{n+1}$ through Poincar\'{e} extension and then doubled about a face. 

\begin{thm}\label{validatedoubling}
$\<\nye{V}^j\><\<\nye{V}\>$ and $\left[\<\nye{V}\>:\<\nye{V}^j\>\right]\le\infty$. 
\end{thm}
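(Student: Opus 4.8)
The plan is to reduce both assertions to \lemref{refrelation}, which rewrites a reflection about a doubled wall as a conjugate of an original reflection. Write $\Gamma=\<\nye{V}\>$ and $\Gamma'=\<\nye{V}^j\>$. Unwinding \defref{doublingdef}, the walls of $\nye{V}^j$ are the $v_i$ with $i\ne j$ (from $\nye{V}_j$) together with the reflected walls $j.v_i=v_iR_{v_j}$ with $i\ne j$ (from $j.\nye{V}_j$), so $\Gamma'$ is generated by the reflections $R_{v_i}$ for $i\ne j$ and the reflections $R_{v_iR_{v_j}}$ for $i\ne j$.

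First I would prove the containment $\Gamma'<\Gamma$. The reflections $R_{v_i}$ with $i\ne j$ are already among the generators of $\Gamma$, and for each reflected wall \lemref{refrelation} gives $R_{v_iR_{v_j}}=R_{v_j}R_{v_i}R_{v_j}$, which is visibly a product of generators of $\Gamma$. Hence every generator of $\Gamma'$ lies in $\Gamma$, and therefore $\Gamma'<\Gamma$.

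For the finite-index claim, the key point is that $R_{v_j}$ normalizes $\Gamma'$. Conjugating the generator $R_{v_i}$ ($i\ne j$) by $R_{v_j}$ yields $R_{v_j}R_{v_i}R_{v_j}$, which is exactly the generator coming from the doubled wall $j.v_i$, and conjugating $R_{v_j}R_{v_i}R_{v_j}$ by $R_{v_j}$ returns $R_{v_i}$ since $R_{v_j}^2=I$. Thus conjugation by $R_{v_j}$ permutes the generating set of $\Gamma'$ and preserves $\Gamma'$ setwise. Since the generators of $\Gamma$ are precisely the $R_{v_i}$, all of which lie in $\Gamma'$ except $R_{v_j}$, we have $\Gamma=\<\Gamma',R_{v_j}\>$; normalization then lets me push every occurrence of $R_{v_j}$ in a word to the right, so each element of $\Gamma$ takes the form $g$ or $gR_{v_j}$ with $g\in\Gamma'$. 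Hence $\Gamma=\Gamma'\cup\Gamma'R_{v_j}$ and $[\Gamma:\Gamma']\le 2<\infty$.

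The computation is short, so the step I would watch most carefully is the bookkeeping of which walls survive doubling: because $v_j$ itself is deleted from $\nye{V}^j$ and its reflected copy $v_jR_{v_j}$ is not adjoined, the reflection $R_{v_j}$ is not a priori a generator of $\Gamma'$, and it is exactly this missing generator that yields index $2$ rather than $1$ in general. This also matches the reading of doubling as a hyperbolic gluing (cf.\ \defref{doublingdef}): passing from $\Gamma$ to $\Gamma'$ unfolds the fundamental domain across the wall $v_j$, an index-two operation.
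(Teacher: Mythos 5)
Your proof is correct, and its containment half is essentially the paper's: the paper likewise notes that $\<\nye{V}^j\>$ is generated by reflections that visibly lie in $\<\nye{V}\>$, with \lemref{refrelation} supplying $R_{v_iR_{v_j}}=R_{v_j}R_{v_i}R_{v_j}$. For the index, however, you take a genuinely different route. The paper argues geometrically: assuming $\nye{V}$ extends by Poincar\'{e} extension to a finite-volume polytope $P$ in $\H^{n+1}$, it shows by an interior/boundedness analysis that the doubled configuration also bounds a finite-volume polytope $P^j$, and then invokes the fact that the ratio of covolumes equals the index $\left[\<\nye{V}\>:\<\nye{V}^j\>\right]$, which is therefore finite. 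Your argument is purely algebraic: conjugation by $R_{v_j}$ swaps the two halves of the generating set of $\Gamma'=\<\nye{V}^j\>$ (using $R_{v_j}^2=I$), hence normalizes $\Gamma'$; since the remaining generator of $\Gamma=\<\nye{V}\>$ is $R_{v_j}$ itself, pushing its occurrences to the right in any word gives $\Gamma=\Gamma'\cup\Gamma'R_{v_j}$ and so $[\Gamma:\Gamma']\le 2$. This is both sharper (index at most $2$ rather than merely finite) and unconditional, requiring no finite-covolume hypothesis on $\nye{V}$. What the paper's longer route buys is the geometric payload that the theorem is actually used for: the explicit verification that the doubled polytope is bounded and of finite volume is what makes the doubled configuration legitimately usable with the \StructThm{} (which needs $\<\hat\cC\>$ to act with finite covolume). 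Your index bound recovers that conclusion only when one already knows $\<\nye{V}\>$ has finite covolume---though in every application in the paper that is indeed the standing assumption, so your argument would suffice there as well.
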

\begin{proof}
Without loss of generality, let $j=1$.

$\<\nye{V}^1\><\<\nye{V}\>$ follows immediately as $\<\nye{V}^1\>=\<2,\dots,m,1.2,\dots,1.m\>$ consists exclusively of elements of $\<\nye{V}\>$. 

We aim to show that if $\nye{V}$ extends through Poincar\'{e} extension to a $\R^n$-bounded $\H^{n+1}$ polytope $P$ that touches $\R^n$ at finitely many cusps, then doubling about $1$ also yields such a polytope $P^1$. This will prove the result because the quotients of the volumes equals the index -- $\left[P^1\right]/\left[P\right]=\left[\<\nye{V}\>:\<\nye{V}^1\>\right]$. This is because each coset represents a ``copy" of $P$ that can be mapped into $P^1$ through members of $\<\nye{V}\>$, i.e. $P$ can be thought as the gluing-together of several copies of $P^1$, and specifically $\left[\<\nye{V}\>:\<\nye{V}^1\>\right]$-many. 

Let $j^+$ denote the interior of $v_j$ and let $j^-$ denote the interior of $\nye{j}$. Further, let $\sqcap_1=\bigcap\limits_{j=2}^mj^+$. The interior of $\nye{V}^1$ is 
\begin{align}
\sqcap_1\cap1.\sqcap_1=&(\sqcap_1\cap(1^+\cap1.\sqcap_1))\nonumber\\
            &\cup(\sqcap_1\cap(1\cap1.\sqcap_1))\nonumber\\
            &\cup(\sqcap_1\cap(1^-\cap1.\sqcap_1)),\label{intersection overall}\\
\sqcap_1\cap(1^+\cap1.\sqcap_1)=&((1^+\cap\sqcap_1)\cap(1^+\cap1.\sqcap_1))\nonumber\\
            &\cup((1\cap\sqcap_1)\cap(1^+\cap1.\sqcap_1))\nonumber\\
            &\cup((1^-\cap\sqcap_1)\cap(1^+\cap1.\sqcap_1)).\label{intersection part}
\end{align}

From the hypothesis, we have that $(1^+\cup1)\cap\sqcap_1$ is finite, and we further know that $1^-\cap1^+=\emptyset$ and is thus finite; thus, $\sqcap_1\cap1.\sqcap_1$ is the finite union of finite sets and is finite. 

Similarly, we can prove boundedness: we know that $(1^+\cup1)\cap\sqcup_1$ is bounded, so substituting \eqref{intersection part} into \eqref{intersection overall} as was implicitly done just above gets 

\begin{align}
\sqcap_1\cap1.\sqcap_1=&((1^+\cap\sqcap_1)\cap(1^+\cap1.\sqcap_1))\nonumber\\
            &\cup((1\cap\sqcap_1)\cap(1^+\cap1.\sqcap_1))\nonumber\\
            &\cup((1^-\cap\sqcap_1)\cap(1^+\cap1.\sqcap_1))\nonumber\\
            &\cup(\sqcap_1\cap(1\cap1.\sqcap_1))\nonumber\\
            &\cup(\sqcap_1\cap(1^-\cap1.\sqcap_1))
\end{align}

\noindent which is the finite union of bounded and empty sets, and hence is bounded. 
\end{proof}

Doubling, in certain cases, creates configurations that have clusters. However, this requires that the resultant configuration exclusively has angles of the form $\frac{\pi}{n}$, in order for there to exist a Coxeter diagram. Practically speaking: 

\begin{lemma}\label{whatcandouble}
If doubling about 1 generates a reflective group, 
then in $\nye{V}$'s Coxeter diagram, 1 is joined to other nodes exclusively by an even number of edges. (A thick line is considered to be an even number of edges.)
\end{lemma}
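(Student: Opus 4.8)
The plan is to prove the contrapositive: I will show that if node $1$ is joined to some other node $i$ by an \emph{odd} number of edges, then doubling about $1$ cannot produce a configuration admitting a Coxeter diagram, and hence cannot generate a reflective group. The starting observation is that, by \defref{doublingdef}, the doubled configuration $\nye{V}^1$ contains, for each $i\ge 2$, both $v_i$ and its reflection $v_iR_{v_1}$ about the wall $v_1$; the entire question therefore reduces to understanding the mutual position of this one pair relative to their original angle with $v_1$.

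First I would compute the Gram entry between $v_i$ and its image. Using the right-acting reflection formula of \defref{reflmatrix}, namely $v_iR_{v_1}=v_i+2\ip{v_i}{v_1}_Q v_1$, together with the normalization $\ip{v_i}{v_i}_Q=-1$ satisfied by all inversive coordinates, a one-line computation gives
\begin{align*}
\ip{v_iR_{v_1}}{v_i}_Q=-1+2\ip{v_i}{v_1}_Q^2 .
\end{align*}
When $v_1$ and $v_i$ meet at dihedral angle $\theta$, the Gram convention gives $\ip{v_i}{v_1}_Q=\cos\theta$, so the right-hand side equals $2\cos^2\theta-1=\cos 2\theta$. Thus $v_i$ and its double meet at angle $2\theta$: doubling exactly doubles the angle each wall makes with the mirror $v_1$, which is the analytic form of the familiar geometric fact that $v_i$ and $v_iR_{v_1}$ share the ridge $v_1\cap v_i$ and open symmetrically across $v_1$.

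Next I would invoke the defining property of a Coxeter polyhedron (\defref{Cox}): a reflective group obtained from $\nye{V}^1$ forces every pair of intersecting walls to meet at a submultiple $\pi/k$ of $\pi$. If node $1$ is joined to $i$ by a solid line, then by the diagram convention $\theta=\pi/n$ with edge count $n-2$, and the doubled angle is $2\pi/n$; this is of the form $\pi/k$ if and only if $n=2k$ is even, i.e.\ if and only if $n-2$ is even. Hence an odd number of edges yields the inadmissible angle $2\pi/n$ and no Coxeter diagram can exist, which is exactly the contrapositive. I would then dispatch the remaining connection types as harmless and consistent with the stated convention: a thick line ($\ip{v_i}{v_1}_Q=\pm1$) gives $\ip{v_iR_{v_1}}{v_i}_Q=1$, so the double is again tangent/parallel and admissible (hence counted as even); no line ($\ip{v_i}{v_1}_Q=0$, $\theta=\pi/2$) gives $v_iR_{v_1}=v_i$, so no new wall is created; and a dashed line ($\ip{v_i}{v_1}_Q=\cosh d>1$) gives $2\cosh^2 d-1=\cosh 2d>1$, so the double is merely disjoint from $v_i$ and again admissible.

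The main obstacle I anticipate is the bookkeeping around orientations and sign conventions: the Gram entry pins down the angle only up to replacing $\theta$ by $\pi-\theta$, and the tangent (thick-line) case is a parabolic, ``angle $0$'' degeneration of the solid-line formula, so I must argue carefully that these ambiguities never convert an even edge count into an odd one or conversely. Once the angle-doubling identity $2\cos^2\theta-1=\cos 2\theta$ is established, the arithmetic fact that $2\pi/n$ is a submultiple of $\pi$ precisely when $n$ is even is immediate, and the case analysis above closes the lemma.
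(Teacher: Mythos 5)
The paper states \lemref{whatcandouble} without any proof at all (it is followed immediately by the remark that it ``is used to remove mirrors in a configuration from consideration for doubling''), so there is no argument of the paper's to compare yours against; you have supplied the missing proof, and your central computation is the right one. The identity $\ip{v_iR_{v_1}}{v_i}_Q=-1+2\,\ip{v_i}{v_1}_Q^2$ is correct under the paper's conventions ($R_{v_1}=I+2Qv_1^Tv_1$ acting on the right, $\ip{v_i}{v_i}_Q=-1$), and the conclusion that $v_i$ and $1.v_i$ meet at twice the dihedral angle between $v_i$ and $v_1$ --- so that $\theta=\pi/n$ survives doubling as an admissible Coxeter angle precisely when $n$ is even --- is exactly the content of the lemma. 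Your dispatching of the tangent, orthogonal, and disjoint cases is also correct, and considering only the pairs $(v_i,\,1.v_i)$ is enough, since the lemma asserts only a necessary condition on the edges at node $1$.

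The one step you should tighten is the last inference of your contrapositive: ``cannot produce a configuration admitting a Coxeter diagram, and hence cannot generate a reflective group.'' Failure of $\nye{V}^1$ to be a Coxeter polyhedron does not by itself prevent the group it generates from being a reflection group. Take two mirrors meeting at angle $\pi/3$ and double about the first: the walls $v_2$ and $1.v_2$ meet at $2\pi/3$, yet $\left\langle R_{v_2},R_{v_1}R_{v_2}R_{v_1}\right\rangle$ is the full dihedral group of order $6$ --- a perfectly good discrete reflection group; what fails is only that the doubled wedge is no longer a fundamental chamber for it. So under a literal group-theoretic reading of ``generates a reflective group,'' the lemma itself would be false, and no proof could close that gap. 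The reading the paper intends --- made explicit in the sentence preceding the lemma, namely that the resultant configuration must have all angles of the form $\pi/n$ in order for a Coxeter diagram to exist --- is that $\nye{V}^1$ itself be the wall system of a Coxeter polyhedron, i.e.\ a fundamental domain for the group it generates. Your angle-doubling argument proves the lemma completely under that reading; you should simply phrase the hypothesis in that form rather than route it through ``reflective,'' which your computation does not (and cannot) control.
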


\lemref{whatcandouble} is used to remove mirrors in a configuration from consideration for doubling. Of course, it is also necessary that the resultant configuration generates a group of mirrors commensurate to the original group. This is the primary function of \thmref{validatedoubling}. 

The principle of doubling was successfully applied to obtain clusters, and hence commensurability classes yielding packings, described in full in \secref{pack13} and \secref{pack35}. 

\subsection{Beyond Doubling}\label{beyond}

Unfortunately, doubling is not a panacea, and there remain instances that are left unresolved by doubling. Here we describe instances where the Coxeter diagram obtained by Vinberg's algorithm, either in \cite{Vin72} or \cite{Mcl10}, lack a cluster, as do all of their doublings, but there exist other subgroups that admit packings through their diagrams. We begin our siege on these other diagrams with a lemma useful in proofs to come. 

\begin{lemma}\label{circint}
The $n$-dimensional oriented hypersphere specified by $\left(\hat{b},b,bz\right)$ has interior given by $x=(x_i)_{i=1}^n\in\hat{\R^n}$\footnote{In the case of $x=\infty$ we can safely view $x$ as the $n$-tuple with each entry equal to $\infty$.} satisfying $$0<
\begin{cases}
bz\cdot x-\half\hat{b}&b=0\\
\left(\frac{1}{b^2}-\abs{z-x}^2\right)\sign b&b\neq0
\end{cases}$$ where $v\cdot w=vI_nw^T$, i.e. the ``standard" dot product. 
\end{lemma}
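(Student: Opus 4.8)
The plan is to split on whether the bend $b$ vanishes and, in each branch, to reduce the claimed inequality to the interior condition of \defref{orientedspheres}. The unifying device is to represent the test point $x\in\R^n$ by the \emph{null vector} $p_x=(\abs{x}^2,1,x)$, which satisfies $\ip{p_x}{p_x}_Q=0$ by a direct computation with $Q$, and to show that both branches of the displayed formula are, up to a positive scalar, equal to $-\ip{v}{p_x}_Q$, where $v=(\hat b,b,bz)$ is the inversive coordinate of the hypersphere (\defref{invcoords}). The whole statement then collapses to the single clean assertion that $x$ lies in the interior if and only if $\ip{v}{p_x}_Q<0$.

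\textbf{The generic case $b\neq0$.} Here $v$ is a genuine sphere of signed radius $r=1/b$ centered at $z$, so $\tfrac1{b^2}=r^2$ and $\sign b=\sign r$. I would expand $\ip{v}{p_x}_Q=\tfrac12\big(b\abs{x}^2+\hat b\big)-bz\cdot x$ and eliminate $\hat b$ via the normalization $\ip{v}{v}_Q=-1$ of \cite{Kon17} (which forces $\hat b=b\abs{z}^2-1/b$); this collapses the expression to $\ip{v}{p_x}_Q=-\tfrac b2\big(\tfrac1{b^2}-\abs{z-x}^2\big)$. Consequently $\big(\tfrac1{b^2}-\abs{z-x}^2\big)\sign b=-\tfrac{2}{\abs{b}}\ip{v}{p_x}_Q$, so the claimed inequality is exactly $\ip{v}{p_x}_Q<0$. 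It then remains to match this with \defref{orientedspheres}: writing $d=\abs{z-x}$ and factoring $r^2-d^2=(r-d)(r+d)$, the condition $(r-d)\sign r>0$ of \defref{orientedspheres} and the condition $(r^2-d^2)\sign r>0$ describe the same set once one accounts for the sign of $r+d$, i.e. for the orientation encoded by $\sign b$, with the convention that a negative bend orients the interior as the unbounded component containing $\infty$. The point $x=\infty$ is handled by the footnote convention, giving $\abs{z-x}^2=\infty$ so that $\infty$ is interior precisely when $b<0$, as expected.

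\textbf{The degenerate case $b=0$.} Now the hypersphere is a hyperplane $H$, and I would invoke \lemref{planefacts}: by \eqref{hyperplane1} the last block of $v$ is the unit normal, $bz=\hat n$, and by \eqref{hyperplane2} we have $\hat b=2\abs{P}$, where $P$ is the point of $H$ nearest the origin. Substituting $b=0$ into the inner-product expression gives $\ip{v}{p_x}_Q=\tfrac12\hat b-\hat n\cdot x=\abs{P}-\hat n\cdot x$, so once more $\ip{v}{p_x}_Q<0$ reads $\hat n\cdot x-\tfrac12\hat b>0$, the stated formula. Geometrically this is the open half-space $\{\hat n\cdot x>\abs{P}\}$ on the far side of $H$ from the origin; since \defref{orientedspheres} places the interior of the approximating sphere $S_r$ on the side of $P$ opposite the origin, the limit $r\to\infty$ recovers exactly this half-space, completing the identification.

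\textbf{Main obstacle.} The delicate point is the sign/orientation bookkeeping in the case $b\neq0$: the naive factorization matches \defref{orientedspheres} cleanly only for positive radius, and one must track carefully how a negative bend reverses orientation (turning the ``interior'' into the unbounded region) so that the quadratic inequality and the linear condition of \defref{orientedspheres} cut out the same set. The limit $b\to0$ is conceptually routine given \lemref{planefacts}, but it also demands care at $x=\infty$, where the point sits on the hyperplane's boundary and the formula degenerates.
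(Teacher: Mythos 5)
Your proposal is correct, but it takes a noticeably different route from the paper's proof, which is a direct unwinding of the definitions. For $b\neq0$ the paper simply splits into $b>0$ (interior is the usual open ball, and $d<r$ is squared to $r^2-d^2>0$) and $b<0$ (interior is declared to be the complement of the closed ball $\overline{B_z(1/\abs{b})}$, so $d>\abs{r}$), and observes both are captured by the single signed inequality; for $b=0$ it uses \lemref{planefacts} to write $\hat{n}=bz$ and $P=\half\hat{b}\hat{n}$, takes the interior of a plane through the origin to be $\{\hat{n}\cdot x>0\}$, and translates by $P$. You instead route everything through the null vector $p_x=(\abs{x}^2,1,x)$ and the identity $\ip{v}{p_x}_Q=\half\left(b\abs{x}^2+\hat{b}\right)-bz\cdot x$, using the normalization $\ip{v}{v}_Q=-1$ to collapse both branches to the single criterion $\ip{v}{p_x}_Q<0$; your algebra here is right (indeed $\ip{v}{p_x}_Q=-\tfrac{b}{2}\left(\tfrac{1}{b^2}-\abs{z-x}^2\right)$ when $b\neq0$, and $\ip{v}{p_x}_Q=\abs{P}-\hat{n}\cdot x$ when $b=0$). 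This packaging is absent from the paper --- it never forms $\ip{v}{p_x}_Q$ --- and it buys a cleaner, reusable statement: a point is interior exactly when its null vector pairs negatively with the sphere's inversive coordinate, which explains the otherwise ad hoc appearance of $\half\hat{b}$ and $\sign b$. What it does not buy is any savings on the geometric side: your closing steps (factoring $r^2-d^2$, the orientation convention for negative bend, and identifying the half-space via the limiting spheres of \lemref{planefacts}) are precisely the content of the paper's direct argument, so your proof is in effect a superset of theirs with one extra ingredient (the normalization from \cite{Kon17}). One wrinkle you handle better than the paper: as literally written, \defref{orientedspheres} makes $(r-\abs{z-x})\sign r>0$ vacuous when $r<0$; both proofs must use the intended reading that negative bend selects the unbounded component, and you state that convention explicitly rather than leaving it implicit.
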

\begin{proof}
\textbf{Case $b=0$.} We know that the hyperplane has normal vector given by $\hat{n}=bz$, from \lemref{planefacts}. We first consider the case $\hat{b}=0$, i.e. the codimension-1 hyperplane passes through the origin (by \lemref{planefacts}, as in that context we would have $\abs{P}=0\implies P=0$). The half-space ``on the same side as" $\hat{n}$ is characterized by $\hat{n}\cdot x>0$. Consider now the case $\hat{b}\neq0$. Consider a translation of space sending $P\mapsto0$ and $x\mapsto x'$. Then it is clear that we must have $\hat{n}\cdot x'>0$ in this mapping of space (as the plane now passes through the origin). Of course, this mapping is simply defined as $x\mapsto x'=x-P$. $P$ can be computed as $\half\hat{b}\hat{n}$, since $P$ is the nearest point to the origin and hence the segment connecting 0 and $P$ is perpendicular to the plane, thus parallel to $\hat{n}$. Therefore, we have $$0<\hat{n}\cdot\left(x-\half\hat{b}\hat{n}\right)=\hat{n}\cdot x-\half\hat{b}\hat{n}\cdot\hat{n}=bz\cdot x-\half\hat{b}.$$

\textbf{Case $b\neq0$.} If $b>0$ then we are looking for the standard notion of the interior of a hypersphere, i.e. the points within $r$ of the center. Namely, $$r=\frac{1}{b}>\abs{z-x}>0\implies\frac{1}{b^2}>\abs{z-x}^2\implies\frac{1}{b^2}-\abs{z-x}^2>0.$$ If $b<0$ then we are looking for the complement of $\overline{B_z\left(1/\abs{b}\right)}$ (the closed ball centered at $z$ with radius $1/\abs{b}$), i.e. $x$ must satisfy $$0<r=-\frac{1}{b}<\abs{z-x}\implies\frac{1}{b^2}<\abs{z-x}^2\implies\frac{1}{b^2}-\abs{z-x}^2<0.$$ We now see that both cases of $b$'s sign are captured by $\left(\frac{1}{b^2}-\abs{z-x}^2\right)\sign b>0$.
\end{proof}

\begin{Def}
In the context of $\nye{V}=\{v_i\}_{i=1}^m$ a configuration of inversive coordinates generating $\<\nye{V}\>\le\isom(\H^{n+1})$ of finite index where $\H^{n+1}$ is viewed as arising from $-dx_0^2+\sum\limits_{i=1}^nx_i^2$, and for $1\le j\le m$, we write $(d.n.j)$ to denote the application of \lemref{circint} to $v_j$. 
\end{Def}

We now give an example of a fruitful result that does not rely on doubling. 

\subsubsection{$d=3,n=3$}\label{pack33}

We obtain the following Coxeter diagram

\begin{center}
\begin{tikzpicture}
\coordinate (2) at (0,0);
\coordinate (4) at (2,0);
\coordinate (7) at (1,2);
\coordinate (6) at (1,1);
\coordinate (1) at (2,1);
\coordinate (3) at (3,1);
\coordinate (5) at (2.5,2);

\draw[ultra thick] (2) -- (7);
\draw[ultra thick] (1) -- (3);
\draw[thick, double distance=2pt] (1) -- (5);
\draw[thick, double distance=2pt] (3) -- (5);
\draw[dashed] (1) -- (6);
\draw[dashed] (2) -- (4);
\draw[dashed] (2) -- (6);
\draw[dashed] (3) -- (4);
\draw[dashed] (5) -- (7);

{    \filldraw[fill=white] (1) circle (2pt) node[below] {\scriptsize 5};}
{    \filldraw[fill=red] (2) circle (2pt) node[below] {\scriptsize 6};}
{    \filldraw[fill=white] (3) circle (2pt) node[right] {\scriptsize 7};}
{    \filldraw[fill=red] (4) circle (2pt) node[below] {\scriptsize 8};}
{    \filldraw[fill=white] (5) circle (2pt) node[above] {\scriptsize 9};}
{    \filldraw[fill=red] (6) circle (2pt) node[above] {\scriptsize 10};}
{    \filldraw[fill=red] (7) circle (2pt) node[above] {\scriptsize 11};}

\end{tikzpicture}
\end{center}

\noindent arising from the configuration 

\begin{align}
\begin{array}{c|cccc|l}\label{coords33}
 & \hat{b} & b & bx & by & \text{def'd as:}\\\hline
5& -2 & 0 & -\frac{\sqrt{3}}{2} & -\frac{1}{2} & \nye{3} \\
6& 0 & 0 & -\frac{1}{2} & \frac{\sqrt{3}}{2}& 1.2 \\
7& 6 & 0 & \frac{\sqrt{3}}{2} & \frac{1}{2}& 3.1.2.1.2.1.3 \\
8& \sqrt{2} & \frac{\sqrt{2}}{2} & \frac{\sqrt{6}}{2} & -\frac{\sqrt{2}}{2} & 1.3.4\\
9& \sqrt{2} & \frac{\sqrt{2}}{2} & \frac{\sqrt{6}}{2} & \frac{\sqrt{2}}{2} & 3.4\\
10& 5 \sqrt{2} & \frac{\sqrt{2}}{2} & \sqrt{6} & 0 & 3.1.2.1.3.4\\
11& 2 \sqrt{3} & 0 & \frac{1}{2} & -\frac{\sqrt{3}}{2} & 1.2.3.1.2
\end{array}
\end{align}

\begin{lemma}
\eqref{coords33} has empty interior in $\R^2$, and extends by Poincar\'{e} extension to a hyperbolic polytope of finite volume. 
\end{lemma}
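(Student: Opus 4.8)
The statement splits into two essentially independent claims, which I would attack with different tools.

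\emph{Empty interior.} The plan is to turn each of the seven walls into the region $i^+\subseteq\R^2$ it cuts out, via \lemref{circint}, and then to exhibit two whose interiors are already disjoint. Walls $5$ and $7$ both have $b=0$, so the first case of \lemref{circint} describes their interiors as half-planes $\{x:bz\cdot x-\half\hat b>0\}$; substituting the corresponding rows of \eqref{coords33} gives $5^+=\{\sqrt3\,x_1+x_2<2\}$ and $7^+=\{\sqrt3\,x_1+x_2>6\}$. (That these are parallel lines, meeting only at $\infty$, is precisely the thick edge joining nodes $5$ and $7$ in the Coxeter diagram.) No point satisfies both inequalities, so $5^+\cap7^+=\emptyset$ and hence the interior $\bigcap_{i=5}^{11}i^+\subseteq5^+\cap7^+$ of the configuration is empty, which is the first claim.

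\emph{Finite volume.} Here the plan is to apply Vinberg's finite-volume criterion to the Coxeter polytope $P$ that the seven walls bound in $\H^3$ (whose ideal boundary is $\R^2\cup\{\infty\}$). First I would form the Gram matrix $VQV^T$ (\defref{Gram}) of \eqref{coords33}, check that it has rank $4$ and inertia $(1,3)$ matching $Q$, so that the walls genuinely bound a polytope in $\H^3$, and read off from it (equivalently from the diagram, \defref{Cox}) that every dihedral angle is $\pi/2$, $\pi/4$, or a tangency/disjointness, so $P$ is Coxeter. Next I would enumerate the faces of $P$ in $\overline{\H^3}$ as the subsets of walls whose Gram submatrix is elliptic of rank $3$ (an ordinary vertex) or parabolic of rank $2$ (a cusp). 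The two structurally important faces are the ordinary vertex $\{8,9,10\}$ -- the three hemispheres are mutually orthogonal, giving vertex figure $A_1\times A_1\times A_1$ -- and the single cusp at $\infty$ carried by the four vertical walls $\{5,6,7,11\}$, which form two orthogonal pairs of parallels ($5\parallel7$ and $6\parallel11$, the pairs mutually orthogonal) and hence the parabolic diagram $\widetilde A_1\times\widetilde A_1$ of rank $2$, a legitimate ideal vertex. Finally I would treat the remaining faces -- those pairing a hemisphere with vertical walls -- the same way and verify the completeness half of Vinberg's criterion: every edge (rank-$2$ elliptic subdiagram) terminates in two vertices, each ordinary or a cusp. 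Finite volume follows.

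\emph{Main obstacle and an alternative.} The crux is the second claim, and inside it the completeness check: one must rule out a face of $P$ escaping to infinity in a direction that is neither an ordinary vertex nor a genuine cusp, which would force infinite volume. I expect the cleanest route is to organize this entirely around the Coxeter diagram, using the already-computed orthogonality/$\pi/4$/tangency/disjointness pattern to decide which triples of walls actually meet. An alternative to Vinberg's criterion exploits the ``def'd as'' column of \eqref{coords33}: it presents each $v_i$ as a word in the reflections generating the reflective -- hence finite-covolume -- group $\Gamma_0$ produced by Vinberg's algorithm for $-3x_0^2+x_1^2+x_2^2+x_3^2$, so that $\<v_5,\dots,v_{11}\>\le\Gamma_0$; a finite-index bound, in the spirit of the volume-ratio argument in \thmref{validatedoubling}, would then give $\mathrm{vol}(P)<\infty$ directly, trading the vertex analysis for a proof that the index is finite.
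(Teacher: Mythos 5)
Both halves of your argument are correct, but on the second (and, as you say, harder) half you take a genuinely different route from the paper --- and your route is the more solid one. For the empty-interior claim the paper does the same kind of thing as you, only with a different pair of walls: it intersects the interior conditions of walls $5$ and $10$ (its inequalities (3.3.5) and (3.3.10)) and derives a contradiction by cases on the sign of $x_1$; your pair $\{5,7\}$, two parallel lines whose interiors face away from each other, gives the same conclusion with no case split. For finite volume, however, the paper's entire argument is one sentence: (3.3.10) bounds $x_1,x_2$, so ``the intersection of the respective Poincar\'e extensions is bounded and does not meet the boundary of $\H^3$,'' hence finite volume. Taken literally this does not suffice. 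Since every off-diagonal Gram entry of \eqref{coords33} is nonnegative (in the paper's convention $\ip{v}{v}_Q=-1$, this is the outward-normal convention), the fundamental chamber is the intersection of the \emph{exteriors} of the seven interiors from \lemref{circint}; concretely it is the chimney lying over the rectangle cut out by the vertical walls $5,6,7,11$ and above the hemispheres $8,9,10$. That region is unbounded, and its closure \emph{does} meet $\partial\H^3$ --- exactly at the cusp $\infty$ that your parabolic subdiagram $\{5,6,7,11\}$ detects. What finiteness of volume actually requires is that the union of the interiors cover $\R^2\cup\{\infty\}$ up to finitely many points, i.e.\ that the three hemispheres cover that rectangle; the paper never checks this, and indeed its two observations hold verbatim for the sub-configuration $\{5,7,10\}$, whose polytope has infinite volume. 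Your Vinberg-criterion enumeration verifies precisely the right condition: the vertex $\{8,9,10\}$ and the cusp $\{5,6,7,11\}$ are correct as you describe them (the three hemispheres are mutually orthogonal and do meet at a point of $\H^3$; the four vertical walls form two orthogonal pairs of parallels), and the remaining bookkeeping you flag as the crux is real but routine --- there are additional ordinary vertices such as $\{5,6,9\}$, $\{6,7,9\}$, $\{5,11,8\}$, $\{7,11,10\}$, and $\{5,8,9\}$, after which completeness closes the argument. So your approach costs an enumeration the paper avoids, but it is a complete proof, whereas the paper's shortcut has a genuine gap; your alternative via finite index in the Vinberg group is also viable and runs on the same mechanism as the volume-ratio argument in \thmref{validatedoubling}.
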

\begin{proof}
We first show that the configuration has empty interior, by supposing towards contradiction that $x_1\ge0$: 

\begin{align*}
1-\frac{\sqrt{3}}{2}x_1-\half x_2&>0\tag{3.3.5}\\
x_2>2+\sqrt{3}x_1&\ge2\\
\label{ineq3310} 2-(2\sqrt{3}-x_1)^2-x_2^2&>0\tag{3.3.10}\\
2>(2\sqrt{3}-x_1)^2+x_2^2&\ge x_2^2\\
&\ge2^2,
\end{align*}

\noindent a contradiction. Hence in the supposed mutual interiors of the configuration, $x_1<0$. However, again following \eqref{ineq3310}, we find 

\begin{align*}
2>(2\sqrt{3}-x_1)^2+x_2^2&\ge (2\sqrt{3}-x_1)^2\\
&>(2\sqrt{3})^2,
\end{align*}

\noindent a further contradiction. Hence no point $(x_1,x_2)\in\R^2$ lies in the mutual interior of the specified configuration. 

\eqref{ineq3310} also gives bounds for each coordinate: $2\sqrt{3}-\sqrt{2}<x_1<2\sqrt{3}+\sqrt{2}$ and $\abs{x_2}<\sqrt{2}$. Since the intersection of the respective Poincar\'{e} extensions of the circles is bounded and does not meet the boundary of $\H^3$, it must be of finite volume. 
\end{proof}

This admits a packing through e.g. $\{6\}$, the cluster consisting just of the vector $\begin{pmatrix} 0 & 0 & -\frac{1}{2} & \frac{\sqrt{3}}{2} \end{pmatrix}$. This was uncovered by analyzing by hand the orbit of $\{1,2,3,4\}$ acting on itself. 

\begin{figure}[H]
    \centering
    \includegraphics[scale=.5]{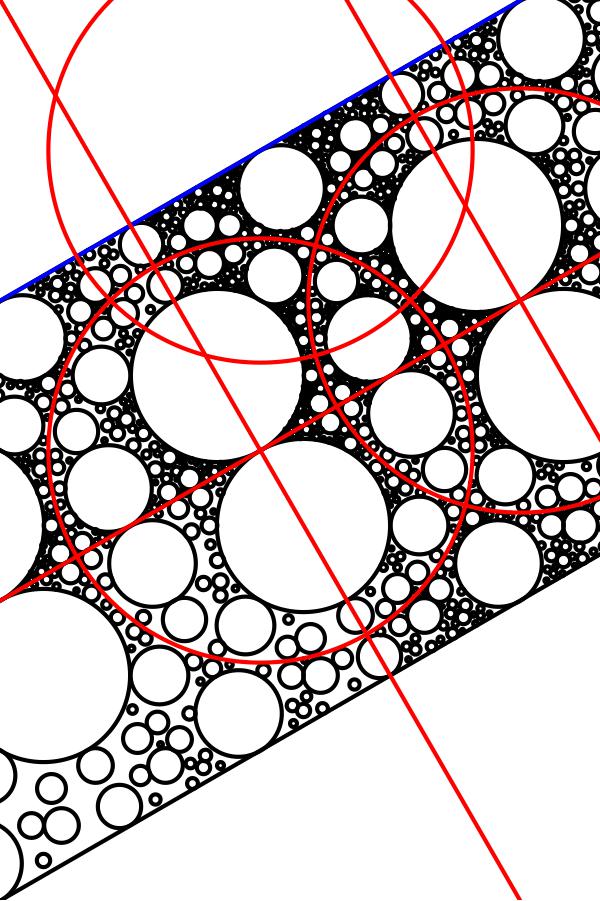}
    \caption{A packing arising from cluster $\{6\}$ in \secref{pack33}.}
\end{figure}

\subsubsection{$\hat{Bi}(3)$}\label{bi3fix}

\cite{KN18} includes the following useful fact: 

\begin{lemma}\label{unfold}
The Coxeter diagram for $\hat{Bi}(3)$,
\begin{center}
\begin{tikzpicture}
\coordinate (one) at (0,0);
\coordinate (two) at (1,0);
\coordinate (three) at (2,0);
\coordinate (four) at (3,0);

\draw[thick, double distance=2.5pt] (one) -- (two);
\draw[thick, double distance=0.3pt] (one) -- (two);
\draw[thick] (two) -- (three);
\draw[thick] (three) -- (four);

{    \filldraw[fill=white] (one) circle (2pt) node[above] {\scriptsize 1};}
{    \filldraw[fill=white] (two) circle (2pt) node[above] {\scriptsize 2};}
{    \filldraw[fill=white] (three) circle (2pt) node[above] {\scriptsize 3};}
{    \filldraw[fill=white] (four) circle (2pt) node[above] {\scriptsize 4};}

\end{tikzpicture}
,\end{center}
admits the subgroup corresponding to 
\begin{center}
\begin{tikzpicture}
\coordinate (one) at (0,0);
\coordinate (two) at (1,0);
\coordinate (three) at (2,0);
\coordinate (four) at (3,0);
\coordinate (five) at (4,0);

\draw[ultra thick] (one) -- (two);
\draw[thick, double distance=2.5pt] (two) -- (three);
\draw[thick, double distance=0.3pt] (two) -- (three);
\draw[thick] (three) -- (four);
\draw[ultra thick] (four) -- (five);

{    \filldraw[fill=red] (one) circle (2pt) node[above] {\scriptsize $a$};}
{    \filldraw[fill=white] (two) circle (2pt) node[above] {\scriptsize 1};}
{    \filldraw[fill=white] (three) circle (2pt) node[above] {\scriptsize 2};}
{    \filldraw[fill=white] (four) circle (2pt) node[above] {\scriptsize 3.1};}
{    \filldraw[fill=red] (five) circle (2pt) node[above] {\scriptsize 4};}
\end{tikzpicture}
\end{center}
for $a=(((3.2).1).4).((3.2).1)=3.2 .3 .1 .3 .2 .3 .4 .3 .2 .3 .\nye{1}=(3.2.3.1).4.(3.2).\nye{1}$.
\end{lemma}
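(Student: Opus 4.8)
The statement bundles two assertions: that the three displayed words all name a single oriented mirror $a$, and that the five mirrors $\{a,1,2,3.1,4\}$ span a finite-covolume reflection subgroup of $\hat{Bi}(3)$ realizing the drawn diagram. The plan is to settle the word identity purely formally from \lemref{refrelation} (the conjugation rule, which in the dot notation reads $(a.b).c=a.b.a.c$) together with the involution facts $a.a.b=b$ and $vR_v=-v$ (equivalently $\nye{x}=-x$ in coordinates), and then to read the diagram off an explicit Gram matrix built from the known roots of $\hat{Bi}(3)$.

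For the identity, I would abbreviate $w:=(3.2).1=3.2.3.1$. Two applications of \lemref{refrelation} supply the reflections that drive the computation: $R_{3.2}=R_3R_2R_3$, and hence $R_w=R_{3.2}R_1R_{3.2}=R_3R_2R_3R_1R_3R_2R_3$. Applying \lemref{refrelation} once more with mirror $w$ gives $R_{w.4}=R_wR_4R_w$, so that
\[
a=(w.4).w=v_wR_{w.4}=v_wR_wR_4R_w.
\]
Since reflecting $w$ in itself reverses its orientation, $v_wR_w=-v_w$, whence $a=-v_wR_4R_w$. Substituting $v_w=v_1R_3R_2R_3$ and $R_w=R_3R_2R_3R_1R_3R_2R_3$ and reading the resulting product right-to-left reproduces the eleven-letter word $3.2.3.1.3.2.3.4.3.2.3.\nye{1}$; regrouping the very same product as $R_{3.2}$, then $R_4$, then $R_w$ acting on $\nye{1}$ yields the factored form $(3.2.3.1).4.(3.2).\nye{1}$. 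The only moves used are the cancellations $R_iR_i=I$, so all three expressions coincide.

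To identify the subgroup and its diagram, I would substitute the roots of $\hat{Bi}(3)$ into the words for $a$ and $3.1$ (the mirrors $1,2,4$ being unchanged), obtain their inversive coordinates, and form the $5\times5$ Gram matrix. The entries to confirm are exactly those encoded by the path $a$--$1$--$2$--$3.1$--$4$: $a$ tangent to $1$ (the ultra-thick edge), the original $1$--$2$ bond preserved, $2$ tangent to $3.1$ and $3.1$ tangent to $4$, and every remaining pair orthogonal. That these five mirrors bound a finite-volume Coxeter polytope---so that the subgroup they generate sits at finite index inside $\hat{Bi}(3)$---can be read from the same Gram data; this finite-index behavior under such unfolding moves is the content certified in general by \thmref{validatedoubling}. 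With the diagram in hand, \thmref{structthm} applies to the configuration.

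I expect the difficulty to be bookkeeping rather than conceptual: maintaining the right-associative, right-acting convention through an eleven-fold word, and above all pinning down the single geometrically decisive Gram entry showing that the manufactured mirror $a$ is \emph{tangent} to node $1$. That tangency is what produces a cluster node in a diagram where $\hat{Bi}(3)$ itself has none, and is therefore the crux that converts a non-packing configuration into one to which \thmref{structthm} applies.
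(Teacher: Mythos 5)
The first thing to note is that the paper itself gives no proof of this lemma: it is introduced with ``\cite{KN18} includes the following useful fact'' and left at that. So there is no in-paper argument to compare yours against, and your proposal has to stand on its own. Its algebraic half does. Writing $w=(3.2).1$, your chain $R_{3.2}=R_3R_2R_3$, $R_w=R_{3.2}R_1R_{3.2}$, $R_{w.4}=R_wR_4R_w$, together with $v_wR_w=-v_w$ (from $\ip{v}{v}_Q=-1$), gives $a=(w.4).w=-v_w R_4R_w=-v_1(R_3R_2R_3)R_4(R_3R_2R_3R_1R_3R_2R_3)$, and unpacking this under the right-acting, right-associative convention really does produce the eleven-letter word with the ``1'' in the fourth slot and the ``4'' in the eighth, i.e.\ $3.2.3.1.3.2.3.4.3.2.3.\nye{1}$, and regrouping gives $(3.2.3.1).4.(3.2).\nye{1}$. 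I checked the letter order, which is exactly where the convention tends to trip people up; it is correct. (One quibble: the moves used are \lemref{refrelation} and orientation reversal, not merely cancellations $R_iR_i=I$, but that does not affect the result.)

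The geometric half, however, contains a genuine gap beyond the deferred computation. The Gram-matrix verification is only a plan --- acceptable as such, though it is the actual content, since without it the decisive tangency between $a$ and node $1$ is unverified --- but the tool you invoke for finite index, \thmref{validatedoubling}, does not apply here. That theorem certifies the specific operation $\nye{V}^j=\nye{V}_j\cup j.\nye{V}_j$, and the configuration $\{a,1,2,3.1,4\}$ is \emph{not} a doubling of $\{1,2,3,4\}$: doubling about $3$ would yield the six mirrors $\{1,2,4,3.1,3.2,3.4\}$, not this five-mirror set with its eleven-letter word $a$. Indeed the paper files $\hat{Bi}(3)$ under ``Beyond Doubling'' precisely because all of its doublings fail to produce a cluster, so no appeal to \thmref{validatedoubling} can close this step. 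Finite covolume of the unfolded configuration needs its own argument --- in this paper's framework, the explicit check that the mirrors bound a finite-volume polytope via \lemref{circint} and the Gram data, as is carried out for every other ``beyond doubling'' configuration in \secref{hdpackingdata} --- or else one simply cites \cite{KN18} for the whole statement, which is all the paper does.
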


\begin{figure}[H]
    \centering
    \includegraphics[scale=.5]{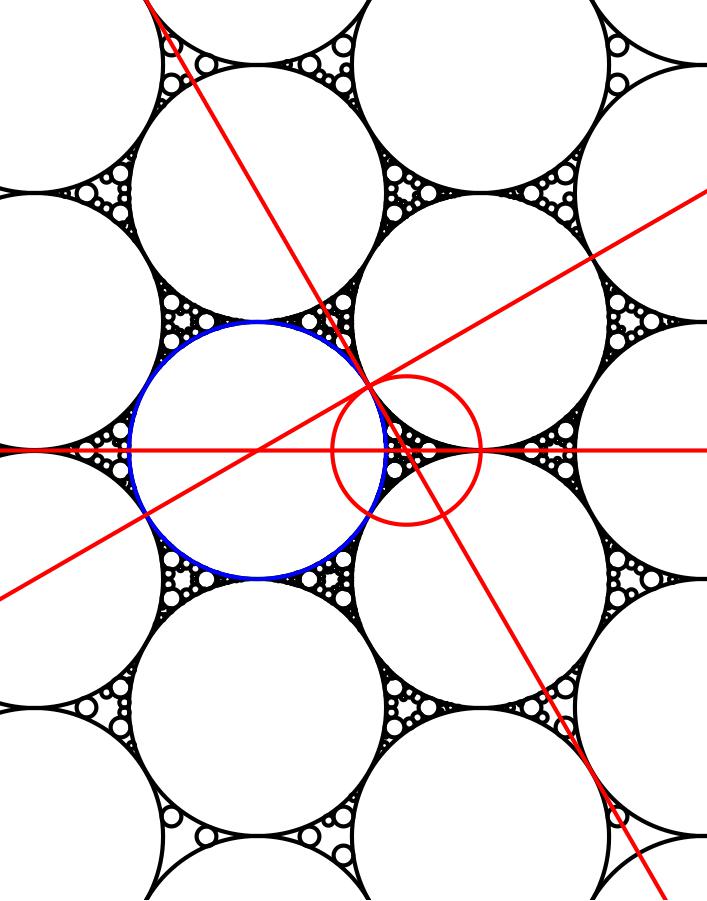}
    \caption{A packing arising from cluster $\{4\}$ in \secref{bi3fix}.}
\end{figure}

In each of the transformations in \secref{pack36}--\secref{pack313}, \lemref{unfold} was applied to a subdiagram having the form \begin{tikzpicture}
\coordinate (one) at (0,0);
\coordinate (two) at (1,0);
\coordinate (three) at (2,0);
\coordinate (four) at (3,0);

\draw[thick, double distance=2.5pt] (one) -- (two);
\draw[thick, double distance=0.3pt] (one) -- (two);
\draw[thick] (two) -- (three);
\draw[thick] (three) -- (four);

{    \filldraw[fill=white] (one) circle (2pt) node[above] {\scriptsize 1};}
{    \filldraw[fill=white] (two) circle (2pt) node[above] {\scriptsize 2};}
{    \filldraw[fill=white] (three) circle (2pt) node[above] {\scriptsize 3};}
{    \filldraw[fill=white] (four) circle (2pt) node[above] {\scriptsize 4};}

\end{tikzpicture} to obtain \begin{tikzpicture}
\coordinate (one) at (0,0);
\coordinate (two) at (1,0);
\coordinate (three) at (2,0);
\coordinate (four) at (3,0);
\coordinate (five) at (4,0);

\draw[ultra thick] (one) -- (two);
\draw[thick, double distance=2.5pt] (two) -- (three);
\draw[thick, double distance=0.3pt] (two) -- (three);
\draw[thick] (three) -- (four);
\draw[ultra thick] (four) -- (five);

{    \filldraw[fill=white] (one) circle (2pt) node[above] {\scriptsize $a$};}
{    \filldraw[fill=white] (two) circle (2pt) node[above] {\scriptsize 1};}
{    \filldraw[fill=white] (three) circle (2pt) node[above] {\scriptsize 2};}
{    \filldraw[fill=white] (four) circle (2pt) node[above] {\scriptsize 3.1};}
{    \filldraw[fill=white] (five) circle (2pt) node[above] {\scriptsize 4};}
\end{tikzpicture}. Another transformation was then applied to the remaining $m-4$ mirrors in the configuration such that the resultant configuration had a cluster and corresponded to a finite-volume $\H^{n+1}$ polytope, provably so by \lemref{circint}. 

For each configuration provided in \secref{hdpackingdata}, the numbering implicitly referenced comes from the configurations obtained through Vinberg's algorithm in \cite{Vin72} and \cite{Mcl10}, and listed at \url{math.rutgers.edu/~alexk/crystallographic}. 

\subsection{Unresolved questions}

This work has shown that for $1\le d\le 3$, the only cases not known to admit a commensurability class of packings are in $d=1$, $n>3$. Therefore the immediate next steps would be to consider those cases, as well as $d>3$. The data for $d>3$ can be found in \cite{Mcl13}. 

\appendix

\section{Integral Polyhedra}\label{integralpoly}
\subsection{Construction of Integral Polyhedra}
All known integral polyhedra which are not one of the four seed polyhedra can be constructed by gluings of seed polyhedra. 
For ease of notation, we let $t$ = tetrahedron and $s$ = square pyramid. $\mathscr{V}^+$ is a vertex gluing and $\mathscr{F}_n^+$ is a face gluing along an $n$-gon face. 
\begin{center}
    \begin{tabular}{ | l | l | l |}
    \hline
     Name & Planar Graph & Construction \\ \hline
    Triangular bipyramid & \includegraphics[width=2cm]{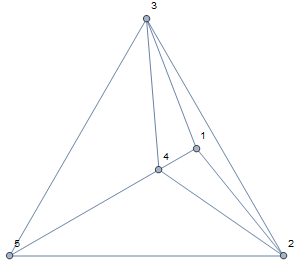} & $t \; \mathscr{F}^+ \; t$  \\ \hline
    6v8f\_1 & \includegraphics[width=1.8cm]{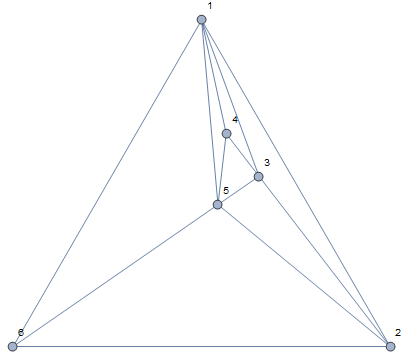} & $t\; \mathscr{F}^+ \; t \; \mathscr{F}^+ \;t$  \\ \hline
    Octahedron & \includegraphics[width=1.8cm]{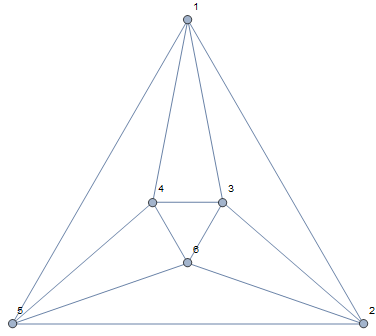} & $s \;\mathscr{F}_4^+\; s$ \\ \hline
    Elongated triangular pyramid & \includegraphics[width=1.8cm]{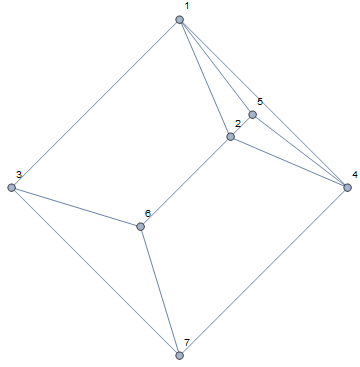} & $t \; \mathscr{V}^+ \; t \; \mathscr{F}^+ \; t$ \\ \hline
    7v8f\_6 & \includegraphics[width=1.8cm]{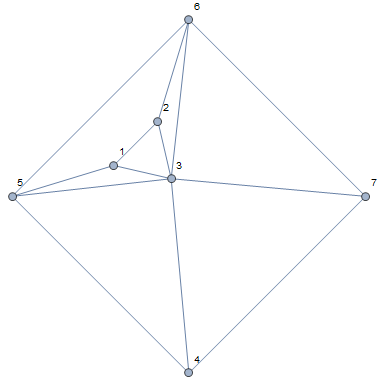} & $s \; \mathscr{F}_3^+ \; s$ \\ \hline
    7v8f\_7 & \includegraphics[width=1.8cm]{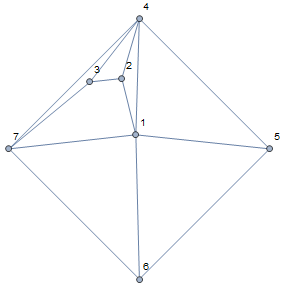} & $s \; \mathscr{F}_3^+ \; s$ \\ \hline
    7v10f\_1 & \includegraphics[width=1.8cm]{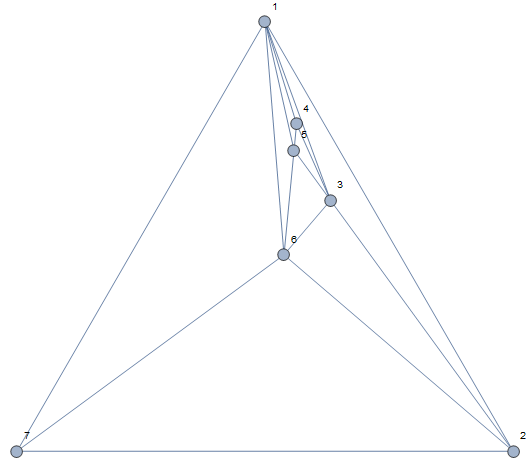} & $t \; \mathscr{F}^+ \; t \; \mathscr{F}^+ \; t$ \\ \hline
    7v10f\_2 & \includegraphics[width=1.8cm]{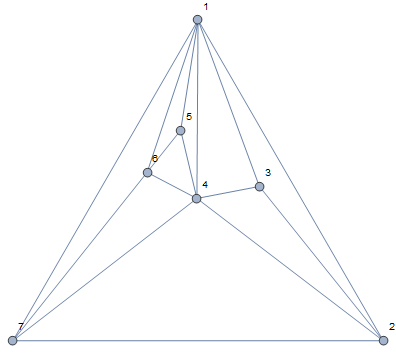} & $t \; \mathscr{F}^+ \; t \; \mathscr{F}^t \; t$\\ \hline
    7v10f\_3 & \includegraphics[width=1.8cm]{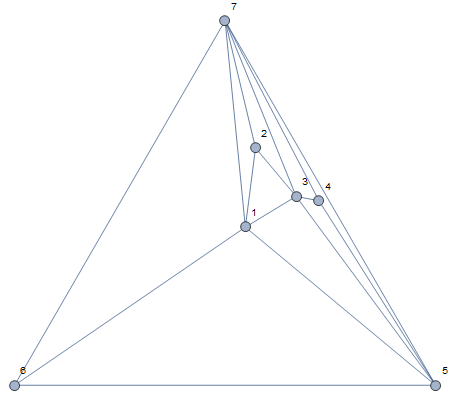} & $t \; \mathscr{F}^+ \; t \; \mathscr{F}^+ \; t$\\ \hline
    \end{tabular}
\end{center}

\subsection{Proving integrality} 
Most of the integral polyhedra which have been identified can be proven integral by \lemref{bendint}. The integral bend matrices associated to every such polyhedron can be found on our website. The only two which cannot be proven integral in this way are the hexagonal pyramid and 6v7f\_2. The bend matrices associated to these polyhedra are rational but not strictly integral, so we must verify that the fractional components of the bends can always be cleared by rescaling. This process is currently done in an ad hoc way by inspection of how the rational entries of a bend matrix change under multiplication with other bend matrices.

\section{Nonintegral Polyhedra}\label{nonintegralpoly}
All nonintegral polyhedra can be proved as such in one of two ways: all nonintegral-nonrational by \lemref{nonintpoly} and all nonintegral-rational by \lemref{qnonintpoly}. We give an example of each below. 

\subsubsection{Nonintegral-nonrational}

The following is the general form of any matrix in the cokernel of $V$ for the polyhedron 6v7f\_1.

\begin{center}
$\left(
\begin{array}{cccccc}
\alpha b_{12}-\beta b_{13} & b_{12} & b_{13} & \gamma b_{12}\delta b_{13} & \sqrt{2} \left(b_{12}+b_{13}\right)
   &\alpha \left(b_{12}+b_{13}\right) \\
\alpha b_{22}-\beta b_{23} & b_{22} & b_{23} & \gamma b_{22}\delta b_{23} & \sqrt{2} \left(b_{22}+b_{23}\right) &
  \alpha \left(b_{22}+b_{23}\right) \\
\alpha b_{32}-\beta b_{33} & b_{32} & b_{33} & \gamma b_{32}\delta b_{33} & \sqrt{2} \left(b_{32}+b_{33}\right) &
  \alpha \left(b_{32}+b_{33}\right) \\
\alpha b_{42}-\beta b_{43} & b_{42} & b_{43} & \gamma b_{42}\delta b_{43} & \sqrt{2} \left(b_{42}+b_{43}\right) &
  \alpha \left(b_{42}+b_{43}\right) \\
\alpha b_{52}-\beta b_{53} & b_{52} & b_{53} & \gamma b_{52}\delta b_{53} & \sqrt{2} \left(b_{52}+b_{53}\right) &
  \alpha \left(b_{52}+b_{53}\right) \\
\alpha b_{62}-\beta b_{63} & b_{62} & b_{63} & \gamma b_{62}\delta b_{63} & \sqrt{2} \left(b_{62}+b_{63}\right) &
  \alpha \left(b_{62}+b_{63}\right) \\
\end{array}
\right)$
\end{center}
$\alpha, \beta, \gamma, \delta$ are all irrational constants, allowing \lemref{nonintpoly} to be applied. 
\subsubsection{Nonintegral-Rational}
\begin{center}

7v9f\_8: $(4.9)^n$

$\left( 
\begin{array}{ccccccc}
 0 & 0 & \frac{25^{1-n}}{16}-\frac{25^n}{16} & 0 & 2\cdot 5^{-2 n}+\frac{25^n}{8}-\frac{31}{40} & -\frac{4}{5} & -\frac{7\cdot 5^{-2 n}}{16}+\frac{5^{2 n}}{16}+\frac{9}{40} \\
 0 & 0 & \frac{25^{1-n}}{16}-\frac{25^n}{16} & 0 & 2\cdot 5^{-2 n}+\frac{25^n}{8}-\frac{49}{40} & \frac{4}{5} & -\frac{7\cdot 5^{-2 n}}{16}+\frac{5^{2 n}}{16}-\frac{9}{40} \\
 0 & 0 & \frac{25^{1-n}}{18}-\frac{7\cdot 25^n}{18} & 0 & \frac{16\cdot 25^{-n}}{9}+\frac{7\cdot 25^n}{9}-\frac{23}{9} & 0 & \frac{7\cdot 25^n}{18}-\frac{7\cdot 25^{-n}}{18} \\
 0 & 0 & \frac{25^{1-n}}{64}-\frac{25^{n+1}}{64} & 0 & \frac{25^{-n}}{2}+\frac{25^{n+1}}{32}-\frac{23}{32} & -1 & -\frac{7\cdot 25^{-n}}{64}+\frac{25^{n+1}}{64}+\frac{9}{32} \\
 0 & 0 & 0 & 0 & 1 & 0 & 0 \\
 0 & 0 & \frac{25^{1-n}}{64}-\frac{25^{n+1}}{64} & 0 & \frac{5^{-2 n}}{2}+\frac{25^{n+1}}{32}-\frac{41}{32} & 1 & -\frac{7\cdot 25^{-n}}{64}+\frac{25^{n+1}}{64}-\frac{9}{32} \\
 0 & 0 & \frac{25^{1-n}}{18}-\frac{25^{n+1}}{18} & 0 & \frac{16\cdot5^{-2 n}}{9}+\frac{25^{n+1}}{9}-\frac{41}{9} & 0 & \frac{25^{n+1}}{18}-\frac{7\cdot25^{-n}}{18} \\
\end{array}
\right)$

\end{center}
The above matrix is the general form of $(4.9)^n$, where $4.9$ is the product of bend matrices associated with the 4th and 9th faces in the polyhedron 7v9f\_8. It has a number of entries which have denominators in the form $c^n$ and are thus unbounded.

\section{Corrections to \cite{Mcl13}}
\label{mcl fix}

The following corrections are stated in reference to Appendix F in \cite{Mcl13}:

\begin{itemize}
    \item In table F.2, vector $e_4$ should be $(2,0,0,-1)$, not $(1,0,0,-1)$.
    \item In table F.3, vector $e_4$ should be $(-1,1,0,0)$, not $(1,1,0,0)$.
    \item In table F.9, the self-product $(e,e)$ of $e_8$ should be 2, not 26.
    \item In table F.16, $e_3$ should be $(0,0,0,1)$ not $(0,0,1,-2)$; similarly $e_4$ should be $(33,0,0,1)$, not $(33,0,1,-2)$, since $m \equiv 1 \text{ (mod }4)$.
    \item In table F.17, vector $e_4$ should be $(39,0,-1,2)$, not $(33,0,-1,2)$.
The self-product $(e,e)$ of $e_3$ should be 78, not 66; similarly the self-product $(e,e)$ of $e_4$ should be 78, not 66.
\end{itemize}

\section{Integral and non-integral Bianchi packings}
\label{int and nonint bianchi}

The following is a complete list of all integral (145) and non-integral (224) crystallographic packings that arise from the extended Bianchi groups, referred to here as Bi($m$):\\

Integral:
\begin{itemize}
    \item Bi(1) : \{1\}, \{3\}
   
    \item Bi(2) : \{1\}, \{3\}
    
    \item Bi(5) : \{3\}, \{4\}, \{3,4\}
   
    \item Bi(6) : \{1\}, \{3\}, \{4\}, \{3,4\}
    
    \item Bi(7) : \{3\}, \{4\}, \{3,4\} 
    \item Bi(10) : \{1\}, \{3\}, \{4\}, \{7\}, \{8\}, \{9\}, \{1,7\}, \{3,4\}, \{3,8\}, \{3,9\}, \{4,8\}, \{4,9\}, \{8,9\}, \{3,4,8\}, \{3,4,9\}, \{3,8,9\}, \{4,8,9\}, \{3,4,8,9\} 
    \item Bi(11) : \{3\}, \{4\}, \{3,4\} 
    \item Bi(13) : \{3\}, \{4\}, \{9\}, \{10\}, \{3,4\}, \{3,9\}, \{3,10\}, \{4,9\}, \{4,10\}, \{9,10\}, \{3,4,9\}, \{3,4,10\}, \{3,9,10\}, \{4,9,10\}, \{3,4,9,10\} 
    \item Bi(14) : \{1\}, \{3\}, \{4\}, \{7\}, \{8\}, \{9\}, \{3,4\}, \{7,9\} 
    \item Bi(15) : \{3\}, \{4\}, \{7\}, \{8\}, \{3,4\}, \{3,7\}, \{3,8\}, \{4,7\}, \{4,8\}, \{7,8\}, \{3,4,7\}, \{3,4,8\}, \{3,7,8\}, \{4,7,8\}, \{3,4,7,8\} 
    \item Bi(17) : \{3\}, \{4\}, \{8\}, \{11\}, \{12\}, \{13\}, \{3,4\}, \{3,12\}, \{3,13\}, \{4,12\}, \{4,13\}, \{8,11\}, \{12,13\}, \{3,4,12\}, \{3,4,13\}, \{3,12,13\}, \{4,12,13\}, \{3,4,12,13\} 
    \item Bi(19) : \{3\}, \{4\}, \{3,4\} 
    \item Bi(21) : \{3\}, \{4\}, \{9\}, \{11\}, \{3,4\}, \{3,9\}, \{3,11\}, \{4,9\}, \{4,11\}, \{9,11\}, \{3,4,9\}, \{3,4,11\}, \{3,9,11\}, \{4,9,11\}, \{3,4,9,11\} 
    \item Bi(30) : \{1\}, \{3\}, \{4\}, \{8\}, \{9\}, \{10\}, \{11\}, \{3,4\}, \{8,11\} 
    \item Bi(33) : \{3\}, \{4\}, \{7\}, \{10\}, \{11\}, \{12\}, \{13\}, \{15\}, \{3,4\}, \{3,11\}, \{3,15\}, \{4,11\}, \{4,15\}, \{7,13\}, \{10,12\}, \{11,15\}, \{3,4,11\}, \{3,4,15\}, \{3,11,15\}, \{4,11,15\}, \{3,4,11,15\} 
    \item Bi(39) : \{3\}, \{4\}, \{9\}, \{10\}, \{3,4\}, \{9,10\} 
\end{itemize}

Non-integral:
\begin{itemize}
    \item Bi(10) : \{1,8\}, \{1,9\}, \{3,7\}, \{4,7\}, \{1,8,9\}, \{3,4,7\} 
    \item Bi(14) : \{1,8\}, \{1,9\}, \{3,7\}, \{3,8\}, \{3,9\}, \{4,7\}, \{4,8\}, \{4,9\}, \{3,4,7\}, \{3,4,8\}, \{3,4,9\}, \{3,7,9\}, \{4,7,9\}, \{3,4,7,9\} 
    \item Bi(17) : \{3,8\}, \{3,11\}, \{4,8\}, \{4,11\}, \{8,12\}, \{8,13\}, \{11,12\}, \{11,13\}, \{3,4,8\}, \{3,4,11\}, \{3,8,11\}, \{3,8,12\}, \{3,8,13\}, \{3,11,12\}, \{3,11,13\}, \{4,8,11\}, \{4,8,12\}, \{4,8,13\}, \{4,11,12\}, \{4,11,13\}, \{8,11,12\}, \{8,11,13\}, \{8,12,13\}, \{11,12,13\}, \{3,4,8,11\}, \{3,4,8,12\}, \{3,4,8,13\}, \{3,4,11,12\}, \{3,4,11,13\}, \{3,8,11,12\}, \{3,8,11,13\}, \{3,8,12,13\}, \{3,11,12,13\}, \{4,8,11,12\}, \{4,8,11,13\}, \{4,8,12,13\}, \{4,11,12,13\}, \{8,11,12,13\}, \{3,4,8,11,12\}, \{3,4,8,11,13\}, \{3,4,8,12,13\}, \{3,4,11,12,13\}, \{3,8,11,12,13\}, \{4,8,11,12,13\}, \{3,4,8,11,12,13\} 
    \item Bi(30) : \{1,9\}, \{1,10\}, \{1,11\}, \{3,8\}, \{3,9\}, \{3,10\}, \{3,11\}, \{4,8\}, \{4,9\}, \{4,10\}, \{4,11\}, \{9,11\}, \{10,11\}, \{1,9,11\}, \{1,10,11\}, \{3,4,8\}, \{3,4,9\}, \{3,4,10\}, \{3,4,11\}, \{3,8,11\}, \{3,9,11\}, \{3,10,11\}, \{4,8,11\}, \{4,9,11\}, \{4,10,11\}, \{3,4,8,11\}, \{3,4,9,11\}, \{3,4,10,11\}
    \item Bi(33) : \{3,7\}, \{3,10\}, \{3,12\}, \{3,13\}, \{4,7\}, \{4,10\}, \{4,12\}, \{4,13\}, \{7,11\}, \{7,12\}, \{7,15\}, \{10,11\}, \{10,13\}, \{10,15\}, \{11,12\}, \{11,13\}, \{12,15\}, \{13,15\}, \{3,4,7\}, \{3,4,10\}, \{3,4,12\}, \{3,4,13\}, \{3,7,11\}, \{3,7,12\}, \{3,7,13\}, \{3,7,15\}, \{3,10,11\}, \{3,10,12\}, \{3,10,13\}, \{3,10,15\}, \{3,11,12\}, \{3,11,13\}, \{3,12,15\}, \{3,13,15\}, \{4,7,11\}, \{4,7,12\}, \{4,7,13\}, \{4,7,15\}, \{4,10,11\}, \{4,10,12\}, \{4,10,13\}, \{4,10,15\}, \{4,11,12\}, \{4,11,13\}, \{4,12,15\}, \{4,13,15\}, \{7,11,12\}, \{7,11,13\}, \{7,11,15\}, \{7,12,15\}, \{7,13,15\}, \{10,11,12\}, \{10,11,13\}, \{10,11,15\}, \{10,12,15\}, \{10,13,15\}, \{11,12,15\}, \{11,13,15\}, \{3,4,7,11\}, \{3,4,7,12\}, \{3,4,7,13\}, \{3,4,7,15\}, \{3,4,10,11\}, \{3,4,10,12\}, \{3,4,10,13\}, \{3,4,10,15\}, \{3,4,11,12\}, \{3,4,11,13\}, \{3,4,12,15\}, \{3,4,13,15\}, \{3,7,11,12\}, \{3,7,11,13\}, \{3,7,11,15\}, \{3,7,12,15\}, \{3,7,13,15\}, \{3,10,11,12\}, \{3,10,11,13\}, \{3,10,11,15\}, \{3,10,12,15\}, \{3,10,13,15\}, \{3,11,12,15\}, \{3,11,13,15\}, \{4,7,11,12\}, \{4,7,11,13\}, \{4,7,11,15\}, \{4,7,12,15\}, \{4,7,13,15\}, \{4,10,11,12\}, \{4,10,11,13\}, \{4,10,11,15\}, \{4,10,12,15\}, \{4,10,13,15\}, \{4,11,12,15\}, \{4,11,13,15\}, \{7,11,12,15\}, \{7,11,13,15\}, \{10,11,12,15\}, \{10,11,13,15\}, \{3,4,7,11,12\}, \{3,4,7,11,13\}, \{3,4,7,11,15\}, \{3,4,7,12,15\}, \{3,4,7,13,15\}, \{3,4,10,11,12\}, \{3,4,10,11,13\}, \{3,4,10,11,15\}, \{3,4,10,12,15\}, \{3,4,10,13,15\}, \{3,4,11,12,15\}, \{3,4,11,13,15\}, \{3,7,11,12,15\}, \{3,7,11,13,15\}, \{3,10,11,12,15\}, \{3,10,11,13,15\}, \{4,7,11,12,15\}, \{4,7,11,13,15\}, \{4,10,11,12,15\}, \{4,10,11,13,15\}, \{3,4,7,11,12,15\}, \{3,4,7,11,13,15\}, \{3,4,10,11,12,15\}, \{3,4,10,11,13,15\} 
    \item Bi(39) : \{3,9\}, \{3,10\}, \{4,9\}, \{4,10\}, \{3,4,9\}, \{3,4,10\}, \{3,9,10\}, \{4,9,10\}, \{3,4,9,10\} 
\end{itemize}

\section{A proof of non-integrality for a Bianchi group packing}
\label{nonint bianchi proof example}

To prove non-integrality of extended Bianchi group packings, we applied \lemref{nonintpoly} to solve $g V = 0$, where $V$ is an over-determined inversive coordinate matrix of the packing's cluster and part of its orbit. In the case of non-integrality, $g$ will have a nonlinear relation between its entries, guaranteeing a non-integral packing (see \secref{bianchi packings} for details). Below is an example, which proves non-integrality for $\hat{Bi}(17)$ cluster $\{4,8\}$.

\begin{align*}
\left(
\begin{array}{cccccc}
 g(1,1) & g(1,2) & g(1,3) & g(1,4) & g(1,5) & g(1,6) \\
 g(2,1) & g(2,2) & g(2,3) & g(2,4) & g(2,5) & g(2,6) \\
 g(3,1) & g(3,2) & g(3,3) & g(3,4) & g(3,5) & g(3,6) \\
 g(4,1) & g(4,2) & g(4,3) & g(4,4) & g(4,5) & g(4,6) \\
 g(5,1) & g(5,2) & g(5,3) & g(5,4) & g(5,5) & g(5,6) \\
 g(6,1) & g(6,2) & g(6,3) & g(6,4) & g(6,5) & g(6,6) \\
\end{array}
\right).\left(
\begin{array}{cccc}
 \sqrt{17} & 0 & 0 & 1 \\
 2 \sqrt{34} & \sqrt{34} & \sqrt{\frac{17}{2}} & \frac{11}{\sqrt{2}} \\
 \sqrt{17} & 0 & 0 & -1 \\
 0 & \sqrt{17} & 0 & 1 \\
 5 \sqrt{17} & 4 \sqrt{17} & \sqrt{17} & 18 \\
 39 \sqrt{17} & 16 \sqrt{17} & 0 & 103 \\
\end{array}
\right)= 0
\end{align*}
\begin{align*}
\implies
\left\{
\begin{array}{ccl}
g(1,1)&\to& \frac{3 g(1,2)}{\sqrt{2}}-63 g(1,6)\\
g(1,3)&\to& 24 g(1,6)-\sqrt{2} g(1,2)\\
g(1,4)&\to& \sqrt{2} g(1,2)-16 g(1,6)\\
g(1,5)&\to& -\frac{g(1,2)}{\sqrt{2}}\\
g(2,1)&\to& \frac{3 g(2,2)}{\sqrt{2}}-63 g(2,6)\\
g(2,3)&\to& 24 g(2,6)-\sqrt{2} g(2,2)\\
g(2,4)&\to& \sqrt{2} g(2,2)-16 g(2,6)\\
g(2,5)&\to& -\frac{g(2,2)}{\sqrt{2}} \\
g(3,1)&\to& \frac{3 g(3,2)}{\sqrt{2}}-63 g(3,6) \\
g(3,3)&\to& 24 g(3,6)-\sqrt{2} g(3,2) \\
g(3,4)&\to& \sqrt{2} g(3,2)-16 g(3,6) \\
g(3,5)&\to& -\frac{g(3,2)}{\sqrt{2}} \\
g(4,1)&\to& \frac{3 g(4,2)}{\sqrt{2}}-63 g(4,6) \\
g(4,3)&\to& 24 g(4,6)-\sqrt{2} g(4,2) \\
g(4,4)&\to& \sqrt{2} g(4,2)-16 g(4,6) \\
g(4,5)&\to& -\frac{g(4,2)}{\sqrt{2}} \\
g(5,1)&\to& \frac{3 g(5,2)}{\sqrt{2}}-63 g(5,6) \\
g(5,3)&\to& 24 g(5,6)-\sqrt{2} g(5,2) \\
g(5,4)&\to& \sqrt{2} g(5,2)-16 g(5,6) \\
g(5,5)&\to& -\frac{g(5,2)}{\sqrt{2}} \\
g(6,1)&\to& \frac{3 g(6,2)}{\sqrt{2}}-63 g(6,6) \\
g(6,3)&\to& 24 g(6,6)-\sqrt{2} g(6,2) \\
g(6,4)&\to& \sqrt{2} g(6,2)-16 g(6,6) \\
g(6,5)&\to& -\frac{g(6,2)}{\sqrt{2}}
\end{array}
\right.
\end{align*}

\section{Known nontrivial high-dim. packings: data \& proofs}\label{hdpackingdata}

In this section we present packings for quadratic forms whose Coxeter diagrams as computed from Vinberg's algorithm in \cite{Vin72,Mcl10} do not have clusters, as mentioned in \secref{hd}. 

\subsection{$d=1,n=3$}\label{pack13}

In the configuration obtained by Vinberg's algorithm in \cite{Vin72}, we double about 3 to obtain the following Coxeter diagram

\begin{center}
\begin{tikzpicture}[scale=1.5]
\coordinate (1) at (2,0);
\coordinate (2) at (1,0);
\coordinate (3) at (4,0);
\coordinate (5) at (3,0);
\coordinate (6) at (0,0);

\draw[ultra thick] (2) -- (6);
\draw[ultra thick] (3) -- (5);
\draw[thick] (1) -- (2); 
\draw[thick] (1) -- (5);

{    \filldraw[fill=white] (1) circle (2pt) node[above] {\scriptsize 1};}
{    \filldraw[fill=white] (2) circle (2pt) node[above] {\scriptsize 2};}
{    \filldraw[fill=red] (3) circle (2pt) node[above] {\scriptsize 4};}
{    \filldraw[fill=white] (5) circle (2pt) node[above] {\scriptsize 3.2};}
{    \filldraw[fill=red] (6) circle (2pt) node[above] {\scriptsize 3.4};}

\end{tikzpicture}
\end{center}

\noindent arising from the configuration 

\begin{align}
\begin{array}{r|cccc|l}
& \hat{b} & b & bx & by & \text{also equals:}\\\hline
1 & -\frac{\sqrt{2}}{2} & \frac{\sqrt{2}}{2} & \frac{\sqrt{2}}{2} & 0 & 3.1 \\
2 & 0 & 0 & -\frac{\sqrt{2}}{2} & \frac{\sqrt{2}}{2} \\
4 & \sqrt{2} & 0 & \frac{\sqrt{2}}{2} & \frac{\sqrt{2}}{2}\\
3.2 & 0 & 0 & -\frac{\sqrt{2}}{2} & -\frac{\sqrt{2}}{2} \\
3.4 & \sqrt{2} & 0 & \frac{\sqrt{2}}{2} & -\frac{\sqrt{2}}{2}\\
\end{array}
\end{align}

\noindent which has Gram matrix 

\begin{align}
\left(
\begin{array}{ccccc}
 -1 & 0 & \frac{1}{2} & 0 & 1 \\
 0 & -1 & \frac{1}{2} & 1 & 0 \\
 \frac{1}{2} & \frac{1}{2} & -1 & 0 & 0 \\
 0 & 1 & 0 & -1 & 0 \\
 1 & 0 & 0 & 0 & -1 \\
\end{array}
\right).
\end{align}

Interestingly, this is precisely the same as the Apollonian packing, which is also the packing for $\hat{Bi}(1)$. 

\subsection{$d=3,n=5$}\label{pack35}

In the configuration obtained by Vinberg's algorithm in \cite{Mcl10}, we double about 5 to obtain the following Coxeter diagram

\begin{center}
\begin{tikzpicture}[scale=1.5]
\coordinate (5) at (1,0);
\coordinate (4) at (2,0);
\coordinate (3) at (3,0);
\coordinate (1) at (3.5,.5);
\coordinate (6) at (0,0);
\coordinate (7) at (4.5,-.5);
\coordinate (2) at (3.5,-.5);
\coordinate (8) at (4.5,.5);

\draw[thick, double distance=2.5pt] (6) -- (5);
\draw[thick, double distance=0.3pt] (6) -- (5);
\draw[thick] (5) -- (4);
\draw[thick] (4) -- (3);
\draw[thick] (3) -- (1);
\draw[thick] (3) -- (2);
\draw[ultra thick] (1) -- (8);
\draw[ultra thick] (2) -- (7);

{    \filldraw[fill=white] (1) circle (2pt) node[above] {\scriptsize 5.4};}
{    \filldraw[fill=white] (2) circle (2pt) node[below] {\scriptsize 4};}
{    \filldraw[fill=white] (3) circle (2pt) node[above] {\scriptsize 3};}
{    \filldraw[fill=white] (4) circle (2pt) node[above] {\scriptsize 2};}
{    \filldraw[fill=white] (5) circle (2pt) node[above] {\scriptsize 1};}
{    \filldraw[fill=white] (6) circle (2pt) node[above] {\scriptsize 6};}
{    \filldraw[fill=red] (7) circle (2pt) node[below] {\scriptsize 5.7};}
{    \filldraw[fill=red] (8) circle (2pt) node[above] {\scriptsize 7};}

\end{tikzpicture}
\end{center}

\noindent arising from the configuration 

\begin{align}
\begin{array}{r|cccccc|l}
& \hat{b} & b & bx_1 & bx_2 & bx_3 & bx_4 & \text{also equals:}\\\hline
1 & -\frac{\sqrt{2}}{2} & \frac{\sqrt{2}}{2} & \frac{\sqrt{2}}{2} & 0 & 0 & 0 & 5.1\\
2 & 0 & 0 & -\frac{\sqrt{2}}{2} & \frac{\sqrt{2}}{2} & 0 & 0 & 5.2\\
3 & 0 & 0 & 0 & -\frac{\sqrt{2}}{2} & \frac{\sqrt{2}}{2} & 0 & 5.3\\
4 & 0 & 0 & 0 & 0 & -\frac{\sqrt{2}}{2} & \frac{\sqrt{2}}{2}\\
6 & \frac{\sqrt{2}+\sqrt{6}}{2} & \frac{\sqrt{2}-\sqrt{6}}{2} & 0 & 0 & 0 & 0 & 5.6\\
7 & \frac{\sqrt{2}+\sqrt{6}}{2} & \frac{\sqrt{6}-\sqrt{2}}{2} & \frac{\sqrt{2}}{2} & \frac{\sqrt{2}}{2} & \frac{\sqrt{2}}{2} & \frac{\sqrt{2}}{2}\\
5.4 & 0 & 0 & 0 & 0 & -\frac{\sqrt{2}}{2} & -\frac{\sqrt{2}}{2}\\
5.7 & \frac{\sqrt{2}+\sqrt{6}}{2} & \frac{\sqrt{6}-\sqrt{2}}{2} & \frac{\sqrt{2}}{2} & \frac{\sqrt{2}}{2} & \frac{\sqrt{2}}{2} & -\frac{\sqrt{2}}{2}
\end{array}
\end{align}

\noindent which has Gram matrix 

\begin{align}
\left(
\begin{array}{cccccccc}
 -1 & 0 & \frac{1}{2} & 0 & 0 & 0 & 0 & 1 \\
 0 & -1 & \frac{1}{2} & 0 & 0 & 0 & 1 & 0 \\
 \frac{1}{2} & \frac{1}{2} & -1 & \frac{1}{2} & 0 & 0 & 0 & 0 \\
 0 & 0 & \frac{1}{2} & -1 & \frac{1}{2} & 0 & 0 & 0 \\
 0 & 0 & 0 & \frac{1}{2} & -1 & \frac{\sqrt{3}}{2} & 0 & 0 \\
 0 & 0 & 0 & 0 & \frac{\sqrt{3}}{2} & -1 & 0 & 0 \\
 0 & 1 & 0 & 0 & 0 & 0 & -1 & 0 \\
 1 & 0 & 0 & 0 & 0 & 0 & 0 & -1 \\
\end{array}
\right).
\end{align}

\subsection{$d=3,n=6$}\label{pack36}

\begin{claim}\label{claim:packing36}
The following inversive coordinates generate a subgroup of the group of isometries obtained by Vinberg's algorithm in \cite{Mcl10}.

\begin{align}\label{coords6}
\begin{array}{c|ccccccc|l}
&\hat{b} & b & bx_1 & bx_2 & bx_3 & bx_4 & bx_5 & \text{def'd as}:\\\hline
9& 0& 0& 0& 0& 0& 0& -1 & 3.6\\
10& 0& 0& 0& 0& 0& -\frac{\sqrt{2}}{2} & \frac{\sqrt{2}}{2} & 3.5\\
11& 0& 0& 0& -\frac{\sqrt{2}}{2} & 0& \frac{\sqrt{2}}{2} & 0 & 3.4\\
12& 0& 0& 0& -\frac{\sqrt{2}}{2} & \frac{\sqrt{2}}{2} & 0& 0 & 3\\
13& -\frac{\sqrt{2}}{2} & \frac{\sqrt{2}}{2} & \frac{\sqrt{2}}{2} & 0& 0& 0& 0 & 1\\
14& \sqrt{2} & \sqrt{2} & 0& \frac{\sqrt{6}}{2} & \frac{\sqrt{6}}{2} & 0& 0 & (2.1.2.7).3.(2.1).\nye{7}\\
15& \frac{\sqrt{2}+\sqrt{6}}{2} & \frac{\sqrt{2}-\sqrt{6}}{2} & 0& 0& 0& 0& 0 & 7\\
16& \frac{\sqrt{2}+\sqrt{6}}{2} & \frac{\sqrt{6}-\sqrt{2}}{2} & 0& \sqrt{2} & 0& 0& 0 & 2.7\\
17& \frac{\sqrt{2}+\sqrt{6}}{2} & \frac{\sqrt{6}-\sqrt{2}}{2} & \frac{\sqrt{2}}{2} & \frac{\sqrt{2}}{2} & \frac{\sqrt{2}}{2} & \frac{\sqrt{2}}{2} & 0 & 3.8
\end{array}
\end{align}

\end{claim}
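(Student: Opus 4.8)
The plan is to read the claim through the ``def'd as'' column of \eqref{coords6}, which already exhibits each of the vectors $9,\dots,17$ as the image of one of the generators produced by Vinberg's algorithm under a word in the reflections about those generators. Write $\Gamma=\langle R_{v_1},\dots,R_{v_8}\rangle$ for the group of isometries obtained in \cite{Mcl10}. A group generated by elements of $\Gamma$ is automatically a subgroup of $\Gamma$, so it suffices to prove that the reflection $R_k$ about each tabulated vector $k\in\{9,\dots,17\}$ lies in $\Gamma$.

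The mechanism is conjugation. By \lemref{refrelation}, reflecting about $b.a=v_aR_{v_b}$ gives $R_{b.a}=R_{v_b}R_{v_a}R_{v_b}$, and the companion identity $(a.b).c=a.b.a.c$ rewrites a reflection about a composite vector as a word purely in the original generators. Iterating, a vector written $i_1.i_2.\cdots.i_\ell$ satisfies
\[
R_{i_1.i_2.\cdots.i_\ell}=\bigl(R_{v_{i_1}}\cdots R_{v_{i_{\ell-1}}}\bigr)\,R_{v_{i_\ell}}\,\bigl(R_{v_{i_{\ell-1}}}\cdots R_{v_{i_1}}\bigr),
\]
a $\Gamma$-conjugate of the reflection about the base generator $v_{i_\ell}$, hence an element of $\Gamma$. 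The short rows are then immediate: $12=3$, $13=1$, $15=7$ are generators, while $9=3.6$, $10=3.5$, $11=3.4$, $16=2.7$, $17=3.8$ are single conjugations. For $14=(2.1.2.7).3.(2.1).\nye{7}$ I would first expand the composite reflecting vectors $2.1.2.7$ and $2.1$ using $(a.b).c=a.b.a.c$, absorb the orientation reversal via $\nye{7}=7.7$ and $R_{\nye{7}}=R_{v_7}$, and collect the result as a conjugate of $R_{v_7}$. In every case $R_k\in\Gamma$, giving $\langle 9,\dots,17\rangle\le\Gamma$.

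What remains, and what I expect to be the real labor, is confirming that the explicit coordinates printed in \eqref{coords6} are in fact the vectors named in the ``def'd as'' column; the containment above is only as good as that certificate. This is a direct but bookkeeping-heavy check: for each row one forms the reflection matrices $R_{\hat v}=I+2Q\hat v^T\hat v$ of \defref{reflmatrix}, multiplies them in the order dictated by right-associativity of the action, applies the product to the named base vector, and verifies both the listed entries and the normalization $\ip{v_k}{v_k}_Q=-1$. The single- and double-reflection rows fall out quickly; the genuine effort is propagating the $\sqrt2,\sqrt3,\sqrt6$ entries through the long word defining row $14$. I would record the intermediate vectors in a table so that each equality is checked one reflection at a time, which simultaneously serves as an independent verification that $14$ collapses to a conjugate of a generator.
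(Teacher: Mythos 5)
Your proposal is correct and is essentially the paper's own justification: the paper gives no separate proof of this claim, relying precisely on the ``def'd as'' column together with \lemref{refrelation} and the identity $(a.b).c=a.b.a.c$ to exhibit each reflection about a tabulated vector as a $\Gamma$-conjugate of a reflection about one of Vinberg's original generators $1,\dots,8$. Your write-up merely makes explicit the conjugation bookkeeping (including $R_{\nye{7}}=R_{v_7}$ for the orientation-reversed letter in row $14$, and the coordinate verification) that the paper leaves to the reader.
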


This is \eqref{coords6}'s Gram matrix: 

\begin{align}
\left(
\begin{array}{ccccccccc}
 -1 & \frac{\sqrt{2}}{2} & 0 & 0 & 0 & 0 & 0 & 0 & 0 \\
 \frac{\sqrt{2}}{2} & -1 & \frac{1}{2} & 0 & 0 & 0 & 0 & 0 & \frac{1}{2} \\
 0 & \frac{1}{2} & -1 & -\frac{1}{2} & 0 & \frac{\sqrt{3}}{2} & 0 & 1 & 0 \\
 0 & 0 & -\frac{1}{2} & -1 & 0 & 0 & 0 & 1 & 0 \\
 0 & 0 & 0 & 0 & -1 & 0 & \frac{\sqrt{3}}{2} & \frac{1}{2} & 0 \\
 0 & 0 & \frac{\sqrt{3}}{2} & 0 & 0 & -1 & 1 & 0 & 0 \\
 0 & 0 & 0 & 0 & \frac{\sqrt{3}}{2} & 1 & -1 & 0 & 0 \\
 0 & 0 & 1 & 1 & \frac{1}{2} & 0 & 0 & -1 & 0 \\
 0 & \frac{1}{2} & 0 & 0 & 0 & 0 & 0 & 0 & -1 \\
\end{array}
\right).
\end{align}

\begin{lemma}
\eqref{coords6} has empty interior in $\R^5$, and extends by Poincar\'{e} extension to a hyperbolic polytope of finite volume. 
\end{lemma}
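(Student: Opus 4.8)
The plan is to follow the template of the lemma for \eqref{coords33} in \secref{pack33}: convert each row of \eqref{coords6} into an explicit interior inequality via \lemref{circint}, exhibit a small inconsistent subcollection (giving empty interior in $\R^5$), and then read off boundedness to conclude finite volume. First I would record the interior conditions. Rows $9$--$12$ have $b=0$, so by \lemref{circint} and \lemref{planefacts} they are half-spaces $\hat n\cdot x>\frac12\hat b$, and since $\hat b=0$ in each these reduce to $x_5<0$, $x_5>x_4$, $x_4>x_2$, $x_3>x_2$ (none of which I will actually need). The rows with $b\neq0$ convert, after dividing $bz$ by $b$ to recover the center $z$ and taking $r=1/b$, into genuine ball conditions: row $13$ gives the inside of $B_{(1,0,0,0,0)}(\sqrt2)$, row $14$ the inside of $B_{(0,\frac{\sqrt3}{2},\frac{\sqrt3}{2},0,0)}(\frac{1}{\sqrt2})$, and row $15$---whose bend $\frac{\sqrt2-\sqrt6}{2}$ is negative---the \emph{exterior} of $B_0(\sqrt{2+\sqrt3})$, using $1/b^2=2+\sqrt3$ together with the $\sign b$ factor of \lemref{circint}. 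Rows $16$ and $17$ give two further balls that I will not need.

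The empty-interior step is then immediate. Suppose $x=(x_1,\dots,x_5)$ lies in the mutual interior. Row $13$ gives $\abs{x}^2-2x_1+1<2$, i.e. $\abs{x}^2<1+2x_1$, while row $15$ gives $\abs{x}^2>2+\sqrt3$; chaining these forces $x_1>\frac{1+\sqrt3}{2}$. But the left-hand side of row $14$ contains the summand $x_1^2$ and is bounded above by $\frac12$, so $x_1^2<\frac12$ and hence $x_1<\frac{1}{\sqrt2}$. Since $\frac{1+\sqrt3}{2}>1>\frac{1}{\sqrt2}$, this is a contradiction, so no point of $\R^5$ lies in the mutual interior.

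For the finite-volume claim I would pass to the upper half-space model of $\H^6$ over $\R^5$. The Poincar\'e extension of row $13$ is the condition $(x_1-1)^2+\sum_{i=2}^5 x_i^2+t^2<2$, which confines the entire polytope to a Euclidean hemisphere of radius $\sqrt2$; in particular $0<t<\sqrt2$ and every $x_i$ is bounded, so the polytope is bounded. The empty-interior computation shows that its closure meets the boundary $\{t=0\}=\R^5$ only on a set with empty interior, and since the polytope is cut out by finitely many hyperspheres this contact is confined to at most finitely many ideal vertices. A bounded convex polyhedron bounded by finitely many walls whose closure meets $\partial\H^6$ in only finitely many points has finite hyperbolic volume, which is exactly the assertion.

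The algebra is routine once the inequalities are tabulated; the one genuinely delicate point is the final step, upgrading ``bounded with empty ground-level interior'' to ``finite volume.'' The care there is in confirming that the finitely many boundary-contact points are honest cusps (ideal vertices) rather than a positive-dimensional face lying on $\partial\H^6$---this is where the tangency/disjointness pattern recorded in the Gram matrix just above, which forces the contacts to be isolated, must be invoked, exactly as is done implicitly for the analogous configuration in \secref{pack33}.
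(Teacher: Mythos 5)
Your proposal is correct, and it follows the same overall template as the paper's proof (the one the paper itself models on \eqref{coords33}): convert rows of \eqref{coords6} to inequalities via \lemref{circint}, exhibit an inconsistent subcollection to get empty interior, then conclude finite volume from boundedness plus controlled boundary contact. But the substance---which walls you combine---is genuinely different. The paper's contradiction uses the hyperplane walls $9,10,11$ to force $x_2<x_4<x_5<0$ and then plays this against wall $16$, the ball of radius squared $2+\sqrt{3}$ centered at $(0,\sqrt{3}+1,0,0,0)$: its interior condition forces $\left(\sqrt{3}+1-x_2\right)^2<2+\sqrt{3}$, while $x_2<0$ forces $\left(\sqrt{3}+1-x_2\right)^2>\left(\sqrt{3}+1\right)^2=2\left(2+\sqrt{3}\right)$. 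Your contradiction instead uses only walls $13,14,15$: chaining $13$ and $15$ gives $x_1>\frac{1+\sqrt{3}}{2}$, while $14$ gives $x_1^2<\frac{1}{2}$, and indeed $\frac{1+\sqrt{3}}{2}>1>\frac{1}{\sqrt{2}}$; this is more economical and needs none of the hyperplane walls. For boundedness the paper extracts coordinate bounds from wall $16$'s inequality (its display there actually has a sign slip, using radius $\frac{\sqrt{6}-\sqrt{2}}{2}$ where the wall's radius is $\frac{\sqrt{6}+\sqrt{2}}{2}$, though boundedness survives), whereas you simply observe that wall $13$'s Poincar\'{e} extension confines the whole polytope to a half-ball of radius $\sqrt{2}$, which is cleaner. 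On the last step you are in fact more careful than the paper: the paper asserts the polytope ``does not meet the boundary of $\H^6$,'' glossing over possible cusps at tangency points, while you correctly reduce to finitely many ideal vertices. One simplification: you do not need the Gram matrix tangency pattern to certify that the boundary contact is isolated. Since the polytope is a bounded intersection of finitely many hyperbolic half-spaces, its closure in the Klein model is a compact convex polytope contained in the closed unit ball, and any point where it touches the strictly convex sphere at infinity must be an extreme point, hence one of its finitely many vertices; so ``bounded with empty ground-level interior'' upgrades to ``finitely many cusps'' automatically.
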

\begin{proof}
We first show that the configuration has empty interior: 
\begin{align}
-x_5&>0&&\tag{3.6.9}\\
\implies x_5&<0\nonumber\\
-\frac{x_4}{\sqrt{2}}+\frac{x_5}{\sqrt{2}}&>0&&\tag{3.6.10}\\
\implies x_4<x_5&<0\nonumber\\
-\frac{x_2}{\sqrt{2}}+\frac{x_4}{\sqrt{2}}&>0&&\tag{3.6.11}\\
\implies x_2<x_4&<0\nonumber\\
\label{ineq368}\left(\frac{\sqrt{6}+\sqrt{2}}{2}\right)^2-x_1^2-\left(\sqrt{3}+1-x_2\right)^2-x_3^2-x_4^2-x_5^2&>0&&\tag{3.6.16}\\
\implies\left(\frac{\sqrt{6}+\sqrt{2}}{2}\right)^2>x_1^2+\left(\sqrt{3}+1-x_2\right)^2+x_3^2+x_4^2+x_5^2&\ge\left(\sqrt{3}+1-x_2\right)^2\nonumber\\
&>\left(\sqrt{3}+1\right)^2\nonumber\\
&=2\left(\frac{\sqrt{6}+\sqrt{2}}{2}\right)^2,\nonumber
\end{align}

\noindent a contradiction. Hence no point $(x_i)_{i=1}^5\in\R^5$ lies in the mutual interior of the specified configuration. 

\eqref{ineq368} gives bounds for each coordinate:
\begin{align*}
&\left(\frac{\sqrt{6}-\sqrt{2}}{2}\right)^2-x_1^2-\left(\sqrt{3}+1-x_2\right)^2-x_3^2-x_4^2-x_5^2>0\\
\implies&\left(\frac{\sqrt{6}-\sqrt{2}}{2}\right)^2>x_1^2+\left(\sqrt{3}+1-x_2\right)^2+x_3^2+x_4^2+x_5^2\ge x_i^2\\
\implies&\abs{x_i}\le\frac{\sqrt{6}-\sqrt{2}}{2}
\end{align*}

\noindent for $1\le i\le5$. Since the intersection of the respective Poincar\'{e} extensions of the circles is bounded and does not meet the boundary of $\H^6$, it must be of finite volume.
\end{proof}

\begin{thm}
\eqref{coords6} generates a sphere packing in $\R^5$ through the cluster $\{12\}$. 
\end{thm}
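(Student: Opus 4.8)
The plan is to deduce the theorem from the Structure Theorem (\thmref{structthm}) applied with cluster $\cC=\{12\}$. Since a one-element cluster makes the intra-cluster hypothesis automatic, only two points remain: that $v_{12}$ meet every cocluster sphere orthogonally, tangentially, or at positive hyperbolic distance, and that the cocluster generate a group acting on $\H^6$ with finite covolume. The first is a direct reading of the row of the Gram matrix indexed by $12$, which records $\langle v_{12},v_j\rangle=0$ for $j\in\{9,10,13,14,15,17\}$ and $\langle v_{12},v_{16}\rangle=1$.

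The only entry of that row which is neither $0$ nor of modulus at least $1$ is $\langle v_{12},v_{11}\rangle=-\tfrac12$; thus $v_{11}$ and $v_{12}$ cross at angle $\tfrac{\pi}{3}$, and reflecting $v_{12}$ in $v_{11}$ would produce a sphere overlapping $v_{12}$. I would therefore take the cocluster to be $\hat\cC=\{9,10,13,14,15,16,17\}$, discarding $v_{11}$, and observe that against exactly this set the $12$-row is admissible: $v_{12}$ is orthogonal to $9,10,13,14,15,17$ and tangent to $16$, which is precisely the pattern permitted by \thmref{structthm}.

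The real work, and the step I expect to be the main obstacle, is to show that $\Gamma=\langle\hat\cC\rangle$ acts with finite covolume, since the preceding lemma supplies this only for the full group $\langle v_9,\dots,v_{17}\rangle$ and dropping the wall $v_{11}$ enlarges the fundamental domain. The most economical route reuses that lemma's boundedness estimate: the coordinate bound there comes from sphere $16$, which still lies in $\hat\cC$, so the region cut out by the seven cocluster walls remains bounded in $\H^6$; what must be re-examined is that deleting $v_{11}$ (together with the cluster wall $v_{12}$) opens no new unbounded direction and creates only finitely many ideal vertices, leaving a polytope of finite volume. A second route is to prove $[\langle v_9,\dots,v_{17}\rangle:\Gamma]<\infty$ by rewriting $R_{v_{11}}$ and $R_{v_{12}}$ as words in the reflections of $\hat\cC$ and their conjugates via \lemref{refrelation}, so that $\Gamma$ inherits finite covolume from the full finite-covolume group. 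Once finite covolume is established, \thmref{structthm} applies directly: reflections of $v_{12}$ across the orthogonal walls fix it while reflection across the tangent wall $16$ seeds a chain of mutually tangent spheres, and the orbit $\Gamma\cdot\{12\}$ is the asserted crystallographic packing of $\R^5$.
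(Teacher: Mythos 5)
Your proposal necessarily diverges from the paper's proof, and the divergence is to your credit: you have caught a genuine error. The paper's entire proof is an appeal to the Coxeter diagram printed beneath it, in which node $12$ is joined only to node $16$; but that diagram contradicts the paper's own data. The printed Gram matrix of \eqref{coords6} has $\langle v_{11},v_{12}\rangle_Q=-\tfrac12$, and this is forced by the printed coordinates: walls $12$ and $11$ are the Euclidean mirrors $x_2=x_3$ and $x_2=x_4$ (wall $3$ and its reflection $3.4$), which cross at angle $\pi/3$. So the decomposition the paper implicitly uses ($\cC=\{12\}$, $\hat\cC$ all eight remaining walls) violates the hypotheses of \thmref{structthm}, and its conclusion genuinely fails: $R_{v_{11}}$ carries the mirror $x_2=x_3$ to the mirror $x_3=x_4$, which crosses it, so that orbit contains spheres with overlapping interiors. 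Your repair---discarding $v_{11}$ and taking $\hat\cC=\{9,10,13,14,15,16,17\}$---is exactly the right one, and it is consistent with what the paper does in the analogous spot of \secref{pack311}, where the wall used is $2.3.2.4$ (orthogonal to wall $3$) rather than $3.4$.

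The gap in your write-up is the finite-covolume step, which you rightly isolate as the crux but leave open; moreover, of your two routes only (a) can succeed. Route (b) is a dead end: if $R_{v_{11}}$ were expressible through cocluster reflections then $R_{v_{11}}\in\Gamma$, and $\Gamma\cdot\cC$ would again contain the crossing pair $v_{12}$, $R_{v_{11}}(v_{12})$, contradicting the packing property you are trying to establish; and deducing finite index from a ratio of covolumes presupposes that $\Gamma$ is a lattice, which is the point at issue. Route (a) does close, as follows. Every pairwise Gram entry among the seven cocluster walls lies in $\{0,\tfrac12,\tfrac{\sqrt2}{2},\tfrac{\sqrt3}{2},1\}$, so all dihedral angles of the polytope $P$ they cut out are $\pi/2$, $\pi/3$, $\pi/4$, $\pi/6$, or $0$; hence $P$ is a Coxeter polytope and is a fundamental chamber for $\Gamma$. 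Next, $P$ lies inside the Poincar\'e extension of wall $16$, a half-ball, so it is bounded. Finally, the closure of $P$ misses $\partial\H^6$ entirely: on the boundary, the closed ball of wall $14$ (center $(0,\tfrac{\sqrt3}{2},\tfrac{\sqrt3}{2},0,0)$, radius $\tfrac{\sqrt2}{2}$) meets the closed exterior of wall $15$ (center $0$, radius $\tfrac{\sqrt6+\sqrt2}{2}$) in the single internal-tangency point $x^{*}=\bigl(0,\tfrac{\sqrt3+1}{2},\tfrac{\sqrt3+1}{2},0,0\bigr)$, and $x^{*}$ is excluded by wall $13$ since $|x^{*}-(1,0,0,0,0)|^{2}=3+\sqrt3>2$. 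Therefore $P$ is compact, $\Gamma$ is a cocompact lattice, and \thmref{structthm} applied to $\cC=\{12\}$, $\hat\cC=\{9,10,13,14,15,16,17\}$ yields the claimed packing. With that argument supplied your proof is complete---and, unlike the printed one, it is consistent with the paper's own Gram matrix.
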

\begin{proof}
Application of \thmref{structthm} to the following Coxeter diagram proves the result. 
\begin{center}
\begin{tikzpicture}[scale=1.5]
    \coordinate (one) at (0, 0);
    \coordinate (two) at (1, 0);
    \coordinate (three) at (1, 1);
    \coordinate (four) at (3, 0);
    \coordinate (five) at (4, 1/2);
    \coordinate (six) at (2, 1);
    \coordinate (seven) at (3,1);
    \coordinate (eight) at (2,0);
    \coordinate (nine) at (0,1);
    
    \draw[thick, double distance = 2pt] (one) -- (two);
    \draw[thick] (two) -- (nine);
    \draw[thick] (two) -- (three);
    \draw[ultra thick] (three) -- (eight);
    \draw[thick, double distance = 2.5pt] (three) -- (six);
    \draw[thick, double distance = .3pt] (three) -- (six);
    \draw[ultra thick] (six) -- (seven);
    \draw[thick, double distance = 2.5pt] (five) -- (seven);
    \draw[thick, double distance = .3pt] (five) -- (seven);
    \draw[ultra thick] (four) -- (eight);
    \draw[thick] (five) -- (eight);

{    \filldraw[fill=white] (one) circle (2pt) node[below] {\scriptsize 9};}
{    \filldraw[fill=white] (two) circle (2pt) node[below] {\scriptsize 10};}
{    \filldraw[fill=white] (three) circle (2pt) node[above] {\scriptsize 11};}
{    \filldraw[fill=red] (four) circle (2pt) node[below] {\scriptsize 12};}
{    \filldraw[fill=white] (five) circle (2pt) node[below] {\scriptsize 13};}
{    \filldraw[fill=white] (six) circle (2pt) node[above] {\scriptsize 14};}
{    \filldraw[fill=white] (seven) circle (2pt) node[above] {\scriptsize 15};}
{    \filldraw[fill=white] (eight) circle (2pt) node[below] {\scriptsize 16};}
{    \filldraw[fill=white] (nine) circle (2pt) node[above] {\scriptsize 17};}
\end{tikzpicture}
\end{center}

\end{proof}

\subsection{$d=3,n=7$}\label{pack37}

\begin{claim}\label{claim:packing37}
The following inversive coordinates generate a subgroup of the group of isometries obtained by Vinberg's algorithm in \cite{Mcl10}.

\setcounter{MaxMatrixCols}{20}
\begin{align}\label{coords7}
\begin{array}{c|cccccccc|l}
& \hat{b} & b & bx_1 & bx_2 & bx_3 & bx_4  & bx_5 & bx_6\\\hline
10& 0 & 0 & 0 & 0 & 0 & 0 & 0 & -1 & 3.7\\
11& 0 & 0 & 0 & 0 & 0 & 0 & -\frac{\sqrt{2}}{2} & \frac{\sqrt{2}}{2} & 3.6\\
12& 0 & 0 & 0 & 0 & 0 & -\frac{\sqrt{2}}{2} & \frac{\sqrt{2}}{2} & 0 & 3.5\\
13& 0 & 0 & 0 & -\frac{\sqrt{2}}{2} & 0 & \frac{\sqrt{2}}{2} & 0 & 0 & 3.4\\
14& 0 & 0 & 0 & -\frac{\sqrt{2}}{2} & \frac{\sqrt{2}}{2} & 0 & 0 & 0 & 3\\
15& -\frac{\sqrt{2}}{2} & \frac{\sqrt{2}}{2} & \frac{\sqrt{2}}{2} & 0 & 0 & 0 & 0 & 0 & 1\\
16& \sqrt{2} & \sqrt{2} & 0 & \frac{\sqrt{6}}{2} & \frac{\sqrt{6}}{2} & 0 & 0 & 0 & (2.1.2.8).3.(2.1).\nye{8}\\
17& \frac{\sqrt{2}+\sqrt{6}}{2} & \frac{\sqrt{2}-\sqrt{6}}{2} & 0 & 0 & 0 & 0 & 0 & 0 & 8\\
18& \frac{\sqrt{2}+\sqrt{6}}{2} & \frac{\sqrt{6}-\sqrt{2}}{2} & 0 & \sqrt{2} & 0 & 0 & 0 & 0 & 2.8\\
19& \frac{\sqrt{2}+\sqrt{6}}{2} & \frac{\sqrt{6}-\sqrt{2}}{2} & \frac{\sqrt{2}}{2} & \frac{\sqrt{2}}{2} & \frac{\sqrt{2}}{2} & \frac{\sqrt{2}}{2} & 0 & 0 & 3.9\\
\end{array}
\end{align}

\end{claim}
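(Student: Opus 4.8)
The plan is to reduce the claim to a membership statement and then read that membership off the ``def'd as'' column. Concretely, the assertion is that $\Delta:=\<v_{10},\dots,v_{19}\>$ is a subgroup of the reflection group $\Gamma:=\<v_1,\dots,v_9\>$ produced by Vinberg's algorithm on the form $-3x_0^2+\sum_{i=1}^7 x_i^2$ in \cite{Mcl10}. Since both groups are generated by reflections, it suffices to show that every generating reflection $R_{v_i}$ with $10\le i\le 19$ already lies in $\Gamma$. The essential input is the rightmost column of \eqref{coords7}, which exhibits each $v_i$ as a vector in the $\Gamma$-orbit of one of the original mirrors $v_1,\dots,v_9$, reached by a prescribed word in the actions $b.a=v_aR_{v_b}$.

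First I would convert each orbit expression into a conjugate of an original reflection. For a single action \lemref{refrelation} gives $R_{b.a}=R_{v_b}R_{v_a}R_{v_b}$, and because each $R_{v_b}$ is an involution this is honest conjugation, $R_{b.a}=R_{v_b}R_{v_a}R_{v_b}^{-1}$. Iterating this identity rewrites the reflection about any $w=g.v_j$ (with $g$ a word in $R_{v_1},\dots,R_{v_9}$) as $R_w=g\,R_{v_j}\,g^{-1}\in\Gamma$. Applied row by row: the short entries $10=3.7,\;11=3.6,\;\dots,\;15=1,\;17=8,\;18=2.8,\;19=3.9$ immediately present $R_{v_i}$ as a (possibly trivial) conjugate of one of $R_{v_1},\dots,R_{v_9}$, while the single long entry $16=(2.1.2.8).3.(2.1).\nye{8}$ is treated the same way after fully expanding its composite word, using $\nye{8}=8.8$ and $R_{\nye{8}}=R_{v_8}$. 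In every case $R_{v_i}\in\Gamma$, whence $\Delta\le\Gamma$, which is the claim.

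The remaining, and I expect only nontrivial, task is the bookkeeping check that the inversive coordinates tabulated in \eqref{coords7} actually coincide with the vectors named in the ``def'd as'' column. I would verify this by forming the reflection matrices $R_{v_i}=I+2Q\,v_i^{T}v_i$ from \defref{reflmatrix}, reading the mirrors $v_1,\dots,v_9$ from \cite{Mcl10}, and carrying out the indicated right-multiplications $v_aR_{v_b}$ generation by generation. This computation is purely mechanical, but it is the main obstacle in practice: the nested irrational entries $\sqrt2,\sqrt6,\tfrac{\sqrt2\pm\sqrt6}{2}$ and the length of the word defining $v_{16}$ make it lengthy and error-prone, exactly as in the $d=3,n=6$ verification behind Claim~\ref{claim:packing36}. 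Once these coordinate identities are confirmed, the subgroup conclusion follows formally from the conjugation argument above, and a companion empty-interior/finite-volume lemma (proved via \lemref{circint} as in \secref{pack36}) would then license the subsequent packing statement.
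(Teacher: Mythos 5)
Your proposal is correct and is essentially the argument the paper intends: the paper never writes out an explicit proof of Claim~\ref{claim:packing37}, instead letting the ``def'd as'' column of \eqref{coords7} carry the content, which is precisely your reduction via \lemref{refrelation} (equivalently the identity $(a.b).c=a.b.a.c$) showing each $R_{v_i}$, $10\le i\le19$, is a conjugate of an original mirror's reflection by a word in $\<v_1,\dots,v_9\>$. You also correctly identify the only real work as the mechanical verification that the tabulated coordinates equal the stated words (including $R_{\nye{8}}=R_{v_8}$ since orientation reversal does not change the mirror), which matches the paper's practice in \secref{pack36}--\secref{pack313}.
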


This is \eqref{coords7}'s Gram matrix: 

\begin{align}
\left(
\begin{array}{cccccccccc}
 -1 & \frac{\sqrt{2}}{2} & 0 & 0 & 0 & 0 & 0 & 0 & 0 & 0 \\
 \frac{\sqrt{2}}{2}  & -1 & \frac{1}{2} & 0 & 0 & 0 & 0 & 0 & 0 & 0 \\
 0 & \frac{1}{2} & -1 & \frac{1}{2} & 0 & 0 & 0 & 0 & 0 & \frac{1}{2} \\
 0 & 0 & \frac{1}{2} & -1 & -\frac{1}{2} & 0 & \frac{\sqrt{3}}{2} & 0 & 1 & 0 \\
 0 & 0 & 0 & -\frac{1}{2} & -1 & 0 & 0 & 0 & 1 & 0 \\
 0 & 0 & 0 & 0 & 0 & -1 & 0 & \frac{\sqrt{3}}{2} & \frac{1}{2} & 0 \\
 0 & 0 & 0 & \frac{\sqrt{3}}{2} & 0 & 0 & -1 & 1 & 0 & 0 \\
 0 & 0 & 0 & 0 & 0 & \frac{\sqrt{3}}{2} & 1 & -1 & 0 & 0 \\
 0 & 0 & 0 & 1 & 1 & \frac{1}{2} & 0 & 0 & -1 & 0 \\
 0 & 0 & \frac{1}{2} & 0 & 0 & 0 & 0 & 0 & 0 & -1 \\
\end{array}
\right).
\end{align}

\begin{lemma}
\eqref{coords7} has empty interior in $\R^6$, and extends by Poincar\'{e} extension to a hyperbolic polytope of finite volume. 
\end{lemma}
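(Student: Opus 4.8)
The plan is to reproduce, mutatis mutandis, the two-step template already used for the lemma attached to \eqref{coords6}: first establish that the configuration has empty mutual interior in $\R^6$ by chaining the interior inequalities furnished by \lemref{circint}, and then bound the intersection in order to conclude finite volume in $\H^7$. The numbering of rows $10$ through $19$ in \eqref{coords7} is arranged precisely so that the ``flat'' spheres (those with $b=0$) give linear constraints that telescope, while a single spherical wall with $b>0$ closes the argument.

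For the empty-interior step I would apply \lemref{circint} to the flat hyperspheres, rows $10$–$14$. Each of rows $10,11,12,13$ has $\hat b=0$ together with a single opposite pair of nonzero entries in $bz$, so the resulting conditions are linear and cascade: row $10$ ($3.7$) gives $-x_6>0$, i.e. $x_6<0$; row $11$ ($3.6$) gives $x_5<x_6$; row $12$ ($3.5$) gives $x_4<x_5$; and row $13$ ($3.4$) gives $x_2<x_4$. Chaining these produces $x_2<x_4<x_5<x_6<0$, so in particular $x_2<0$ at any hypothetical interior point. I would then invoke \lemref{circint} for row $18$ ($2.8$), which has $b=\frac{\sqrt6-\sqrt2}{2}>0$, center at $x_2=\sqrt2/b=\sqrt3+1$, and $1/b^2=2+\sqrt3$; its interior condition is $2+\sqrt3>x_1^2+(x_2-(\sqrt3+1))^2+x_3^2+x_4^2+x_5^2+x_6^2\ge(x_2-(\sqrt3+1))^2$. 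Because $x_2<0$ forces $(x_2-(\sqrt3+1))^2>(\sqrt3+1)^2=4+2\sqrt3=2(2+\sqrt3)$, we would obtain $2+\sqrt3>2(2+\sqrt3)$, a contradiction, so the mutual interior is empty.

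For finite volume, the same inequality from row $18$ does double duty: since every summand on the right is strictly less than $2+\sqrt3$, each coordinate of a point in the intersection obeys a uniform bound, so the intersection is confined to a bounded Euclidean region centered at $(0,\sqrt3+1,0,0,0,0)$. As row $18$'s hemisphere has finite radius, its Poincar\'e extension also bounds the height coordinate, so the extended polytope lies inside a bounded portion of the upper half-space model of $\H^7$; combined with the emptiness of the floor-level intersection just established, its closure avoids $\partial\H^7=\R^6$. A bounded polytope whose closure misses the boundary is relatively compact, hence of finite volume, exactly as in the $n=6$ argument.

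The main obstacle I anticipate is bookkeeping rather than conceptual. The whole argument hinges on the cascade of flat-sphere inequalities actually reaching the coordinate ($x_2$) in which the curved wall $18$ is centered, and on the numerology $(\sqrt3+1)^2=2(2+\sqrt3)$ aligning so that the final comparison is strict; the delicate check is therefore that rows $10$–$13$ are all oriented ``downward'' and connect $x_6$ down to $x_2$ without a gap. Should the sign of the pivotal coordinate fail to be pinned down, the contradiction would evaporate, so verifying the orientations and the exact cancellation is where care is required; the finite-volume conclusion then follows formally from boundedness together with empty interior.
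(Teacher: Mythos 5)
Your proof is correct and takes essentially the same route as the paper's: chain the linear interior conditions from rows $10$--$13$ (via \lemref{circint}) to get $x_2<x_4<x_5<x_6<0$, then contradict row $18$'s interior condition using $(\sqrt3+1)^2=2\left(\tfrac{\sqrt6+\sqrt2}{2}\right)^2$, and reuse that same inequality to bound every coordinate, concluding finite volume from boundedness plus non-intersection with $\partial\H^7$. The only discrepancy is cosmetic: the paper states the coordinate bounds with radius $\tfrac{\sqrt6-\sqrt2}{2}$ where your (directly justified) bound is $\tfrac{\sqrt6+\sqrt2}{2}$, which changes nothing in the boundedness conclusion.
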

\begin{proof}
We first show that the configuration has empty interior: 
\begin{align}
-x_6&>0&&\tag{3.7.10}\\
\implies x_6&<0\nonumber\\
-\frac{x_5}{\sqrt{2}}+\frac{x_6}{\sqrt{2}}&>0&&\tag{3.7.11}\\
\implies x_5<x_6&<0\nonumber\\
-\frac{x_4}{\sqrt{2}}+\frac{x_5}{\sqrt{2}}&>0&&\tag{3.7.12}\\
\implies x_4<x_5&<0\nonumber\\
-\frac{x_2}{\sqrt{2}}+\frac{x_4}{\sqrt{2}}&>0&&\tag{3.7.13}\\
\implies x_2<x_4&<0\nonumber\\
\label{ineq379}\left(\frac{\sqrt{6}+\sqrt{2}}{2}\right)^2-x_1^2-\left(\sqrt{3}+1-x_2\right)^2-x_3^2-x_4^2-x_5^2-x_6^2&>0&&\tag{3.7.18}\\
\implies\left(\frac{\sqrt{6}+\sqrt{2}}{2}\right)^2>x_1^2+\left(\sqrt{3}+1-x_2\right)^2+x_3^2+x_4^2+x_5^2+x_6^2&\ge\left(\sqrt{3}+1-x_2\right)^2\nonumber\\
&>\left(\sqrt{3}+1\right)^2\nonumber\\
&=2\left(\frac{\sqrt{6}+\sqrt{2}}{2}\right)^2,\nonumber
\end{align}

\noindent a contradiction. Hence no point $(x_i)_{i=1}^6\in\R^6$ lies in the mutual interior of the specified configuration. 

\eqref{ineq379} gives bounds for each coordinate:
\begin{align*}
&\left(\frac{\sqrt{6}-\sqrt{2}}{2}\right)^2-x_1^2-\left(\sqrt{3}+1-x_2\right)^2-\sum\limits_{i=3}^6x_i^2>0\\
\implies&\left(\frac{\sqrt{6}-\sqrt{2}}{2}\right)^2>x_1^2+\left(\sqrt{3}+1-x_2\right)^2+\sum\limits_{i=3}^6x_i^2\ge x_i^2\\
\implies&\abs{x_i}\le\frac{\sqrt{6}-\sqrt{2}}{2}
\end{align*}

\noindent for $1\le i\le6,i\neq2$ and $\frac{2\sqrt{3}+2-\sqrt{6}+\sqrt{2}}{2}\le x_2\le\frac{2\sqrt{3}+2+\sqrt{6}-\sqrt{2}}{2}$. Since the intersection of the respective Poincar\'{e} extensions of the circles is bounded and does not meet the boundary of $\H^7$, it must be of finite volume.
\end{proof}

\begin{thm}
\eqref{coords7} generates a sphere packing in $\R^6$ through the cluster $\{14\}$. 
\end{thm}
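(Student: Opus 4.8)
The plan is to reduce the statement to a single application of the Structure Theorem (\thmref{structthm}), in exact parallel with the $d=3,n=6$ case just completed. I would take the cluster to be the singleton $\cC=\{v_{14}\}$ and the cocluster $\hat\cC$ to be the remaining nine vectors of \eqref{coords7}, so that the supercluster is the full configuration $\widetilde\cC=\cC\sqcup\hat\cC$ and $\Gamma=\langle\hat\cC\rangle$. The conclusion I want is then precisely that $\Gamma\cdot\{v_{14}\}$ is a crystallographic packing of $\R^6$.

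First I would supply the finite-covolume hypothesis of \thmref{structthm} for $\Gamma=\langle\hat\cC\rangle$. By Claim~\ref{claim:packing37} together with the preceding lemma, \eqref{coords7} has empty interior in $\R^6$ and extends by Poincar\'{e} extension to a polytope of finite volume in $\H^7$, so the full group $\langle\widetilde\cC\rangle$ is a lattice. Since $\hat\cC$ differs from $\widetilde\cC$ only by deletion of the single wall $v_{14}$, the fundamental polytope of $\langle\hat\cC\rangle$ is obtained by reflecting this finite-volume polytope across that wall (the doubling mechanism validated by \thmref{validatedoubling}); provided $v_{14}$ meets every other wall only orthogonally, tangentially, or disjointly, $\langle\hat\cC\rangle$ is then an index-two, hence still finite-covolume, subgroup of $\langle\widetilde\cC\rangle$. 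Thus the covolume hypothesis and the second cluster hypothesis are governed by the same geometric fact, and I would establish them together.

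Next I would verify the two bulleted hypotheses of \thmref{structthm}. Because $\cC$ is a singleton, the first (any two cluster spheres disjoint or tangent) is vacuous. For the second, I would read off the row of the displayed Gram matrix indexed by $v_{14}$ and confirm that every off-diagonal entry is $0$ (orthogonal), $\pm1$ (tangent), or of modulus exceeding $1$ (disjoint, a $\cosh d$ value), with no genuine $\cos\theta$ entry of modulus strictly between $0$ and $1$. Equivalently, in the Coxeter diagram for \eqref{coords7} the node $14$ must be joined to every other node only by a thick, dashed, or absent edge, which by the remark following \defref{Cox} is exactly what certifies $\{14\}$ as a cluster. Once this is checked, \thmref{structthm} applies verbatim and yields the packing.

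I expect the crux to be this last verification. A single transverse intersection of $v_{14}$ with a cocluster wall at a non-right angle would simultaneously violate the orthogonal/tangent/disjoint requirement and obstruct the index-two covolume argument of the second paragraph, so the entire proof hinges on $v_{14}$ carrying no solid (dihedral-angle) edge in the diagram. Consequently, producing and validating the Coxeter diagram of \eqref{coords7}---and scrutinizing the row of the Gram matrix attached to $v_{14}$ to rule out any such edge---is the step I would check most carefully; the finite-covolume claim for $\Gamma$ and the final invocation of \thmref{structthm} then follow formally.
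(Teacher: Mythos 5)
Your skeleton is the same as the paper's: the whole proof is one application of \thmref{structthm} with $\cC=\{v_{14}\}$ and $\hat\cC$ the remaining nine walls, the first bullet vacuous for a singleton cluster, the second read off the Coxeter diagram, and the finite-volume input taken from the lemma preceding the theorem. However, your second paragraph contains a genuine error. The cocluster group $\Gamma=\langle\hat\cC\rangle$ is \emph{not} an index-two subgroup of $\widetilde\Gamma=\langle\widetilde\cC\rangle$, and it cannot have finite covolume. First, deleting the wall $v_{14}$ is not the doubling of \thmref{validatedoubling}: doubling produces the group of $\nye{V}_{14}\cup 14.\nye{V}_{14}$, i.e.\ the subgroup of words containing $R_{v_{14}}$ an even number of times, which is indeed of small index; the group $\langle\nye{V}_{14}\rangle$ that you actually need is a much smaller subgroup of that one. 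Second, reflection in $v_{14}$ normalizes $\Gamma$ only when $v_{14}$ is orthogonal to every cocluster wall, and here $v_{14}$ is tangent to $v_{18}$. Third, and decisively: if $\Gamma$ had finite index in the lattice $\widetilde\Gamma$, it would itself be a lattice and so would have full limit set $\partial\H^{7}$; but the limit set of a group whose orbit $\Gamma\cdot\cC$ is a packing lies in the complement of the interiors of the packing spheres (an open ball inside one packing sphere meets no other sphere of the packing), a proper closed subset. Thinness of $\Gamma$ is the whole point of crystallographic packings. The paper's wording of \thmref{structthm} does place the covolume hypothesis on $\Gamma$, but read literally that hypothesis is never satisfiable by a nontrivial packing; the operative hypothesis, as in \cite{KN18} and as used in practice throughout the paper, is that the \emph{supergroup} $\widetilde\Gamma=\langle\widetilde\cC\rangle$ is a lattice---which is exactly what the preceding lemma supplies. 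No descent from $\widetilde\Gamma$ to $\Gamma$ is needed, and none is possible.

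A second caution concerns the step you rightly call the crux. If you actually carry out your proposed inspection on the paper's own data, it does not go through: the displayed Gram matrix of \eqref{coords7} has the entry $-\tfrac{1}{2}$ in the row of $v_{14}$ against $v_{13}$ (and a direct computation from the coordinates gives $\ip{v_{13}}{v_{14}}_Q=-\tfrac{1}{2}$ as well), which is neither $0$, nor $\pm 1$, nor of modulus greater than $1$. This contradicts the Coxeter diagram used in the paper's proof, where node $14$ is joined only to node $18$. So your test would not ``confirm'' the cluster condition; it would expose an inconsistency between the coordinates/Gram matrix and the diagram (presumably a sign or labeling error in \eqref{coords7}) that must be resolved before either your argument or the paper's one-line proof is complete.
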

\begin{proof}
Application of \thmref{structthm} to the following Coxeter diagram proves the result. 

\begin{center}
\begin{tikzpicture}[scale=1.5]
    \coordinate (one) at (0, 0);
    \coordinate (four) at (2,1);
    \coordinate (seven) at (3,1);
    \coordinate (eight) at (4,1);
    \coordinate (six) at (4, 0);
    \coordinate (two) at (0, 1);
    \coordinate (three) at (1,1);
    \coordinate (ten) at (1,0);
    \coordinate (five) at (2,0);
    \coordinate (nine) at (3,0);
    
    \draw[thick, double distance = 2pt] (one) -- (two);
    \draw[thick] (two) -- (three);
    \draw[thick] (three) -- (four);
    \draw[thick] (three) -- (ten);
    \draw[thick, double distance = 2.5pt] (six) -- (eight);
    \draw[thick, double distance = .3pt] (six) -- (eight);
    \draw[thick, double distance = 2.5pt] (four) -- (seven);
    \draw[thick, double distance = .3pt] (four) -- (seven); 
    \draw[thick] (six) -- (nine);
    \draw[ultra thick] (five) -- (nine);    
    \draw[ultra thick] (seven) -- (eight);
    \draw[thick, double distance=2pt] (four) -- (nine);

{    \filldraw[fill=white] (one) circle (2pt) node[below] {\scriptsize 10};}
{    \filldraw[fill=white] (two) circle (2pt) node[above] {\scriptsize 11};}
{    \filldraw[fill=white] (three) circle (2pt) node[above] {\scriptsize 12};}
{    \filldraw[fill=white] (four) circle (2pt) node[above] {\scriptsize 13};}
{    \filldraw[fill=red] (five) circle (2pt) node[below] {\scriptsize 14};}
{    \filldraw[fill=white] (six) circle (2pt) node[below] {\scriptsize 15};}
{    \filldraw[fill=white] (seven) circle (2pt) node[above] {\scriptsize 16};}
{    \filldraw[fill=white] (eight) circle (2pt) node[above] {\scriptsize 17};}
{    \filldraw[fill=white] (nine) circle (2pt) node[below] {\scriptsize 18};}
{    \filldraw[fill=white] (ten) circle (2pt) node[below] {\scriptsize 19};}
\end{tikzpicture}
\end{center}
\end{proof}

\subsection{$d=3,n=8$}\label{pack38}

\begin{claim}\label{claim:packing38}
The following inversive coordinates generate a subgroup of the group of isometries obtained by Vinberg's algorithm in \cite{Mcl10}.

\begin{align}\label{coords8}
\begin{array}{c|ccccccccc|l}
&\hat{b} & b & bx_1 & bx_2 & bx_3 & bx_4 & bx_5 & bx_6 & bx_7 & \text{def'd as}:\\\hline
11& 0 & 0 & 0 & 0 & 0 & 0 & 0 & 0 & -1 & 3.8\\
12& 0 & 0 & 0 & 0 & 0 & 0 & 0 & -\frac{\sqrt{2}}{2} & \frac{\sqrt{2}}{2} & 3.7\\
13& 0 & 0 & 0 & 0 & 0 & 0 & -\frac{\sqrt{2}}{2} & \frac{\sqrt{2}}{2} & 0 & 3.6\\
14& 0 & 0 & 0 & 0 & 0 & -\frac{\sqrt{2}}{2} & \frac{\sqrt{2}}{2} & 0 & 0 & 3.5\\
15& 0 & 0 & 0 & -\frac{\sqrt{2}}{2} & 0 & \frac{\sqrt{2}}{2} & 0 & 0 & 0 & 3.4\\
16& 0 & 0 & 0 & -\frac{\sqrt{2}}{2} & \frac{\sqrt{2}}{2} & 0 & 0 & 0 & 0 & 3\\
17& -\frac{\sqrt{2}}{2} & \frac{\sqrt{2}}{2} & \frac{\sqrt{2}}{2} & 0 & 0 & 0 & 0 & 0 & 0 & 1\\
18& \sqrt{2} & \sqrt{2} & 0 & \frac{\sqrt{6}}{2} & \frac{\sqrt{6}}{2} & 0 & 0 & 0 & 0 & (2.1.2.9).3.(2.1).\nye{9}\\
19& \frac{\sqrt{2}+\sqrt{6}}{2} & \frac{\sqrt{2}-\sqrt{6}}{2} & 0 & 0 & 0 & 0 & 0 & 0 & 0 & 9\\
20& \frac{\sqrt{2}+\sqrt{6}}{2} & \frac{\sqrt{6}-\sqrt{2}}{2} & 0 & \sqrt{2} & 0 & 0 & 0 & 0 & 0 & 2.9\\
21& \frac{\sqrt{2}+\sqrt{6}}{2} & \frac{\sqrt{6}-\sqrt{2}}{2} & \frac{\sqrt{2}}{2} & \frac{\sqrt{2}}{2} & \frac{\sqrt{2}}{2} & \frac{\sqrt{2}}{2} & 0 & 0 & 0 & 3.10\\
\end{array}
\end{align}

\end{claim}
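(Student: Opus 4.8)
The plan is to read the final column of \eqref{coords8} as the proof itself: each of the eleven vectors $v_{11},\dots,v_{21}$ is named there as a finite word, in the action notation $b.a=v_aR_{v_b}$, built from the ten generators $v_1,\dots,v_{10}$ produced by Vinberg's algorithm in \cite{Mcl10}. Once each tabulated vector is confirmed to equal its claimed word, the containment $\langle v_{11},\dots,v_{21}\rangle\le\langle v_1,\dots,v_{10}\rangle$ is automatic, which is exactly the assertion of the claim. So the work splits into a verification step (the coordinates really are those orbit representatives) and a formal step (membership of the generated reflections in the Vinberg group).

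First I would verify the ``def'd as'' column by direct computation with \defref{reflmatrix}, applying $R_{\hat v}=I+2Q\hat v^T\hat v$ to the relevant generators. The rows $16=3$, $17=1$, and $19=9$ are original generators merely relabeled, so nothing is needed there. The rows $11=3.8,\,12=3.7,\,13=3.6,\,14=3.5,\,15=3.4$, together with $20=2.9$ and $21=3.10$, are each a single reflection and follow from one application of the formula. As an independent consistency check, I would confirm that the pairwise inner products $\langle v_i,v_j\rangle_Q$ of the computed coordinates reproduce the displayed Gram matrix $VQV^T$; this does not by itself establish group membership, but it guards against transcription errors in the coordinate table.

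Second, with the column verified, the subgroup statement follows by a recursive application of \lemref{refrelation}, which gives $R_{v_aR_{v_b}}=R_{v_b}R_{v_a}R_{v_b}$. Whenever a vector is written as $b.a$, its reflection is therefore a product of reflections about $v_a$ and $v_b$; peeling one reflection at a time off each word in the column rewrites every $R_{v_i}$ for $11\le i\le 21$ as a product of the original reflections $R_{v_1},\dots,R_{v_{10}}$ (noting that $\nye{9}$ reverses orientation and hence leaves the associated reflection unchanged, since $R_{-v}=R_v$). Each generator of the new group thus lies in the Vinberg group, and closure is immediate.

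The only genuine obstacle is the computational check of row $18=(2.1.2.9).3.(2.1).\nye{9}$: unlike the other entries it is a long word that does not collapse to a single reflection, so verifying it against the stated coordinates $\left(\sqrt{2},\sqrt{2},0,\tfrac{\sqrt{6}}{2},\tfrac{\sqrt{6}}{2},0,0,0,0\right)$ requires composing several reflections by hand (or by machine). Everything conceptual is already supplied by \lemref{refrelation}; the substance of the claim is entirely the bookkeeping that the tabulated coordinates are the named orbit representatives, after which the subgroup property is formal.
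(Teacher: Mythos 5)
Your proposal is correct and is essentially the paper's own justification: the paper gives no separate proof of this claim, relying exactly on the ``def'd as'' column of \eqref{coords8} (which expresses each of the eleven vectors as a word in the ten Vinberg generators) together with \lemref{refrelation} to rewrite each associated reflection as a product of the original ones, with the Gram matrix serving as the consistency check. Your explicit treatment of the orientation reversal $R_{-v}=R_v$ and of the long word defining row 18 just spells out the bookkeeping the paper leaves implicit.
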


This is \eqref{coords8}'s Gram matrix: 

\begin{align}
\left(
\begin{array}{ccccccccccc}
 -1 & \frac{1}{\sqrt{2}} & 0 & 0 & 0 & 0 & 0 & 0 & 0 & 0 & 0 \\
 \frac{1}{\sqrt{2}} & -1 & \frac{1}{2} & 0 & 0 & 0 & 0 & 0 & 0 & 0 & 0 \\
 0 & \frac{1}{2} & -1 & \frac{1}{2} & 0 & 0 & 0 & 0 & 0 & 0 & 0 \\
 0 & 0 & \frac{1}{2} & -1 & \frac{1}{2} & 0 & 0 & 0 & 0 & 0 & \frac{1}{2} \\
 0 & 0 & 0 & \frac{1}{2} & -1 & -\frac{1}{2} & 0 & \frac{\sqrt{3}}{2} & 0 & 1 & 0 \\
 0 & 0 & 0 & 0 & -\frac{1}{2} & -1 & 0 & 0 & 0 & 1 & 0 \\
 0 & 0 & 0 & 0 & 0 & 0 & -1 & 0 & \frac{\sqrt{3}}{2} & \frac{1}{2} & 0 \\
 0 & 0 & 0 & 0 & \frac{\sqrt{3}}{2} & 0 & 0 & -1 & 1 & 0 & 0 \\
 0 & 0 & 0 & 0 & 0 & 0 & \frac{\sqrt{3}}{2} & 1 & -1 & 0 & 0 \\
 0 & 0 & 0 & 0 & 1 & 1 & \frac{1}{2} & 0 & 0 & -1 & 0 \\
 0 & 0 & 0 & \frac{1}{2} & 0 & 0 & 0 & 0 & 0 & 0 & -1 \\
\end{array}
\right).
\end{align}

\begin{lemma}
\eqref{coords8} has empty interior in $\R^7$, and extends by Poincar\'{e} extension to a hyperbolic polytope of finite volume. 
\end{lemma}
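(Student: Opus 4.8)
The plan is to mirror the arguments in \secref{pack36} and \secref{pack37}, since \eqref{coords8} is the one-coordinate stabilization of \eqref{coords6} and \eqref{coords7}: it repeats the same block structure with one extra spatial direction $x_7$ (and the extra mirror $3.8$ in row $11$). First I would apply \lemref{circint} to each row to read off its interior condition. The six rows with $b=0$, namely rows $11,\dots,16$ (the mirrors $3.8,3.7,3.6,3.5,3.4,3$), are hyperplanes; five of them chain together, with row $11$ giving $x_7<0$, row $12$ giving $x_6<x_7$, row $13$ giving $x_5<x_6$, row $14$ giving $x_4<x_5$, and row $15$ giving $x_2<x_4$. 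Thus any point in the mutual interior obeys
\[
x_2<x_4<x_5<x_6<x_7<0,
\]
so in particular $x_2<0$. (Row $16$ only contributes the side condition $x_2<x_3$, which is not needed.)

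Next I would extract the contradiction from the genuine sphere in row $20$, the sphere $2.9$. Since it has $bx_2=\sqrt2$ with $b=\tfrac{\sqrt6-\sqrt2}{2}$ and all other coordinate entries $0$, its center sits at $x_2=\sqrt2/b=\sqrt3+1$ (other coordinates $0$) with radius $1/b=\tfrac{\sqrt6+\sqrt2}{2}$. Its interior condition, $(3.8.20)$, then reads
\[
\left(\tfrac{\sqrt6+\sqrt2}{2}\right)^2>x_1^2+(\sqrt3+1-x_2)^2+x_3^2+x_4^2+x_5^2+x_6^2+x_7^2\ge(\sqrt3+1-x_2)^2.
\]
Feeding in $x_2<0$ gives $(\sqrt3+1-x_2)^2>(\sqrt3+1)^2$, and the identity $(\sqrt3+1)^2=4+2\sqrt3=2\left(\tfrac{\sqrt6+\sqrt2}{2}\right)^2$ forces $\left(\tfrac{\sqrt6+\sqrt2}{2}\right)^2>2\left(\tfrac{\sqrt6+\sqrt2}{2}\right)^2$, which is absurd. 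Hence no point of $\R^7$ lies in the mutual interior.

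For the finite-volume claim I would reuse $(3.8.20)$: bounding the left side by the sum of squares shows each coordinate lies in a fixed bounded interval (so $|x_i|$ is bounded for every $i$, after recentering $x_2$ about $\sqrt3+1$), whence the projection of the configuration to $\R^7$ is bounded. Since the mutual interior meets $\R^7$ nowhere, the intersection of the Poincar\'e extensions of the configuration is a bounded subset of $\H^8$ that does not touch the boundary $\R^7\cup\{\infty\}$; it is therefore compact and of finite volume.

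The main obstacle is bookkeeping rather than a new idea: I must verify that the descending chain of mirror inequalities still threads from row $11$ down to $x_2$ once the extra direction $x_7$ and mirror $3.8$ are inserted, and that row $20$ remains the unique sphere yielding the clean identity $(\sqrt3+1)^2=2\left(\tfrac{\sqrt6+\sqrt2}{2}\right)^2$. Because \eqref{coords8} reproduces the pattern of \eqref{coords6} and \eqref{coords7} verbatim apart from this added coordinate, both checks are routine and the argument transfers with only the index shifts recorded above.
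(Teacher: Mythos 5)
Your proposal is correct and takes essentially the same route as the paper's own proof: the identical chain of hyperplane inequalities (rows 11--15, the paper's (3.8.11)--(3.8.15)) giving $x_2<x_4<x_5<x_6<x_7<0$, the identical contradiction from the sphere $2.9$ (the paper's (3.8.20)) via the identity $\left(\sqrt{3}+1\right)^2=2\left(\tfrac{\sqrt{6}+\sqrt{2}}{2}\right)^2$, and the same boundedness-plus-empty-interior argument for finite volume. The only cosmetic difference is in the last step, where you read the coordinate bounds directly off (3.8.20) with the radius $\tfrac{\sqrt{6}+\sqrt{2}}{2}$, while the paper quotes the tighter constant $\tfrac{\sqrt{6}-\sqrt{2}}{2}$; either bound suffices for boundedness, and your version is if anything the more directly justified one.
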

\begin{proof}
We first show that the configuration has empty interior: 
\begin{align}
-x_7&>0&&\tag{3.8.11}\\
\implies x_7&<0\nonumber\\
-\frac{x_6}{\sqrt{2}}+\frac{x_7}{\sqrt{2}}&>0&&\tag{3.8.12}\\
\implies x_6<x_7&<0\nonumber\\
-\frac{x_5}{\sqrt{2}}+\frac{x_6}{\sqrt{2}}&>0&&\tag{3.8.13}\\
\implies x_5<x_6&<0\nonumber\\
-\frac{x_4}{\sqrt{2}}+\frac{x_5}{\sqrt{2}}&>0&&\tag{3.8.14}\\
\implies x_4<x_5&<0\nonumber\\
-\frac{x_2}{\sqrt{2}}+\frac{x_4}{\sqrt{2}}&>0&&\tag{3.8.15}\\
\implies x_2<x_4&<0\nonumber\\
\label{ineq3810}\left(\frac{\sqrt{6}+\sqrt{2}}{2}\right)^2-x_1^2-\left(\sqrt{3}+1-x_2\right)^2-\sum\limits_{i=3}^7x_i^2&>0&&\tag{3.8.20}\\
\implies\left(\frac{\sqrt{6}+\sqrt{2}}{2}\right)^2>x_1^2+\left(\sqrt{3}+1-x_2\right)^2+\sum\limits_{i=3}^7x_i^2&\ge\left(\sqrt{3}+1-x_2\right)^2\nonumber\\
&>\left(\sqrt{3}+1\right)^2\nonumber\\
&=2\left(\frac{\sqrt{6}+\sqrt{2}}{2}\right)^2,\nonumber
\end{align}

\noindent a contradiction. Hence no point $(x_i)_{i=1}^7\in\R^7$ lies in the mutual interior of the specified configuration. 

\eqref{ineq3810} gives bounds for each coordinate:
\begin{align*}
\left(\frac{\sqrt{6}-\sqrt{2}}{2}\right)^2-x_1^2-\left(\sqrt{3}+1-x_2\right)^2-\sum\limits_{i=3}^7x_i^2&>0\\
\implies\left(\frac{\sqrt{6}-\sqrt{2}}{2}\right)^2>x_1^2+\left(\sqrt{3}+1-x_2\right)^2+\sum\limits_{i=3}^7x_i^2&\ge x_i^2\\
\implies\abs{x_i}&\le\frac{\sqrt{6}-\sqrt{2}}{2}
\end{align*}

\noindent for $1\le i\le7,i\neq2$ and $\frac{2\sqrt{3}+2-\sqrt{6}+\sqrt{2}}{2}\le x_2\le\frac{2\sqrt{3}+2+\sqrt{6}-\sqrt{2}}{2}$. Since the intersection of the respective Poincar\'{e} extensions of the circles is bounded and does not meet the boundary of $\H^7$, it must be of finite volume.
\end{proof}

\begin{thm}
\eqref{coords8} generates a sphere packing in $\R^7$ through the cluster $\{16\}$. 
\end{thm}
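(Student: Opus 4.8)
The plan is to verify the two hypotheses of \thmref{structthm} for the supercluster \eqref{coords8} under the splitting $\cC=\{16\}$ and $\hat\cC=\{11,12,13,14,15,17,18,19,20,21\}$, and then invoke the theorem directly; this is the same route taken for the analogous packings in \secref{pack36} and \secref{pack37}.

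First I would settle the finite-covolume hypothesis on $\Gamma=\langle\hat\cC\rangle$, which comes from the preceding lemma: it exhibits \eqref{coords8} as bounding a finite-volume polytope in $\H^8$ under Poincar\'e extension, and the cocluster cuts out a finite-covolume reflection group on the same footing as in the $n=6$ and $n=7$ cases. I would then convert the displayed Gram matrix into a Coxeter diagram, reading each entry $g_{ij}$ through \defref{Gram} and \defref{Cox}: an entry $g_{ij}=0$ gives no edge (orthogonal walls), $|g_{ij}|=1$ a thick edge (tangency, possibly at $\infty$), $|g_{ij}|>1$ a dashed edge (disjoint walls), and $g_{ij}=\cos(\pi/k)$ the appropriate $(k-2)$-fold edge.

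With the diagram in hand, the first hypothesis of \thmref{structthm} is automatic, since a one-sphere cluster trivially has its members pairwise disjoint or tangent, so everything collapses to the second hypothesis: scanning the row of node $16$ and confirming that each off-diagonal entry there is either $0$ or of absolute value at least $1$, i.e.\ that node $16$ is joined to every cocluster node only by a thick, dashed, or absent edge. I expect this to be the crux, and the only place the construction could fail, because it is where the geometry of this particular configuration actually enters: one must certify that the lone cluster sphere meets each of the ten generating spheres tangentially, orthogonally, or disjointly, so that iterating reflections of $\cC$ through $\hat\cC$ never produces overlapping images and $\Gamma\cdot\cC$ is a genuine packing. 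Once this adjacency pattern is confirmed and displayed as the Coxeter diagram with node $16$ marked as the cluster, \thmref{structthm} applies verbatim and yields the asserted packing in $\R^7$.
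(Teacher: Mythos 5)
Your plan follows the paper's proof exactly in outline --- the paper's entire argument is ``apply \thmref{structthm} to the Coxeter diagram of \eqref{coords8} with node $16$ as the cluster,'' with the covolume hypothesis coming from the preceding lemma --- so there is no methodological divergence to report. The genuine gap is that the one step you correctly single out as the crux is left as an expectation rather than a computation, and when it is actually carried out on the paper's own data it \emph{fails}. Row $16$ of the printed Gram matrix is $\left(0,0,0,0,-\tfrac{1}{2},-1,0,0,0,1,0\right)$: the entry $\ip{v_{16}}{v_{15}}_Q=-\tfrac{1}{2}$ is neither $0$ nor of absolute value at least $1$, so the scan you propose does not certify the cluster condition. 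This entry is not a misprint in the Gram matrix alone; it is forced by \eqref{coords8}, since wall $16$ is the mirror $\{x_2=x_3\}$ (Vinberg's wall $3$) and wall $15$ is $\{x_2=x_4\}$ (the wall $3.4$, i.e.\ wall $4$ reflected through wall $3$), and these cross at angle $\pi/3$ exactly as walls $3$ and $4$ do. The Coxeter diagram displayed in the paper's proof silently omits this edge (node $16$ is drawn joined only to node $20$ by a thick line), so the diagram is inconsistent with the paper's own coordinates and Gram matrix --- and it is the diagram that the proof leans on.

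This matters for the conclusion, not just the bookkeeping: with $\cC=\{16\}$ and $15\in\hat\cC$, the reflection $R_{v_{15}}(v_{16})=v_{16}+2\ip{v_{16}}{v_{15}}_Q\,v_{15}=v_{16}-v_{15}$ is the mirror $\{x_3=x_4\}$, which crosses $\{x_2=x_3\}$; both spheres lie in $\Gamma\cdot\cC$, so with the printed coordinates the orbit contains spheres with overlapping interiors and is not a packing. Completing your proposal therefore requires repairing the configuration (or the coordinates of wall $15$) so that row $16$ genuinely passes the scan, not merely citing the displayed diagram. A smaller gap of the same kind: the preceding lemma gives finite volume for the polytope bounded by all eleven walls, whereas \thmref{structthm} requires $\Gamma=\left\langle\hat\cC\right\rangle$, generated by the ten cocluster walls alone, to act with finite covolume; you assert this ``on the same footing'' as the $n=6,7$ cases, but neither you nor the paper actually derives it.
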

\begin{proof}
Application of \thmref{structthm} to the following Coxeter diagram proves the result. 

\begin{center}
\begin{tikzpicture}[scale=1.5]
    \coordinate (six) at (0, 0);
    \coordinate (ten) at (1, 0);
    \coordinate (five) at (2, 0);
    \coordinate (four) at (3, 0);
    \coordinate (three) at (4, 0);
    \coordinate (two) at (5,0);
    \coordinate (one) at (6,0);
    \coordinate (seven) at (0, 1);
    \coordinate (nine) at (1,1);
    \coordinate (eight) at (2,1);
    \coordinate (eleven) at (3,1);

    \draw[thick, double distance=2.5pt] (five) -- (eight);
    \draw[thick, double distance=0.3pt] (five) -- (eight);
    \draw[thick, double distance=2.5pt] (seven) -- (nine);
    \draw[thick, double distance=0.3pt] (seven) -- (nine);
    \draw[ultra thick] (eight) -- (nine);
    \draw[ultra thick] (six) -- (ten);
    \draw[ultra thick] (ten) -- (five);
    \draw[thick] (four) -- (eleven);
    \draw[thick] (four) -- (five);
    \draw[thick, double distance=2pt] (one) -- (two);
    \draw[thick] (two) -- (three);
    \draw[thick] (three) -- (four);
    \draw[thick] (seven) -- (ten);
    
{    \filldraw[fill=white] (one) circle (2pt) node[below] {\scriptsize 11};}
{    \filldraw[fill=white] (two) circle (2pt) node[below] {\scriptsize 12};}
{    \filldraw[fill=white] (three) circle (2pt) node[below] {\scriptsize 13};}
{    \filldraw[fill=white] (four) circle (2pt) node[below] {\scriptsize 14};}
{    \filldraw[fill=white] (five) circle (2pt) node[below] {\scriptsize 15};}
{    \filldraw[fill=red] (six) circle (2pt) node[below] {\scriptsize 16};}
{    \filldraw[fill=white] (seven) circle (2pt) node[above] {\scriptsize 17};}
{    \filldraw[fill=white] (eight) circle (2pt) node[above] {\scriptsize 18};}
{    \filldraw[fill=white] (nine) circle (2pt) node[above] {\scriptsize 19};}
{    \filldraw[fill=white] (ten) circle (2pt) node[below] {\scriptsize 20};}
{    \filldraw[fill=white] (eleven) circle (2pt) node[above] {\scriptsize 21};}
\end{tikzpicture}
\end{center}
\end{proof}

\subsection{$d=3,n=10$}\label{pack310}

\begin{claim}\label{claim:packing310}
The following inversive coordinates generate a subgroup of the group of isometries obtained by Vinberg's algorithm in \cite{Mcl10}.

\setcounter{MaxMatrixCols}{20}
\begin{align}\label{coords10}
\begin {array}{c|ccccccccccc|l}
&\hat{b} & b & bx_1 & bx_2 & bx_3 & bx_4 & bx_5 & bx_6 & bx_7 & bx_8 & bx_9 & \text{def'd as}:\\\hline
15& 0 & 0 & 0 & 0 & 0 & 0 & 0 & 0 & 0 & 0 & - 1 & 3.10\\
16& 0 & 0 & 0 & 0 & 0 & 0 & 0 & 0 & 0 & - \frac {\sqrt {2}} {2} & \frac{\sqrt {2}} {2} & 3.9\\
17& 0 & 0 & 0 & 0 & 0 & 0 & 0 & 0 & - \frac {\sqrt {2}} {2} & \frac{\sqrt {2}} {2} & 0 & 3.8\\
18& 0 & 0 & 0 & 0 & 0 & 0 & 0 & - \frac {\sqrt {2}} {2} & \frac {\sqrt{2}} {2} & 0 & 0 & 3.7\\
19& 0 & 0 & 0 & 0 & 0 & 0 & - \frac {\sqrt {2}} {2} & \frac {\sqrt {2}}{2} & 0 & 0 & 0 & 3.6\\
20& 0 & 0 & 0 & 0 & 0 & - \frac {\sqrt {2}} {2} &\frac {\sqrt {2}} {2} & 0 & 0 & 0 & 0 & 3.5\\
21& 0 & 0 & 0 & - \frac {\sqrt {2}} {2} & 0 & \frac {\sqrt {2}} {2} & 0 & 0 & 0 & 0 & 0 & 3.4\\
22& 0 & 0 & 0 & - \frac {\sqrt {2}} {2} & \frac {\sqrt {2}} {2} & 0 & 0 & 0 & 0 & 0 & 0 & 3\\
23& -\frac {\sqrt {2}} {2} & \frac {\sqrt {2}} {2} & \frac {\sqrt {2}}{2} & 0 & 0 & 0 & 0 & 0 & 0 & 0 & 0 & 1\\
24& \sqrt {2} & \sqrt {2} & 0 & \frac{\sqrt{6}}{2} & \frac{\sqrt{6}}{2} & 0 & 0 & 0 & 0 & 0 & 0 & \footnotemark\\
25& 2 \left (1 + \sqrt {3} \right) & 2 \left (-1 + \sqrt {3} \right) & 1 & 1 & 1 & 1 & 1 & 1 & 1 & 1 & 1 & 3.14\\
26& \frac {\sqrt{2} + \sqrt {6}} {2} & \frac {\sqrt{2} - \sqrt {6}} {2} & 0 & 0 & 0 & 0 & 0 & 0 & 0 & 0 & 0 & 11\\
27& \frac {\sqrt{2} + \sqrt {6}} {2} & \frac {\sqrt{6} - \sqrt {6}} {2} & 0 & \sqrt {2} & 0 & 0 & 0 & 0 & 0 & 0 & 0 & 2.11\\
28& \frac {\sqrt{2} + \sqrt {6}} {2} & \frac {\sqrt{6} - \sqrt {6}} {2} & \frac {\sqrt {2}} {2} & \frac {\sqrt {2}} {2} & \frac {\sqrt{2}} {2} & \frac {\sqrt {2}} {2} & 0 & 0 & 0 & 0 & 0 & 3.12\\
29& \frac {5\sqrt{2} + \sqrt {6}} {2} & \frac {5\sqrt{2} - \sqrt {6}} {2} & \frac{\sqrt{6}}{2} & \frac{\sqrt{6}}{2} & \frac{\sqrt{6}}{2} & \frac{\sqrt{6}}{2} & \frac{\sqrt{6}}{2} & \frac{\sqrt{6}}{2} & \frac{\sqrt{6}}{2} & \frac{\sqrt{6}}{2} & 0 & 3.13
\end {array}
\end{align}
\footnotetext{$(2.1.2.11).3.(2.1).\nye{11}$}
\end{claim}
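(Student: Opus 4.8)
The plan is to read the subgroup containment directly off the final column of \eqref{coords10}. Write $\Gamma$ for the reflection group produced by Vinberg's algorithm in \cite{Mcl10}, generated by the original mirrors $v_1,\dots,v_{14}$ (the indices appearing in the ``def'd as'' column of \eqref{coords10}). Let $v_{15},\dots,v_{29}$ denote the vectors tabulated in \eqref{coords10}; the group they generate is $\<v_{15},\dots,v_{29}\>$, which by definition is generated by the reflections $R_{v_{15}},\dots,R_{v_{29}}$. It therefore suffices to show that each $R_{v_i}$ with $15\le i\le29$ already lies in $\Gamma$, for then $\<v_{15},\dots,v_{29}\>\le\Gamma$, which is exactly the assertion.

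The structural input is recorded in the ``def'd as'' column: each $v_i$ is exhibited as the image of one of the original mirrors $v_k$ (with $k\le14$) under a word in the reflections $R_{v_1},\dots,R_{v_{14}}$, read right-associatively, with $\nye{11}$ denoting the orientation reversal $11.11$. Most rows are immediate --- $v_{26}=11$ is already a mirror, and $v_{25}=3.14$, $v_{27}=2.11$, $v_{28}=3.12$, $v_{29}=3.13$ are single reflections applied to a mirror --- while the only genuinely long word is the footnoted $v_{24}=(2.1.2.11).3.(2.1).\nye{11}$. The step requiring actual computation is to certify these identities: using the right-acting reflection matrix $R_{v}=I+2Qv^{T}v$ of \defref{reflmatrix}, one applies each recorded word to the indicated original mirror and checks that the result agrees with the coordinates of the corresponding row.

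Granting the identities, the containment follows by a short induction resting on \lemref{refrelation}. If a vector $u$ has $R_u\in\Gamma$ and $1\le b\le14$, then $b.u=uR_{v_b}$ satisfies $R_{b.u}=R_{v_b}R_uR_{v_b}\in\Gamma$; moreover orientation reversal sends $v\mapsto-v$ and so leaves the reflection unchanged. Beginning from $v_1,\dots,v_{14}$, whose reflections lie in $\Gamma$ by definition, and inducting on the length of the word attached to each $v_i$, we obtain $R_{v_i}\in\Gamma$ for every $15\le i\le29$, and hence $\<v_{15},\dots,v_{29}\>\le\Gamma$.

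The only real work, and where I expect the difficulty to concentrate, is the certification in the middle step. The rows built from long words --- above all $v_{24}$, together with the vectors inherited from the unfolding of \lemref{unfold} exactly as in the $n=6,7,8$ constructions of \eqref{coords6}, \eqref{coords7}, \eqref{coords8} --- demand iterated reflection computations in which a transcription slip is easy to make. I would guard against this by cross-checking \eqref{coords10} entry by entry against those structurally identical lower-dimensional tables, whose build differs only in the ambient dimension; any row that departs from the pattern established there, rather than reflecting a genuine feature of the $n=10$ form, signals an entry to be recomputed before the identities, and with them the claim, can be certified.
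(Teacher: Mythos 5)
Your proposal is correct and matches the paper's own (largely implicit) justification: the paper certifies the claim precisely through the ``def'd as'' column, which exhibits each tabulated vector as a word in the original reflections applied to an original mirror, so that \lemref{refrelation} (equivalently, the identity $(a.b).c=a.b.a.c$) yields $R_{v_i}\in\Gamma$ by conjugation and hence the subgroup containment. Your explicit induction on word length, the observation that orientation reversal $v\mapsto -v$ fixes $R_v$, and the entry-by-entry certification against the structurally identical $n=6,7,8$ tables are exactly the reasoning the paper leaves to the reader.
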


This is \eqref{coords10}'s Gram matrix: 

\begin{align}
\left(
\begin{array}{ccccccccccccccc}
 -1 & \frac{\sqrt{2}}{2} & 0 & 0 & 0 & 0 & 0 & 0 & 0 & 0 & 1 & 0 & 0 & 0 & 0 \\
 \frac{\sqrt{2}}{2} & -1 & \frac{1}{2} & 0 & 0 & 0 & 0 & 0 & 0 & 0 & 0 & 0 & 0 & 0 &
   \frac{\sqrt{3}}{2} \\
 0 & \frac{1}{2} & -1 & \frac{1}{2} & 0 & 0 & 0 & 0 & 0 & 0 & 0 & 0 & 0 & 0 & 0 \\
 0 & 0 & \frac{1}{2} & -1 & \frac{1}{2} & 0 & 0 & 0 & 0 & 0 & 0 & 0 & 0 & 0 & 0 \\
 0 & 0 & 0 & \frac{1}{2} & -1 & \frac{1}{2} & 0 & 0 & 0 & 0 & 0 & 0 & 0 & 0 & 0 \\
 0 & 0 & 0 & 0 & \frac{1}{2} & -1 & \frac{1}{2} & 0 & 0 & 0 & 0 & 0 & 0 & \frac{1}{2} & 0 \\
 0 & 0 & 0 & 0 & 0 & \frac{1}{2} & -1 & -\frac{1}{2} & 0 & \frac{\sqrt{3}}{2} & 0 & 0 & 1 & 0 & 0
   \\
 0 & 0 & 0 & 0 & 0 & 0 & -\frac{1}{2} & -1 & 0 & 0 & 0 & 0 & 1 & 0 & 0 \\
 0 & 0 & 0 & 0 & 0 & 0 & 0 & 0 & -1 & 0 & \frac{\sqrt{2}}{2} & \frac{\sqrt{3}}{2} & \frac{1}{2} &
   0 & 0 \\
 0 & 0 & 0 & 0 & 0 & 0 & \frac{\sqrt{3}}{2} & 0 & 0 & -1 & \sqrt{6} & 1 & 0 & 0 & 2 \\
 1 & 0 & 0 & 0 & 0 & 0 & 0 & 0 & \frac{\sqrt{2}}{2} & \sqrt{6} & -1 & 0 & \sqrt{2} & 0 & 0 \\
 0 & 0 & 0 & 0 & 0 & 0 & 0 & 0 & \frac{\sqrt{3}}{2} & 1 & 0 & -1 & 0 & 0 & 1 \\
 0 & 0 & 0 & 0 & 0 & 0 & 1 & 1 & \frac{1}{2} & 0 & \sqrt{2} & 0 & -1 & 0 & \sqrt{3} \\
 0 & 0 & 0 & 0 & 0 & \frac{1}{2} & 0 & 0 & 0 & 0 & 0 & 0 & 0 & -1 & 0 \\
 0 & \frac{\sqrt{3}}{2} & 0 & 0 & 0 & 0 & 0 & 0 & 0 & 2 & 0 & 1 & \sqrt{3} & 0 & -1 \\
\end{array}
\right).
\end{align}

\begin{lemma}
\eqref{coords10} has empty interior in $\R^9$, and extends by Poincar\'{e} extension to a hyperbolic polytope of finite volume. 
\end{lemma}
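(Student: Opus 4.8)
The plan is to follow verbatim the template of the preceding lemmas in \secref{pack36}--\secref{pack38}: translate each row of \eqref{coords10} into an interior inequality via \lemref{circint}, then play the degenerate-hyperplane constraints against a single bounded sphere. First I would apply the $b=0$ case of \lemref{circint} to the seven hyperplanes numbered $15$ through $21$ (those defined as $3.10,3.9,\dots,3.5,3.4$), each of which has $\hat{b}=0$ and hence contributes one linear inequality comparing coordinates: sphere $15$ gives $-x_9>0$, and sphere $k$ for $16\le k\le 21$ has two nonzero $bx$-entries $\pm\tfrac{\sqrt2}{2}$ and so compares two successive coordinates.

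Chaining these yields the empty-interior half. Sphere $15$ forces $x_9<0$; sphere $16$ ($3.9$) then gives $x_8<x_9<0$, and successively spheres $17,18,19,20$ give $x_7<x_8$, $x_6<x_7$, $x_5<x_6$, $x_4<x_5$, all negative; finally sphere $21$ ($3.4$), which links positions $2$ and $4$, gives $x_2<x_4<0$. I would then invoke sphere $27$ ($2.11$): correcting the evident typo in its $b$-entry (it should read $\tfrac{\sqrt6-\sqrt2}{2}$, matching the analogous rows of \eqref{coords6}, \eqref{coords7}, \eqref{coords8}), this is the sphere of radius $\tfrac{\sqrt6+\sqrt2}{2}$ centered at $(0,\sqrt3+1,0,\dots,0)$, so its $b>0$ interior condition from \lemref{circint} reads
\[
\left(\frac{\sqrt6+\sqrt2}{2}\right)^2-x_1^2-\left(\sqrt3+1-x_2\right)^2-\sum_{i=3}^9 x_i^2>0.
\]
Dropping all but the $(\sqrt3+1-x_2)^2$ term and using $x_2<0$ gives $\left(\tfrac{\sqrt6+\sqrt2}{2}\right)^2>(\sqrt3+1-x_2)^2>(\sqrt3+1)^2$, contradicting the identity $(\sqrt3+1)^2=4+2\sqrt3=2\left(\tfrac{\sqrt6+\sqrt2}{2}\right)^2$. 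Hence the mutual interior in $\R^9$ is empty.

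For finite volume I would reuse the same displayed inequality, exactly as in the $n=7,8$ cases: it confines $x_1$, each $x_i$ with $i\ge3$, and (through the shifted term) $x_2$ to bounded intervals. The Euclidean locus cut out by the configuration is therefore bounded, so the intersection of the Poincar\'e extensions is a bounded subset of $\H^{10}$ meeting the boundary $\R^9$ in only finitely many cusps; a bounded hyperbolic polytope with finitely many ideal vertices has finite volume, giving the claim.

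Once \lemref{circint} is applied the derivation is entirely routine, so I expect no genuine difficulty in the computation; the only points demanding care are (i) recognizing and repairing the typographical ``$\tfrac{\sqrt6-\sqrt6}{2}$'' in the $b$-column of rows $27$ and $28$ of \eqref{coords10}, without which sphere $27$ degenerates to a hyperplane and the key quadratic bound disappears, and (ii) verifying that the chain of linear inequalities actually closes on $x_2$ rather than stalling --- this hinges on the fact that it is sphere $21$ ($3.4$), not sphere $22$ ($3$), that supplies the crucial comparison $x_2<x_4$, since sphere $22$ only relates $x_2$ to the unconstrained $x_3$.
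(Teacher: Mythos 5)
Your proof is correct, but it takes a genuinely different route from the paper's. The paper's own argument is much shorter: it uses only hyperplane $15$ (giving $x_9<0$) together with sphere $25$ (the row defined as $3.14$, with coordinates $2(1+\sqrt3),\,2(\sqrt3-1),\,1,\dots,1$), whose center has every coordinate equal to its radius $r$; then $x_9<0$ forces $(r-x_9)^2>r^2$ inside the interior condition, an immediate contradiction, and the very same inequality pins every coordinate to $0<x_i<2r$ for the finite-volume half. You instead transplant the template of \secref{pack36}--\secref{pack38}: chaining the hyperplanes $16$ through $21$ to drive $x_2<x_4<x_5<\dots<x_9<0$, and then contradicting with sphere $27$ (the $2.11$ row). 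Both arguments are sound. What yours buys: uniformity with the earlier sections, and a correct diagnosis and repair of a genuine typo in \eqref{coords10} --- rows $27$ and $28$ as printed have $b=\frac{\sqrt6-\sqrt6}{2}=0$, which is not even a valid inversive coordinate since then $\hat b b-\abs{bz}^2=-2\neq-1$, whereas your $\frac{\sqrt6-\sqrt2}{2}$ restores $\hat b b-\abs{bz}^2=-1$ in agreement with \eqref{coords6}, \eqref{coords7}, \eqref{coords8}. What the paper's buys: brevity (two inequalities instead of eight) and independence from the defective rows, since by working with row $25$ it never needs the typo repair at all. (For completeness, the paper's displayed inequality has its own cosmetic slips --- the radius of sphere $25$ is $\frac{\sqrt3+1}{4}$, not $\frac{\sqrt6+\sqrt2}{4}$, and the sum should run over $i=1,\dots,9$ rather than $10$ --- but its underlying logic, that the center coordinate equals the radius, is unaffected.)
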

\begin{proof}
We first show that the configuration has empty interior: 
\begin{align}
-x_9&>0&&\tag{3.10.15}\\
\implies x_9&<0\nonumber\\
\label{ineq31016}\left(\frac{\sqrt{6}+\sqrt{2}}{4}\right)^2-\sum\limits_{i=1}^{10}\left(\frac{\sqrt{6}+\sqrt{2}}{4}-x_i\right)^2&>0&&\tag{3.10.25}\\
\left(\frac{\sqrt{6}+\sqrt{2}}{4}\right)^2>\sum\limits_{i=1}^{10}\left(\frac{\sqrt{6}+\sqrt{2}}{4}-x_i\right)^2&\ge\left(\frac{\sqrt{6}+\sqrt{2}}{4}-x_9\right)^2,\nonumber\\
&>\left(\frac{\sqrt{6}+\sqrt{2}}{4}\right)^2,\nonumber\\
\end{align}

\noindent a contradiction. Hence no point $(x_i)_{i=1}^9\in\R^9$ lies in the mutual interior of the specified configuration. 

\eqref{ineq31016} gives bounds for each coordinate:
\begin{align*}
&\left(\frac{\sqrt{6}+\sqrt{2}}{4}\right)^2-\sum\limits_{i=1}^{10}\left(\frac{\sqrt{6}+\sqrt{2}}{4}-x_i\right)^2>0\\
\implies&\left(\frac{\sqrt{6}+\sqrt{2}}{4}\right)^2>\sum\limits_{i=1}^{10}\left(\frac{\sqrt{6}+\sqrt{2}}{4}-x_i\right)^2\ge\left(\frac{\sqrt{6}+\sqrt{2}}{4}-x_i\right)^2\\
\implies&0<x_i<\frac{\sqrt{6}+\sqrt{2}}{2}
\end{align*}

\noindent for $1\le i\le9$. Since the intersection of the respective Poincar\'{e} extensions of the circles is bounded and does not meet the boundary of $\H^{10}$, it must be of finite volume.
\end{proof}

\begin{thm}
\eqref{coords10} generates a sphere packing in $\R^9$ through the cluster $\{22\}$. 
\end{thm}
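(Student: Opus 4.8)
The plan is to follow the template established for the lower-dimensional cases in \secref{pack36}--\secref{pack38}: with the preceding finite-volume lemma in hand, the theorem reduces to a single application of the Structure Theorem (\thmref{structthm}) to the decomposition $\cC=\{v_{22}\}$, $\hat\cC=\{v_i:15\le i\le 29,\ i\neq 22\}$ of \eqref{coords10}. Thus the task is to discharge the two hypotheses of \thmref{structthm} for this cluster--cocluster pair and then read the packing off of the resulting Coxeter diagram.

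First I would note that the finite-covolume hypothesis is already available: the preceding lemma shows that \eqref{coords10} has empty interior in $\R^9$ and Poincar\'e-extends to a finite-volume polytope in $\H^{10}$, so $\Gamma=\langle\hat\cC\rangle$ acts on $\H^{10}$ with finite covolume. It then remains to check the two geometric conditions. Since $\cC$ is a single sphere, the requirement that any two spheres of $\cC$ be disjoint or tangent holds vacuously, so the whole matter reduces to verifying that $v_{22}$ is disjoint, tangent, or orthogonal to every sphere of $\hat\cC$. By the cluster criterion---that the relevant Gram-matrix row have entries $G_i$ exclusively satisfying $|G_i|\ge 1$ or $G_i=0$---this is a direct inspection of row $8$, the $v_{22}$ row; via \defref{Cox} it is equivalent to confirming that node $22$ of the Coxeter diagram meets every other node only through thick, dashed, or absent lines.

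I would then display the Coxeter diagram of \eqref{coords10} with node $22$ highlighted as the cluster and conclude, by \thmref{structthm}, that $\Gamma\cdot\cC$ is a crystallographic packing in $\R^9$. The genuine difficulty of this result lies upstream---in producing the configuration \eqref{coords10} by unfolding and in establishing the finite-volume lemma---so within the theorem itself the main obstacle is the verification step: one must translate each entry of the $v_{22}$ row into the correct edge of the Coxeter diagram, taking care with the orientation-dependent signs (so that, for instance, the entry recorded between $v_{22}$ and its neighbor $v_{21}$ is classified consistently with the lower-dimensional templates). Once $\{22\}$ is exhibited as a cluster, the conclusion is immediate.
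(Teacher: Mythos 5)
Your strategy is the same as the paper's---the paper's entire proof consists of invoking \thmref{structthm} on the displayed Coxeter diagram, with the preceding lemma supplying the finite-covolume hypothesis, exactly as you outline---but the one step you defer to ``direct inspection'' is precisely the step that fails, and the remedy you propose cannot repair it. Row $8$ of the printed Gram matrix, the $v_{22}$ row, is $\left(0,0,0,0,0,0,-\tfrac{1}{2},-1,0,0,0,0,1,0,0\right)$: the entry against $v_{21}$ is $-\tfrac{1}{2}$, which is neither $0$ nor of absolute value $\ge 1$, so the cluster criterion you cite is violated. This is not a sign or orientation issue, nor a typo in the Gram matrix: reading \eqref{coords10} directly, $v_{21}$ and $v_{22}$ both have $b=\hat b=0$, hence are hyperplanes through the origin whose unit normals (the $bx$ entries) are $\tfrac{1}{\sqrt2}(0,-1,0,1,0,\dots,0)$ and $\tfrac{1}{\sqrt2}(0,-1,1,0,\dots,0)$, giving $\ip{v_{21}}{v_{22}}_Q=-\tfrac{1}{2}$. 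Reversing the orientation of either sphere ($v\mapsto -v$) only flips the sign of this entry, never its magnitude, so no bookkeeping of ``orientation-dependent signs'' can reclassify it as tangent, disjoint, or orthogonal: these two walls genuinely meet at dihedral angle $\pi/3$. Indeed this is forced by the labels themselves---$v_{22}$ is wall $3$ and $v_{21}$ is wall $3.4$, and the reflection in wall $3$ fixes wall $3$ and preserves angles, so walls $3$ and $3.4$ must meet at exactly the angle at which walls $3$ and $4$ meet.

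Consequently, with $v_{21}\in\hat\cC$, the decomposition $\cC=\{v_{22}\}$, $\hat\cC=\{v_i: 15\le i\le 29,\ i\ne 22\}$ does not satisfy the second hypothesis of \thmref{structthm}, and your argument---like the paper's---does not go through for the configuration as printed. What your verification step actually exposes is an internal inconsistency in the paper: the drawn Coxeter diagram omits the $21$--$22$ edge demanded by the paper's own Gram matrix and coordinates, and the one-line proof silently relies on the diagram rather than the data. (The same defect occurs in the ``lower-dimensional templates'' of \secref{pack36}--\secref{pack38}, so appealing to consistency with them only repeats the error.) By contrast, the configuration of \secref{pack311} avoids the problem by using the conjugated walls $2.3.2.j$---e.g.\ wall $2.3.2.4$, with normal $\tfrac{1}{\sqrt2}(-1,0,0,1,0,\dots,0)$---which are genuinely orthogonal to wall $3$. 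A correct proof would first replace the walls $3.j$ in \eqref{coords10} by such conjugates and re-derive the Gram matrix and the finite-volume lemma; only after that repair does the Structure-Theorem argument you describe apply.
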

\begin{proof}
Application of \thmref{structthm} to the following Coxeter diagram proves the result. 

\begin{center}
\begin{tikzpicture}[scale=1.5]
    \coordinate (14) at (1,1.5);
    \coordinate (5) at (0,0);
    \coordinate (6) at (1,0);
    \coordinate (7) at (2,0);
    \coordinate (13) at (4,0);
    \coordinate (8) at (5,0);
    \coordinate (10) at (2,1);
    \coordinate (12) at (3,1.5);
    \coordinate (9) at (4.5,1.5);
    \coordinate (15) at (2,2);
    \coordinate (4) at (0,3);
    \coordinate (3) at (1,3);
    \coordinate (2) at (2,3);
    \coordinate (1) at (3,3);
    \coordinate (11) at (4,3);
    
    \draw[thick, double distance=2pt] (1) -- (2);
    \draw[ultra thick] (1) -- (11);
    \draw[thick] (2) -- (3);
    \draw[thick, double distance=2.5pt] (2) -- (15);
    \draw[thick, double distance=0.3pt] (2) -- (15);
    \draw[thick] (3) -- (4);
    \draw[thick] (4) -- (5);
    \draw[thick] (5) -- (6);
    \draw[thick] (6) -- (7);
    \draw[thick] (6) -- (14);
    \draw[thick, double distance=2.5pt] (7) -- (10);
    \draw[thick, double distance=0.3pt] (7)-- (10);
    \draw[ultra thick] (7) -- (13);
    \draw[ultra thick] (8) -- (13);
    \draw[thick, double distance=2] (9) -- (11);
    \draw[thick, double distance=2.5pt] (9) -- (12);
    \draw[thick, double distance=0.3pt] (9)-- (12);    
    \draw[thick] (9) -- (13);
    \draw[dashed] (10) -- (11);
    \draw[ultra thick] (10) -- (12);
    \draw[dashed] (10) -- (15);
    \draw[dashed] (11) -- (13);
    \draw[ultra thick] (12) -- (15);
    \draw[dashed] (13) -- (15);
    
{    \filldraw[fill=white] (1) circle (2pt) node[above] {\scriptsize 15};}
{    \filldraw[fill=white] (2) circle (2pt) node[above] {\scriptsize 16};}
{    \filldraw[fill=white] (3) circle (2pt) node[above] {\scriptsize 17};}
{    \filldraw[fill=white] (4) circle (2pt) node[above] {\scriptsize 18};}
{    \filldraw[fill=white] (5) circle (2pt) node[below] {\scriptsize 19};}
{    \filldraw[fill=white] (6) circle (2pt) node[below] {\scriptsize 20};}
{    \filldraw[fill=white] (7) circle (2pt) node[below] {\scriptsize 21};}
{    \filldraw[fill=red] (8) circle (2pt) node[below] {\scriptsize 22};}
{    \filldraw[fill=white] (9) circle (2pt) node[right] {\scriptsize 23};}
{    \filldraw[fill=white] (10) circle (2pt) node[left] {\scriptsize 24};}
{    \filldraw[fill=white] (11) circle (2pt) node[above] {\scriptsize 25};}
{    \filldraw[fill=white] (12) circle (2pt) node[above] {\scriptsize 26};}
{    \filldraw[fill=white] (13) circle (2pt) node[below] {\scriptsize 27};}
{    \filldraw[fill=white] (14) circle (2pt) node[above] {\scriptsize 28};}
{    \filldraw[fill=white] (15) circle (2pt) node[left] {\scriptsize 29};}
\end{tikzpicture}
\end{center}
\end{proof}

\subsection{$d=3,n=11$}\label{pack311}

\begin{claim}\label{claim:packing311}
The following inversive coordinates generate a subgroup of the group of isometries obtained by Vinberg's algorithm in \cite{Mcl10}.

\setcounter{MaxMatrixCols}{20}
\begin{align}\label{coords11}
\begin{array}{c|cccccccccccc|l}
& \hat{b} & b & bx_1 & bx_2 & bx_3 & bx_4 & bx_5 & bx_6 & bx_7 & bx_8 & bx_9 & bx_{10} & \text{def'd as}:\\\hline
16& 0& 0& 0& 0& 0& 0& 0& 0& 0& 0& 0& -1 & 2.3.2.11\\
17& 0& 0& 0& 0& 0& 0& 0& 0& 0& 0& -\frac{\sqrt{2}}{2} & \frac{\sqrt{2}}{2} & 2.3.2.10\\
18& 0& 0& 0& 0& 0& 0& 0& 0& 0& -\frac{\sqrt{2}}{2} & \frac{\sqrt{2}}{2} & 0 & 2.3.2.9\\
19& 0& 0& 0& 0& 0& 0& 0& 0& -\frac{\sqrt{2}}{2} & \frac{\sqrt{2}}{2} & 0& 0 & 2.3.2.8\\
20& 0& 0& 0& 0& 0& 0& 0& -\frac{\sqrt{2}}{2} & \frac{\sqrt{2}}{2} & 0& 0& 0 & 2.3.2.7\\
21& 0& 0& 0& 0& 0& 0& -\frac{\sqrt{2}}{2} & \frac{\sqrt{2}}{2} & 0& 0& 0& 0 & 2.3.2.6\\
22& 0& 0& 0& 0& 0& -\frac{\sqrt{2}}{2} & \frac{\sqrt{2}}{2} & 0& 0& 0& 0& 0 & 2.3.2.5\\
23& 0& 0& 0& -\frac{\sqrt{2}}{2} & \frac{\sqrt{2}}{2} & 0& 0& 0& 0& 0& 0& 0 & 3\\
24& 0& 0& -\frac{\sqrt{2}}{2} & 0& 0& \frac{\sqrt{2}}{2} & 0& 0& 0& 0& 0& 0 & 2.3.2.4\\
25& -\frac{\sqrt{2}}{2} & \frac{\sqrt{2}}{2} & \frac{\sqrt{2}}{2} & 0& 0& 0& 0& 0& 0& 0& 0& 0 & 1\\
26& \sqrt{2} & \sqrt{2} & 0& \frac{\sqrt{6}}{2} & \frac{\sqrt{6}}{2} & 0& 0& 0& 0& 0& 0& 0 & \footnotemark\\
27& \frac{\sqrt{2}+\sqrt{6}}{2} & \frac{\sqrt{2}-\sqrt{6}}{2} & 0& 0& 0& 0& 0& 0& 0& 0& 0& 0 & 12\\
28& \frac{\sqrt{2}+\sqrt{6}}{2} & \frac{\sqrt{6}-\sqrt{2}}{2} & 0& \sqrt{2} & 0& 0& 0& 0& 0& 0& 0& 0 & 2.12\\
29& \frac{\sqrt{2}+\sqrt{6}}{2} & \frac{\sqrt{6}-\sqrt{2}}{2} & \frac{\sqrt{2}}{2} & \frac{\sqrt{2}}{2} & \frac{\sqrt{2}}{2} & \frac{\sqrt{2}}{2} & 0& 0& 0& 0& 0& 0 & 2.3.2.13\\
30& \frac{5\sqrt{2}+\sqrt{6}}{2} & \frac{5\sqrt{2}-\sqrt{6}}{2} & \frac{\sqrt{6}}{2} & \frac{\sqrt{6}}{2} & \frac{\sqrt{6}}{2} & \frac{\sqrt{6}}{2} & \frac{\sqrt{6}}{2} & \frac{\sqrt{6}}{2} & \frac{\sqrt{6}}{2} & \frac{\sqrt{6}}{2} & 0& 0 & 2.3.2.14\\
31& \sqrt{6}+\sqrt{2} & \sqrt{6}-\sqrt{2} & \frac{\sqrt{2}}{2} & \frac{\sqrt{2}}{2} & \frac{\sqrt{2}}{2} & \frac{\sqrt{2}}{2} & \frac{\sqrt{2}}{2} & \frac{\sqrt{2}}{2} & \frac{\sqrt{2}}{2} & \frac{\sqrt{2}}{2} & \frac{\sqrt{2}}{2} & \frac{\sqrt{2}}{2} & 2.3.2.15
\end{array}
\end{align}
\footnotetext{$(2.1.2.12).3.(2.1).\nye{12}$}

\end{claim}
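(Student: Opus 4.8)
The plan is to read off, from the rightmost column of \eqref{coords11}, that each of the vectors $16,\dots,31$ is obtained by acting on one of the original Vinberg roots $1,\dots,15$ by a finite sequence of reflections in those same roots, and then to promote this to the level of reflection matrices using \lemref{refrelation}. Concretely, unwinding the right-associative action $b.a=v_aR_{v_b}$ turns a word $a_1.a_2.\cdots.a_\ell$ into $v_{a_\ell}R_{v_{a_{\ell-1}}}\cdots R_{v_{a_1}}$, so that every listed vector has the shape $v_{b}\,g$ for some original root $v_b$ and some product $g$ of the original reflection matrices; for example vector $16=2.3.2.11$ is $v_{11}R_{v_2}R_{v_3}R_{v_2}$. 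Because reversing a vector's orientation does not change its reflection matrix (in \defref{reflmatrix}, $R_{-\hat v}=R_{\hat v}$), the reversed entries $\nye{12}$ appearing in vectors such as $26$ cause no trouble.

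First I would isolate the single algebraic fact driving everything: if $v'=v\,g$ with $g$ a $Q$-isometry, then $R_{v'}=g^{-1}R_{v}g$. This is immediate from $R_{v'}=I+2Qg^Tv^Tvg$ together with the isometry relation $gQg^T=Q$, and it specializes to \lemref{refrelation} when $g$ is a single reflection. Feeding in $g=R_{v_{a_{\ell-1}}}\cdots R_{v_{a_1}}$ shows $R_{v_k}=g^{-1}R_{v_b}g\in\langle R_{v_1},\dots,R_{v_{15}}\rangle$ for each $16\le k\le31$. I would run this as an induction on word-length so that the nested words, most importantly the composite $26=(2.1.2.12).3.(2.1).\nye{12}$, are handled: each inner parenthesized block is a shorter word whose reflection is already known to lie in the group, and the conjugation identity then absorbs it. Concluding, $\langle R_{v_{16}},\dots,R_{v_{31}}\rangle\le\langle R_{v_1},\dots,R_{v_{15}}\rangle$, which is exactly the asserted subgroup of the Vinberg isometry group.

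I would then situate these particular words as the output of the construction described just after \lemref{unfold}: \lemref{unfold} is applied to the $\hat{Bi}(3)$-type linear subdiagram to unfold one new root, and a further doubling-type transformation is applied to the remaining $m-4$ mirrors; since both moves are realized by elements of the Vinberg group, the vectors they produce stay in its orbit, which is precisely what the previous paragraph exploits. \thmref{validatedoubling} and \lemref{circint} then keep the resulting reflection group commensurate and of finite covolume, so that \thmref{structthm} can subsequently be applied to extract a packing, as in the preceding subsections.

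The genuine obstacle is not the group-theoretic containment, which is formal, but the verification that the explicit inversive coordinates tabulated in \eqref{coords11} are in fact equal to the words in the ``def'd as'' column. This amounts to a finite but substantial sequence of multiplications of the matrices $R_{v_i}=I+2Qv_i^Tv_i$, delicate for the long words attached to vectors $26$ and $30$; I would discharge it by generating the orbit of $1,\dots,15$ explicitly and matching coordinates, and then cross-checking against the Gram matrix printed immediately below the claim, whose entries $\langle v_i,v_j\rangle_Q$ must reproduce the tangency, orthogonality, and dihedral-angle data that the subsequent Coxeter-diagram argument relies on.
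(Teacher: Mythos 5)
Your proposal is correct and takes essentially the same approach the paper intends: the paper states this claim without a formal proof, its justification being exactly the ``def'd as'' column of \eqref{coords11} together with the machinery of \secref{hd} (the right-associative action $b.a=v_aR_{v_b}$ and \lemref{refrelation}, i.e.\ the identity $(a.b).c=a.b.a.c$), which is precisely what you formalize via the conjugation identity $R_{vg}=g^{-1}R_vg$ and induction on word length, including the nested word defining vector $26$ and the harmless orientation reversals $\nye{12}$. Your final observation --- that the substantive burden is the machine verification that the tabulated coordinates coincide with the stated words, cross-checked against the printed Gram matrix --- is exactly the computation the authors carried out and did not reproduce in the text.
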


This is \eqref{coords11}'s Gram matrix: 

\begin{align}
\left(
\begin{array}{cccccccccccccccc}
 -1 & \frac{\sqrt{2}}{2} & 0 & 0 & 0 & 0 & 0 & 0 & 0 & 0 & 0 & 0 & 0 & 0 & 0 & \frac{\sqrt{2}}{2}
   \\
 \frac{\sqrt{2}}{2} & -1 & \frac{1}{2} & 0 & 0 & 0 & 0 & 0 & 0 & 0 & 0 & 0 & 0 & 0 & 0 & 0 \\
 0 & \frac{1}{2} & -1 & \frac{1}{2} & 0 & 0 & 0 & 0 & 0 & 0 & 0 & 0 & 0 & 0 & \frac{\sqrt{3}}{2} &
   0 \\
 0 & 0 & \frac{1}{2} & -1 & \frac{1}{2} & 0 & 0 & 0 & 0 & 0 & 0 & 0 & 0 & 0 & 0 & 0 \\
 0 & 0 & 0 & \frac{1}{2} & -1 & \frac{1}{2} & 0 & 0 & 0 & 0 & 0 & 0 & 0 & 0 & 0 & 0 \\
 0 & 0 & 0 & 0 & \frac{1}{2} & -1 & \frac{1}{2} & 0 & 0 & 0 & 0 & 0 & 0 & 0 & 0 & 0 \\
 0 & 0 & 0 & 0 & 0 & \frac{1}{2} & -1 & 0 & \frac{1}{2} & 0 & 0 & 0 & 0 & \frac{1}{2} & 0 & 0 \\
 0 & 0 & 0 & 0 & 0 & 0 & 0 & -1 & 0 & 0 & 0 & 0 & 1 & 0 & 0 & 0 \\
 0 & 0 & 0 & 0 & 0 & 0 & \frac{1}{2} & 0 & -1 & \frac{1}{2} & 0 & 0 & 0 & 0 & 0 & 0 \\
 0 & 0 & 0 & 0 & 0 & 0 & 0 & 0 & \frac{1}{2} & -1 & 0 & \frac{\sqrt{3}}{2} & \frac{1}{2} & 0 & 0 &
   \frac{1}{2} \\
 0 & 0 & 0 & 0 & 0 & 0 & 0 & 0 & 0 & 0 & -1 & 1 & 0 & 0 & 2 & \sqrt{3} \\
 0 & 0 & 0 & 0 & 0 & 0 & 0 & 0 & 0 & \frac{\sqrt{3}}{2} & 1 & -1 & 0 & 0 & 1 & 0 \\
 0 & 0 & 0 & 0 & 0 & 0 & 0 & 1 & 0 & \frac{1}{2} & 0 & 0 & -1 & 0 & \sqrt{3} & 1 \\
 0 & 0 & 0 & 0 & 0 & 0 & \frac{1}{2} & 0 & 0 & 0 & 0 & 0 & 0 & -1 & 0 & 0 \\
 0 & 0 & \frac{\sqrt{3}}{2} & 0 & 0 & 0 & 0 & 0 & 0 & 0 & 2 & 1 & \sqrt{3} & 0 & -1 & 0 \\
 \frac{\sqrt{2}}{2} & 0 & 0 & 0 & 0 & 0 & 0 & 0 & 0 & \frac{1}{2} & \sqrt{3} & 0 & 1 & 0 & 0 & -1
   \\
\end{array}
\right).
\end{align}

\begin{lemma}
\eqref{coords11} has empty interior in $\R^{10}$, and extends by Poincar\'{e} extension to a hyperbolic polytope of finite volume. 
\end{lemma}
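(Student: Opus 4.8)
The plan is to reproduce, now in $\R^{10}$, the two-part argument carried out verbatim for the analogous lemmas in \secref{pack36}, \secref{pack37}, \secref{pack38}, and \secref{pack310}: first that the $16$ mirrors of \eqref{coords11} have empty common interior in $\R^{10}$, and second that their Poincar\'e extensions bound a region of $\H^{11}$ of finite volume. Every computation here is a direct instance of \lemref{circint} applied to the rows of \eqref{coords11}, read against the accompanying Gram matrix.

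For the empty-interior half I would argue by contradiction: suppose some $x=(x_1,\dots,x_{10})\in\R^{10}$ lies in the mutual interior of all the mirrors. The degenerate rows (those with $b=0$, namely rows $16$ through $24$) are hyperplanes, and \lemref{circint} turns each into a linear inequality. Row $16$, with spatial part $(0,\dots,0,-1)$, forces $x_{10}<0$; the rows carrying the $\bigl(-\tfrac{\sqrt2}{2},\tfrac{\sqrt2}{2}\bigr)$ pattern in adjacent slots (rows $17$ through $22$) then chain into $x_4<x_5<\dots<x_{10}<0$, and rows $23$ and $24$ constrain $x_1,x_2,x_3$ against this chain in the same fashion. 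Feeding these sign and ordering constraints into the inequality coming from one of the bounding spherical rows (a row with $b\neq0$, e.g.\ the concentrated row $29$ defined as $2.3.2.13$, or the all-equal-coordinate row $31$ defined as $2.3.2.15$) collapses, exactly as in \eqref{ineq31016}, to a chain $R^2>\sum_i(c_i-x_i)^2\ge(c_k-x_k)^2>R^2$ for the appropriate radius $R$ and center $c$, which is absurd. Hence no such $x$ exists.

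For finite volume, the very inequality used to close the contradiction also confines each coordinate $x_i$ to a bounded interval, so the intersection of the Poincar\'e extensions of the mirrors is a bounded subset of $\H^{11}$ that stays away from the ideal boundary $\R^{10}$; a bounded hyperbolic polytope meeting $\partial\H^{11}$ in at most finitely many cusps has finite volume, which is the desired conclusion. The main obstacle is one of bookkeeping rather than of ideas: because the configuration is markedly larger than the $n\le8$ cases, one must select exactly the right sequence of degenerate rows to generate the coordinate ordering and exactly the right spherical row so that the terminal inequality closes, then verify each step against \lemref{circint} and the Gram matrix. Choosing a bounding row whose model center is smaller than its radius leaves the contradiction open, so pinning down a row whose geometry forces $(c_k-x_k)^2>R^2$ is the crux; once that row is fixed, both halves of the statement follow mechanically.
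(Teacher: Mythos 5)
Your overall template is the right one and matches the paper's (hyperplane rows give sign/ordering constraints via \lemref{circint}; one spherical row then yields both the contradiction and the coordinate bounds needed for finite volume). But the decisive step, which you yourself flag as the crux, has a genuine gap: neither of the two rows you name can close the \emph{single-term} chain $R^2>\sum_i(c_i-x_i)^2\ge(c_k-x_k)^2>R^2$ that you import from \eqref{ineq31016}. For row 31 of \eqref{coords11} the bend is $b=\sqrt{6}-\sqrt{2}$ and every spatial entry is $\tfrac{\sqrt{2}}{2}$, so the radius is $R=\tfrac{\sqrt{6}+\sqrt{2}}{4}$ and every center coordinate is $c=\tfrac{\sqrt{3}+1}{4}$; these satisfy $2c^2=R^2$, i.e.\ $c=R/\sqrt{2}<R$. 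A single known-negative coordinate therefore gives only $(c-x_k)^2>c^2=R^2/2$, which does not contradict the interior condition. The same factor-of-$\sqrt{2}$ defect occurs for your row 29 ($c=\tfrac{\sqrt{3}+1}{2}$ against $R=\tfrac{\sqrt{6}+\sqrt{2}}{2}$). The reason the $n=10$ argument \eqref{ineq31016} closed in one term is that its key sphere has center coordinate \emph{equal} to its radius; that numerical coincidence does not carry over here, and this is precisely where the $n=11$ proof differs structurally from the $n\le10$ ones. The paper's fix is a two-term bound: rows 16 and 17 force $x_9<x_{10}<0$, and then against row 31 one gets $\left(c-x_9\right)^2+\left(c-x_{10}\right)^2>2c^2=R^2$, which is exactly \eqref{ineq31116}.

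A secondary inaccuracy feeds into this: row 23 of \eqref{coords11} (the mirror ``3'') has spatial part $\bigl(0,-\tfrac{\sqrt{2}}{2},\tfrac{\sqrt{2}}{2},0,\dots\bigr)$, so it yields only $x_2<x_3$, with no tie to the negative chain; your claim that rows 23 and 24 constrain $x_1,x_2,x_3$ ``in the same fashion'' overstates what is available. The signs of $x_2$ and $x_3$ remain undetermined, which is exactly why the spheres centered along the $x_2$-axis (rows 26 and 28, whose center coordinates \emph{do} exceed their radii) cannot be used either. If you insist on a one-term contradiction, the configuration does contain a row that supports it --- row 30, whose center coordinates $\tfrac{10\sqrt{3}+6}{44}$ in positions $1$ through $8$ exceed its radius $\tfrac{5\sqrt{2}+\sqrt{6}}{22}$, and positions $1,4,5,\dots,8$ are all known negative --- but you would have to identify and verify that row explicitly; as written, with rows 29/31 and a one-term chain, the contradiction does not close.
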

\begin{proof}
We first show that the configuration has empty interior: 
\begin{align}
-x_{10}&>0&&\tag{3.11.16}\\
\implies x_{10}&<0\nonumber\\
-\frac{x_9}{\sqrt{2}}+\frac{x_{10}}{\sqrt{2}}&>0&&\tag{3.11.17}\\
\implies x_9<x_{10}&<0\nonumber\\
\label{ineq31116}\left(\frac{\sqrt{6}+\sqrt{2}}{4}\right)^2-\sum\limits_{i=1}^{10}\left(\frac{\sqrt{3}+1}{4}-x_i\right)^2&>0&&\tag{3.11.31}\\
\left(\frac{\sqrt{6}+\sqrt{2}}{4}\right)^2>\sum\limits_{i=1}^{10}\left(\frac{\sqrt{3}+1}{4}-x_i\right)^2&\ge\left(\frac{\sqrt{3}+1}{4}-x_9\right)^2+\left(\frac{\sqrt{3}+1}{4}-x_{10}\right)^2,\nonumber\\
&>2\left(\frac{\sqrt{3}+1}{4}\right)^2,\nonumber\\
&=\left(\frac{\sqrt{6}+\sqrt{2}}{4}\right)^2,\nonumber
\end{align}

\noindent a contradiction. Hence no point $(x_i)_{i=1}^{10}\in\R^{10}$ lies in the mutual interior of the specified configuration. 

\eqref{ineq31116} gives bounds for each coordinate:
\begin{align*}
&\left(\frac{\sqrt{6}+\sqrt{2}}{4}\right)^2-\sum\limits_{i=1}^{10}\left(\frac{\sqrt{3}+1}{4}-x_i\right)^2>0\\
\implies&\left(\frac{\sqrt{6}+\sqrt{2}}{4}\right)^2>\sum\limits_{i=1}^{10}\left(\frac{\sqrt{3}+1}{4}-x_i\right)^2\ge\left(\frac{\sqrt{3}+1}{4}-x_i\right)^2\\
\implies&\left(1-\sqrt{2}\right)\frac{\sqrt{3}+1}{4}<x_i<\left(1+\sqrt{2}\right)\frac{\sqrt{3}+1}{4}
\end{align*}

\noindent for $1\le i\le10$. Since the intersection of the respective Poincar\'{e} extensions of the circles is bounded and does not meet the boundary of $\H^{11}$, it must be of finite volume.
\end{proof}

\begin{thm}
\eqref{coords7} generates a sphere packing in $\R^{10}$ through either of the clusters $\{23\}$ or $\{26\}$. 
\end{thm}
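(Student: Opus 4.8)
The plan is to verify the two hypotheses of \thmref{structthm} for the configuration \eqref{coords11}, taking cluster $\cC=\{23\}$ (and, separately, $\cC=\{26\}$) and cocluster $\hat\cC=\widetilde\cC\setminus\cC$, and then to read off the conclusion. The finite-covolume hypothesis on $\Gamma=\langle\hat\cC\rangle$ is furnished by the preceding lemma, which shows that \eqref{coords11} bounds a polytope of finite volume in $\H^{11}$; once the cluster sphere is seen to meet every cocluster sphere only orthogonally, tangentially, or disjointly, this finite covolume descends to $\langle\hat\cC\rangle$. The cluster conditions then package the remaining geometric content, so the bulk of the work is a direct inspection of the Gram matrix.

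Next I would check the cluster conditions entry-by-entry from the Gram matrix of \eqref{coords11}. Since each proposed cluster is a singleton, the first condition (any two spheres of $\cC$ are disjoint or tangent) is vacuous. For the second condition I use the criterion recorded in \secref{bianchi packings}: a sphere is a legitimate cluster exactly when every off-diagonal entry $G_i$ of its row satisfies $\abs{G_i}\ge1$ or $G_i=0$, i.e.\ it is tangent, disjoint, or orthogonal to each other sphere. Sphere $23$ sits in the eighth row, all of whose entries vanish except a single $1$ (tangency to sphere $28$); sphere $26$ sits in the eleventh row, whose nonzero off-diagonal entries are $1$ (tangent to $27$) together with $2$ and $\sqrt{3}$ (disjoint from $30$ and $31$), each of absolute value at least $1$. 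Thus both $\{23\}$ and $\{26\}$ pass the test, and \thmref{structthm} yields the crystallographic packing $\Gamma\cdot\cC$ in $\R^{10}$. To present this transparently I would finally draw the Coxeter diagram encoding \eqref{coords11}, in which the chosen cluster node is joined to every other node only by thick, dashed, or absent edges, making it visually identifiable per the remark following \defref{Cox}.

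The main obstacle is not the cluster check, which is mechanical once the Gram matrix is in hand, but the justification that the finite covolume established for the full group $\langle\widetilde\cC\rangle$ descends to the cocluster group $\langle\hat\cC\rangle$. For $\cC=\{23\}$ this is clean: the cluster wall meets the others orthogonally (save for one tangency), so reflecting the fundamental polytope across that wall preserves the remaining walls and exhibits a finite-volume fundamental domain for $\langle\hat\cC\rangle$ as a bounded union of copies of the original. For $\cC=\{26\}$ the disjoint relations to spheres $30$ and $31$ mean the cluster wall is ultraparallel to two cocluster walls, so the same one-step reflection argument does not literally produce the fundamental domain; this case requires a short separate tiling argument to confirm that $\langle\hat\cC\rangle$ still acts with finite covolume. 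This descent is the one point I would take care to state explicitly rather than leave implicit before invoking \thmref{structthm}.
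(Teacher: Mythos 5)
Your first two paragraphs are, in substance, exactly the paper's proof: the paper simply exhibits the Coxeter diagram of \eqref{coords11} and invokes \thmref{structthm}, and your row-by-row reading of the Gram matrix (row 8 for sphere 23, all zeros off the diagonal except a single $1$ against sphere 28; row 11 for sphere 26, with entries $1$, $2$, $\sqrt{3}$ against spheres 27, 30, 31) is precisely the information that diagram encodes. Had you stopped there, citing the preceding lemma for the covolume hypothesis, you would have reproduced the intended argument.

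The genuine problem is the ``descent'' claim in your first and third paragraphs: the assertion that finite covolume passes from $\langle\widetilde\cC\rangle$ down to the cocluster group $\Gamma=\langle\hat\cC\rangle$. This is false, and no reflection or tiling argument can establish it, because $\Gamma$ is never a lattice when the cluster is nonempty --- that the packing group is thin is the whole point of crystallographic packings. Already in the Apollonian case (the packing for $\hat{Bi}(1)$) the cocluster group is the Apollonian group, which has infinite covolume; if \thmref{structthm} literally required $\langle\hat\cC\rangle$ to act with finite covolume, it would apply to no packing in this paper. In the present configuration the failure is visible directly: the chamber of $\langle\hat\cC\rangle$ is cut out by the fifteen cocluster walls alone, and near the cusp where walls 23 and 28 are tangent there is a nonempty open set of ideal points interior to sphere 23 but exterior to every cocluster sphere, so this chamber meets $\partial\H^{11}$ in a set of positive measure and hence has infinite volume. (Equivalently: a lattice has full limit set, which is incompatible with $\Gamma$ preserving the pairwise disjoint interiors of the packing spheres.) The repair is not to supply the descent but to delete it: the covolume hypothesis of \thmref{structthm}, as it is actually used throughout this paper and as stated by Kontorovich--Nakamura, concerns the supergroup $\widetilde\Gamma=\langle\cC\sqcup\hat\cC\rangle$ --- the paper's phrasing attaches it to the wrong group --- and that hypothesis is exactly what the lemma preceding this theorem establishes. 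With that reading, your Gram-matrix check finishes the proof, and the clusters $\{23\}$ and $\{26\}$ stand on identical footing, with no separate argument needed for $\{26\}$.
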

\begin{proof}
Application of \thmref{structthm} to the following Coxeter diagram proves the result. 

\begin{center}
\begin{tikzpicture}[scale=1.5]
    \coordinate (fifteen) at (2.5,2);
    \coordinate (four) at (1.5,4);
    \coordinate (three) at (2,3);
    \coordinate (eleven) at (1,3);
    \coordinate (fourteen) at (0.5,1);
    \coordinate (ten) at (1,1);
    \coordinate (five) at (0.5,4);
    \coordinate (six) at (0,3);
    \coordinate (seven) at (0,2);
    \coordinate (twelve) at (1,2);
    \coordinate (thirteen) at (2,0);
    \coordinate (eight) at (1,0);
    \coordinate (two) at (3,3);
    \coordinate (nine) at (0,0);
    \coordinate (one) at (3,2);
    \coordinate (sixteen) at (2,1);
    
    \draw[thick,double distance=2pt] (one) -- (two);
    \draw[thick,double distance=2pt] (one) -- (sixteen);
    \draw[thick] (two) -- (three);
    \draw[thick] (three) -- (four);
    \draw[thick,double distance=2.5pt] (three) -- (fifteen);
    \draw[thick,double distance=0.3pt] (three) -- (fifteen);
    \draw[thick] (four) -- (five);
    \draw[thick] (five) -- (six);
    \draw[thick] (six) -- (seven);
    \draw[thick] (seven) -- (nine);
    \draw[thick] (seven) -- (fourteen);
    \draw[ultra thick] (eight) -- (thirteen);
    \draw[thick] (nine) -- (ten);
    \draw[thick,double distance=2.5pt] (ten) -- (twelve);
    \draw[thick,double distance=0.3pt] (ten) -- (twelve);
    \draw[thick] (ten) -- (thirteen);
    \draw[thick] (ten) -- (sixteen);
    \draw[ultra thick] (eleven) -- (twelve);
    \draw[dashed] (eleven) -- (fifteen);
    \draw[dashed] (eleven) -- (sixteen);
    \draw[ultra thick] (twelve) -- (fifteen);
    \draw[dashed] (thirteen) -- (fifteen);
    \draw[ultra thick] (thirteen) -- (sixteen);
    
{    \filldraw[fill=white] (one) circle (2pt) node[below] {\scriptsize 16};}
{    \filldraw[fill=white] (two) circle (2pt) node[above] {\scriptsize 17};}
{    \filldraw[fill=white] (three) circle (2pt) node[above] {\scriptsize 18};}
{    \filldraw[fill=white] (four) circle (2pt) node[above] {\scriptsize 19};}
{    \filldraw[fill=white] (five) circle (2pt) node[above] {\scriptsize 20};}
{    \filldraw[fill=white] (six) circle (2pt) node[left] {\scriptsize 21};}
{    \filldraw[fill=white] (seven) circle (2pt) node[left] {\scriptsize 22};}
{    \filldraw[fill=red] (eight) circle (2pt) node[below] {\scriptsize 23};}
{    \filldraw[fill=white] (nine) circle (2pt) node[below] {\scriptsize 24};}
{    \filldraw[fill=white] (ten) circle (2pt) node[below] {\scriptsize 25};}
{    \filldraw[fill=red] (eleven) circle (2pt) node[above] {\scriptsize 26};}
{    \filldraw[fill=white] (twelve) circle (2pt) node[left] {\scriptsize 27};}
{    \filldraw[fill=white] (thirteen) circle (2pt) node[below] {\scriptsize 28};}
{    \filldraw[fill=white] (fourteen) circle (2pt) node[below] {\scriptsize 29};}
{    \filldraw[fill=white] (fifteen) circle (2pt) node[below] {\scriptsize 30};}
{    \filldraw[fill=white] (sixteen) circle (2pt) node[right] {\scriptsize 31};}
\end{tikzpicture}
\end{center}
\end{proof}

\subsection{$d=3,n=13$}\label{pack313}

\begin{claim}\label{claim:packing313}
The following inversive coordinates generate a subgroup of the group of isometries obtained by Vinberg's algorithm in \cite{Mcl10}.

\setcounter{MaxMatrixCols}{15}
\begin{align}
\label{coords13}\begin{array}{c|cccccccccccccc|l}
&\hat{b} & b & bx_1 & bx_2 & bx_3 & bx_4 & bx_5 & bx_6 & bx_7 & bx_8 & bx_9 & bx_{10} & bx_{11} & bx_{12} & \text{def'd as}:\\\hline
23& 0& 0& 0& 0& 0& 0& 0& 0& 0& 0& 0& 0& 0& -1 & 2.3.2.13\\
24& 0& 0& 0& 0& 0& 0& 0& 0& 0& 0& 0& 0& -\alpha & \alpha & 2.3.2.12\\
25& 0& 0& 0& 0& 0& 0& 0& 0& 0& 0& 0& -\alpha & \alpha & 0 & 2.3.2.11\\
26& 0& 0& 0& 0& 0& 0& 0& 0& 0& 0& -\alpha & \alpha & 0& 0 & 2.3.2.10\\
27& 0& 0& 0& 0& 0& 0& 0& 0& 0& -\alpha & \alpha & 0& 0& 0 & 2.3.2.9\\
28& 0& 0& 0& 0& 0& 0& 0& 0& -\alpha & \alpha & 0& 0& 0& 0 & 2.3.2.8\\
29& 0& 0& 0& 0& 0& 0& 0& -\alpha & \alpha & 0& 0& 0& 0& 0 & 2.3.2.7\\
30& 0& 0& 0& 0& 0& 0& -\alpha & \alpha & 0& 0& 0& 0& 0& 0 & 2.3.2.6\\
31& 0& 0& 0& 0& 0& -\alpha & \alpha & 0& 0& 0& 0& 0& 0& 0 & 2.3.2.5\\
32& 0& 0& -\alpha & 0& 0& \alpha & 0& 0& 0& 0& 0& 0& 0& 0 & 2.3.2.4\\
33& -\alpha & \alpha & \alpha & 0& 0& 0& 0& 0& 0& 0& 0& 0& 0& 0 & 1\\
34& \sqrt{2} & \sqrt{2} & 0& \gamma & \gamma & 0& 0& 0& 0& 0& 0& 0& 0& 0 & \beta\\
35& \hat{b}_{35} & b_{35} & 0& 0& 0& 0& 0& 0& 0& 0& 0& 0& 0& 0 & 14\\
36& \hat{b}_{35} & -b_{35} & 0& \sqrt{2} & 0& 0& 0& 0& 0& 0& 0& 0& 0& 0 & 2.14\\
37& \hat{b}_{35} & b_{35} & \alpha & \alpha & \alpha & \alpha & 0& 0& 0& 0& 0& 0& 0& 0 & 2.3.2.15\\
38& \hat{b}_{38} & b_{38} & \gamma & \sqrt{6} & \sqrt{6} & \gamma & \gamma & \gamma & \gamma & \gamma & \gamma & \gamma & \gamma & \gamma & 2.3.2.21\\
39& \hat{b}_{39} & b_{39} & \gamma & \gamma & \gamma & \gamma & \gamma & \gamma & \gamma & \gamma & 0& 0& 0& 0 & 2.3.2.16\\
40& \hat{b}_{40} & b_{40} & \sqrt{6} & \sqrt{6} & \sqrt{6} & \sqrt{6} & \sqrt{6} & \sqrt{6} & \gamma & \gamma & \gamma & \gamma & \gamma & \gamma & 2.3.2.22\\
41& \hat{b}_{41} & b_{41} & 1 & 1 & 1 & 1 & 1 & 1 & 1 & 1 & 1 & 1 & 1 & 1 & 2.3.2.20\\
42& \hat{b}_{42} & b_{42} & \frac{3\sqrt{2}}{2} & \frac{3\sqrt{2}}{2} & \frac{3\sqrt{2}}{2} & \frac{3\sqrt{2}}{2} & \frac{3\sqrt{2}}{2} & \frac{3\sqrt{2}}{2} & \frac{3\sqrt{2}}{2} & \alpha & \alpha & \alpha & \alpha & \alpha & 2.3.2.19\\
43& \hat{b}_{43} & b_{43} & \alpha & \alpha & \frac{3\sqrt{2}}{2} & \alpha & \alpha & \alpha & \alpha & \alpha & \alpha & \alpha & 0 & 0 & 2.3.2.17\\
44& \hat{b}_{44} & b_{44} & \alpha & \alpha & \frac{3\sqrt{2}}{2} & \alpha & \alpha & \alpha & \alpha & \alpha & \alpha & \alpha & \alpha & \alpha & 2.3.2.18
\end{array}
\end{align}
\end{claim}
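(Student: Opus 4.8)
The plan is to show that every inversive coordinate vector listed in \eqref{coords13} lies in the orbit of the generators produced by Vinberg's algorithm under the reflection group those generators define; the reflection about any orbit vector is then automatically an element of that group, which is exactly the subgroup assertion. Write $\Gamma_0$ for the group generated by the reflections about Vinberg's original generators $v_1,v_2,\dots$ in \cite{Mcl10}. The rightmost ``def'd as'' column exhibits each of rows $23$ through $44$ as the image of a single original generator under a word in the reflections $R_{v_j}$: recalling that $b.a$ denotes $v_aR_{v_b}$ and that the action is right-associative, a label $c_k.\cdots.c_1.a$ names the vector $v_aR_{v_{c_1}}\cdots R_{v_{c_k}}$, so every row is manifestly in the $\Gamma_0$-orbit of the $v_j$.

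Granting that, the containment follows from iterating \lemref{refrelation}. That lemma gives $R_{v_aR_{v_b}}=R_{v_b}R_{v_a}R_{v_b}$, and applying it inductively along a word shows
\begin{equation*}
R_{c_k.\cdots.c_1.a}=R_{v_{c_k}}\cdots R_{v_{c_1}}R_{v_a}R_{v_{c_1}}\cdots R_{v_{c_k}}\in\Gamma_0.
\end{equation*}
Hence the reflection about each row of \eqref{coords13} is a product of reflections about original generators, so the group these rows generate is a subgroup of $\Gamma_0$; this is the claim, and it is precisely the fact needed before applying \thmref{structthm} to the accompanying Coxeter diagram.

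What remains is the mechanical verification that the tabulated numeric entries really are the images named in the ``def'd as'' column. First I would resolve the abbreviations: by analogy with the parallel tables in \secref{pack310} and \secref{pack311} the symbols should be $\alpha=\frac{\sqrt{2}}{2}$ and $\gamma=\frac{\sqrt{6}}{2}$, with $\beta$ the long conjugation word of the same shape as those footnoted beneath \eqref{coords10} and \eqref{coords11}, and the bends $\hat b_{35},b_{35},\dots,\hat b_{44},b_{44}$ the specific algebraic numbers forced by their defining words. I would then evaluate each word by repeated application of the reflection matrix $R_{\hat v}=I+2Q\hat v^T\hat v$ of \defref{reflmatrix}. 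As a consistency check, every computed vector $v_i$ must satisfy $\ip{v_i}{v_i}_Q=-1$ (so that it is a genuine oriented sphere, per \cite{Kon17}), and all pairwise products $\ip{v_i}{v_j}_Q$ must reproduce the Gram matrix displayed after the claim.

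The genuine obstacle is purely the scale of this computation: the vectors are $14$-dimensional, the words in rows $38$ through $44$ are long, and the entries carry nested surds $\sqrt{2},\sqrt{3},\sqrt{6}$, so the verification is best carried out in a computer algebra system rather than by hand. The conceptual step---subgroup containment---is immediate from the reflection-conjugation identity and is identical in structure to the corresponding claims in \secref{pack36}, \secref{pack37}, \secref{pack38}, \secref{pack310}, and \secref{pack311}; only the bookkeeping grows.
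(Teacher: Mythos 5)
Your proposal is correct and takes essentially the same approach as the paper, whose justification for this claim is left implicit: the ``def'd as'' column of \eqref{coords13} exhibits each row as a word in the original Vinberg generators, and \lemref{refrelation} (restated in \secref{hd} as $(a.b).c=a.b.a.c$) gives exactly the conjugation identity $R_{c_k.\cdots.c_1.a}=R_{v_{c_k}}\cdots R_{v_{c_1}}R_{v_a}R_{v_{c_1}}\cdots R_{v_{c_k}}$ that you iterate, so the generated group sits inside the original one. The only difference is that you make the induction and the remaining numerical verification (checking $\ip{v_i}{v_i}_Q=-1$ and consistency with the displayed Gram matrix) explicit, which the paper leaves to the reader.
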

\footnotetext{$(2.1.2.14).3.(2.1).\nye{14}$}

for $\alpha=\frac{\sqrt{2}}{2}$, $\beta$ denoting $(2.1.2.14).3.(2.1).\nye{14}$, $\gamma=\frac{\sqrt{6}}{2}$, and the following values: 

\begin{align}
\begin{array}{c|cc}
k & \hat{b}_k & b_k\\\hline
35& \frac{\sqrt{2}+\sqrt{6}}{2} & \frac{\sqrt{2}-\sqrt{6}}{2}\\
38& 4\sqrt{2}+\sqrt{6} & 4\sqrt{2}-\sqrt{6}\\
39& \frac{5\sqrt{2}+\sqrt{6}}{2} & \frac{5\sqrt{2}-\sqrt{6}}{2}\\
40& 5\sqrt{2}+\sqrt{6} & 5\sqrt{2}-\sqrt{6}\\
41& 2\sqrt{3}+1 & 2\sqrt{3}-1\\
42& \frac{5\sqrt{6}+3\sqrt{2}}{2} & \frac{5\sqrt{6}-3\sqrt{2}}{2}\\
43& \sqrt{6}+\sqrt{2} & \sqrt{6}-\sqrt{2}\\
44& \frac{3\left(\sqrt{6}+\sqrt{2}\right)}{2} & \frac{3\left(\sqrt{6}-\sqrt{2}\right)}{2} 
\end{array}
\end{align}

This is \eqref{coords13}'s Gram matrix: 

\setcounter{MaxMatrixCols}{25}
\begin{align}
\left(
\begin{array}{cccccccccccccccccccccc}
 \sharp & \alpha & 0 & 0 & 0 & 0 & 0 & 0 & 0 & 0 & 0 & 0 & 0 & 0 & 0 & \gamma
   & 0 & \gamma & 1 & \alpha & 0 & \alpha \\
 \alpha & \sharp & \frac{1}{2} & 0 & 0 & 0 & 0 & 0 & 0 & 0 & 0 & 0 & 0 & 0 & 0 & 0 & 0 & 0
   & 0 & 0 & 0 & 0 \\
 0 & \frac{1}{2} & \sharp & \frac{1}{2} & 0 & 0 & 0 & 0 & 0 & 0 & 0 & 0 & 0 & 0 & 0 & 0 & 0 & 0 & 0 &
   0 & \frac{1}{2} & 0 \\
 0 & 0 & \frac{1}{2} & \sharp & \frac{1}{2} & 0 & 0 & 0 & 0 & 0 & 0 & 0 & 0 & 0 & 0 & 0 & 0 & 0 & 0 &
   0 & 0 & 0 \\
 0 & 0 & 0 & \frac{1}{2} & \sharp & \frac{1}{2} & 0 & 0 & 0 & 0 & 0 & 0 & 0 & 0 & 0 & 0 &
   \beta & 0 & 0 & 0 & 0 & 0 \\
 0 & 0 & 0 & 0 & \frac{1}{2} & \sharp & \frac{1}{2} & 0 & 0 & 0 & 0 & 0 & 0 & 0 & 0 & 0 & 0 & 0 & 0 &
   1 & 0 & 0 \\
 0 & 0 & 0 & 0 & 0 & \frac{1}{2} & \sharp & \frac{1}{2} & 0 & 0 & 0 & 0 & 0 & 0 & 0 & 0 & 0 &
   \beta & 0 & 0 & 0 & 0 \\
 0 & 0 & 0 & 0 & 0 & 0 & \frac{1}{2} & \sharp & \frac{1}{2} & 0 & 0 & 0 & 0 & 0 & 0 & 0 & 0 & 0 & 0 &
   0 & 0 & 0 \\
 0 & 0 & 0 & 0 & 0 & 0 & 0 & \frac{1}{2} & \sharp & \frac{1}{2} & 0 & 0 & 0 & 0 & \frac{1}{2} & 0 & 0
   & 0 & 0 & 0 & 0 & 0 \\
 0 & 0 & 0 & 0 & 0 & 0 & 0 & 0 & \frac{1}{2} & \sharp & \frac{1}{2} & 0 & 0 & 0 & 0 & 0 & 0 & 0 & 0 &
   0 & 0 & 0 \\
 0 & 0 & 0 & 0 & 0 & 0 & 0 & 0 & 0 & \frac{1}{2} & \sharp & 0 & \beta & \frac{1}{2} & 0 &
   \beta & 0 & 0 & 0 & 0 & \frac{1}{2} & 1 \\
 0 & 0 & 0 & 0 & 0 & 0 & 0 & 0 & 0 & 0 & 0 & \sharp & 1 & 0 & 0 & 2 & 2 & 4 & \sqrt{6} & 2 \sqrt{3} &
   \sqrt{3} & \sqrt{3} \\
 0 & 0 & 0 & 0 & 0 & 0 & 0 & 0 & 0 & 0 & \beta & 1 & \sharp & 0 & 0 & 1 & 1 & 2 &
   \gamma & \sqrt{3} & 0 & 0 \\
 0 & 0 & 0 & 0 & 0 & 0 & 0 & 0 & 0 & 0 & \frac{1}{2} & 0 & 0 & \sharp & 0 & \sqrt{3} & \sqrt{3} & 2
   \sqrt{3} & \frac{3\sqrt{2}}{2} & 3 & 1 & 2 \\
 0 & 0 & 0 & 0 & 0 & 0 & 0 & 0 & \frac{1}{2} & 0 & 0 & 0 & 0 & 0 & \sharp & 0 & 0 & 0 &
   \alpha & 0 & 0 & 0 \\
 \gamma & 0 & 0 & 0 & 0 & 0 & 0 & 0 & 0 & 0 & \beta & 2 & 1 & \sqrt{3} &
   0 & \sharp & 2 & 1 & 0 & \sqrt{3} & 0 & 0 \\
 0 & 0 & 0 & 0 & \beta & 0 & 0 & 0 & 0 & 0 & 0 & 2 & 1 & \sqrt{3} & 0 & 2 & \sharp & 1 &
   \gamma & 0 & 0 & \sqrt{3} \\
 \gamma & 0 & 0 & 0 & 0 & 0 & \beta & 0 & 0 & 0 & 0 & 4 & 2 & 2 \sqrt{3}
   & 0 & 1 & 1 & \sharp & 0 & 0 & 0 & \sqrt{3} \\
 1 & 0 & 0 & 0 & 0 & 0 & 0 & 0 & 0 & 0 & 0 & \sqrt{6} & \gamma & \frac{3\sqrt{2}}{2} &
   \alpha & 0 & \gamma & 0 & \sharp & \alpha & 0 &
   \alpha \\
 \alpha & 0 & 0 & 0 & 0 & 1 & 0 & 0 & 0 & 0 & 0 & 2 \sqrt{3} & \sqrt{3} & 3 & 0 &
   \sqrt{3} & 0 & 0 & \alpha & \sharp & 0 & 2 \\
 0 & 0 & \frac{1}{2} & 0 & 0 & 0 & 0 & 0 & 0 & 0 & \frac{1}{2} & \sqrt{3} & 0 & 1 & 0 & 0 & 0 & 0
   & 0 & 0 & \sharp & 0 \\
 \alpha & 0 & 0 & 0 & 0 & 0 & 0 & 0 & 0 & 0 & 1 & \sqrt{3} & 0 & 2 & 0 & 0 & \sqrt{3}
   & \sqrt{3} & \alpha & 2 & 0 & \sharp \\
\end{array}
\right)
\end{align}

for $\sharp=-1,\alpha=\frac{\sqrt{2}}{2},\beta=\frac{\sqrt{3}}{2},\gamma=\frac{\sqrt{6}}{2}$. (The use of variable names is purely due to formatting constraints due to the size of the Gram matrix.)

\begin{lemma}
\eqref{coords13} has empty interior in $\R^{12}$, and extends by Poincar\'{e} extension to a hyperbolic polytope of finite volume. 
\end{lemma}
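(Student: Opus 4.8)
The plan is to follow the two-part template already used for the configurations of \secref{pack36}--\secref{pack311}. First I would show that the mutual interior of the twenty-two spheres of \eqref{coords13} is empty in $\R^{12}$, and then that their Poincar\'e extensions cut out a region of $\H^{13}$ of bounded horizontal extent, so that the resulting polytope meets the boundary $\partial\H^{13}=\R^{12}$ in at most finitely many cusps and hence has finite volume.

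For the empty-interior half, I would first feed the ten hyperplanes of \eqref{coords13}---rows $23$ through $32$, each of which has $b=0$---into the $b=0$ case of \lemref{circint}. Row $23$ gives $-x_{12}>0$, and each of rows $24,\dots,31$ has the shape $(0,0,\dots,-\tfrac{\sqrt2}{2},\tfrac{\sqrt2}{2},\dots,0)$, so \lemref{circint} returns the successive inequalities $x_{11}<x_{12},\ x_{10}<x_{11},\ \dots,\ x_4<x_5$, while row $32$ gives $x_1<x_4$. Chaining these yields
\[
x_1<x_4<x_5<\cdots<x_{12}<0,
\]
so in particular $x_{12}<0$. Then I would apply the $b\neq0$ case of \lemref{circint} to the one fully balanced sphere of the configuration, row $41$, where $b_{41}=2\sqrt3-1>0$ and $bx_i=1$ for every $i$; its center $z_i=\tfrac{1}{2\sqrt3-1}$ is strictly positive in every coordinate and its radius equals that same value. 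The interior condition $\sum_{i=1}^{12}\bigl(\tfrac{1}{2\sqrt3-1}-x_i\bigr)^2<\bigl(\tfrac{1}{2\sqrt3-1}\bigr)^2$ is then impossible, because $x_{12}<0$ makes the single term indexed by $12$ already strictly larger than the right-hand side. Hence the mutual interior in $\R^{12}$ is empty.

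For the finiteness-of-volume half, the very same interior inequality for row $41$ forces $0<x_i<\tfrac{2}{2\sqrt3-1}$ for each $i$, so the horizontal extent of the configuration is bounded; since the mutual interior is empty and the radii of all walls are bounded, the intersection of the Poincar\'e extensions is a bounded subset of $\H^{13}$ that meets the boundary only in a lower-dimensional cusp set, and is therefore of finite volume, exactly as in the lower-dimensional cases.

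The main obstacle is organizational rather than conceptual: with twenty-two rows and a $22\times22$ Gram matrix, one must correctly read off each inversive-coordinate row of \eqref{coords13} and track the sign of every bend. The subtle point is that, unlike the cases $n\le8$, the hyperplane ladder here never constrains $x_2$ or $x_3$, so the earlier device of driving the contradiction through a negative $x_2$ is unavailable; the contradiction must instead be routed through a coordinate the ladder does force negative---here $x_{12}$---which is exactly why the balanced sphere of row $41$, rather than one of the partially balanced spheres of rows $38$--$40$ and $42$--$44$, is the correct instrument.
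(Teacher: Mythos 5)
Your proposal is correct and takes essentially the same route as the paper: the paper likewise derives $x_{12}<0$ from wall $23$ and then obtains both the contradiction and the per-coordinate bounds from the balanced sphere of row $41$, concluding finite volume from the bounded horizontal extent of the Poincar\'e extensions. The only differences are cosmetic---you derive the full chain of hyperplane inequalities when only $x_{12}<0$ is needed, and your center/radius value $\tfrac{1}{2\sqrt{3}-1}=\tfrac{2\sqrt{3}+1}{11}$ is in fact the correct one, the paper's $\tfrac{2\sqrt{3}+1}{7}$ being an arithmetic slip that does not affect the structure of the argument.
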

\begin{proof}
We first show that the configuration has empty interior: 
\begin{align}
-x_{12}&>0&&\tag{3.13.23}\\
\implies x_{12}&<0\nonumber\\
\label{ineq31320}\left(\frac{2\sqrt{3}+1}{7}\right)^2-\sum\limits_{i=1}^{12}\left(\frac{2\sqrt{3}+1}{7}-x_i\right)^2&>0&&\tag{3.11.42}\\
\left(\frac{2\sqrt{3}+1}{7}\right)^2>\sum\limits_{i=1}^{12}\left(\frac{2\sqrt{3}+1}{7}-x_i\right)^2&\ge\left(\frac{2\sqrt{3}+1}{7}-x_{12}\right)^2,\nonumber\\
&>\left(\frac{2\sqrt{3}+1}{7}\right)^2,\nonumber
\end{align}

\noindent a contradiction. Hence no point $(x_i)_{i=1}^{12}\in\R^{12}$ lies in the mutual interior of the specified configuration. 

\eqref{ineq31320} gives bounds for each coordinate:
\begin{align*}
&\left(\frac{2\sqrt{3}+1}{7}\right)^2-\sum\limits_{i=1}^{12}\left(\frac{2\sqrt{3}+1}{7}-x_i\right)^2>0\\
\implies&\left(\frac{2\sqrt{3}+1}{7}\right)^2>\sum\limits_{i=1}^{12}\left(\frac{2\sqrt{3}+1}{7}-x_i\right)^2\ge\left(\frac{2\sqrt{3}+1}{7}-x_i\right)^2\\
\implies&0<x_i<\frac{4\sqrt{3}+2}{7}
\end{align*}

\noindent for $1\le i\le12$. Since the intersection of the respective Poincar\'{e} extensions of the circles is bounded and does not meet the boundary of $\H^{13}$, it must be of finite volume.
\end{proof}

\begin{thm}
\eqref{coords13} generates a sphere packing in $\R^{12}$ through the cluster $\{35\}$. 
\end{thm}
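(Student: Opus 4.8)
The plan is to realize the claim as an application of the Structure Theorem \thmref{structthm}, exactly as in \secref{pack36}--\secref{pack311}. The preceding lemma already discharges the covolume hypothesis: since \eqref{coords13} extends by Poincar\'e extension to a finite-volume polytope in $\H^{13}$, the group $\Gamma=\langle\hat\cC\rangle$ generated by the cocluster---every sphere of \eqref{coords13} lying outside the chosen cluster---acts with finite covolume. Because the cluster $\cC$ is a single sphere, the first hypothesis of \thmref{structthm} (that any two members of $\cC$ be disjoint or tangent) holds vacuously, so the entire proof collapses to the second hypothesis: the cluster sphere must be disjoint, tangent, or orthogonal to each of the remaining $21$ spheres.

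First I would convert the $22\times22$ Gram matrix into its Coxeter diagram by the dictionary of \defref{Gram} and \defref{Cox}, reading an entry $0$ as an orthogonal pair (no edge), an entry of modulus $1$ as a tangency (thick edge), an entry of modulus exceeding $1$ as a disjoint pair at hyperbolic distance $d$ with $\cosh d$ equal to that entry (dashed edge), and an entry $g$ with $0<|g|<1$ as a genuine dihedral angle. By the remark following \defref{Cox}, the cluster is then detected by scanning for a row whose off-diagonal entries all lie in $\{0\}\cup\{x:|x|\ge1\}$, i.e.\ a node joined to every other node only by thick, dashed, or absent edges. The natural candidate is the sphere def'd $(2.1.2.14).3.(2.1).\nye{14}$, which is tangent to the sphere def'd $14$: its Gram row is $0$ apart from that single tangency and several disjoint entries of modulus at least $1$. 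Once such a row is confirmed clean, \thmref{structthm} yields that $\Gamma\cdot\cC$ is a crystallographic packing in $\R^{12}$, and I would then exhibit the Coxeter diagram with the cluster node coloured, as in the lower-dimensional subsections.

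The main obstacle is precisely this cleanliness check on a $22\times22$ matrix, since a single surviving off-diagonal entry of modulus strictly between $0$ and $1$ in the cluster row would record a dihedral angle and void the second hypothesis of \thmref{structthm}. The entries deserving the most scrutiny are those coupling the candidate cluster to the sphere def'd $1$ and to the long chain of mutually-$\tfrac{\pi}{3}$ plane mirrors (the spheres with $b=0$), where inner products $\tfrac12$ and $\tfrac{\sqrt3}{2}$ recur throughout the matrix; in particular one should verify which of the two mutually-tangent spheres def'd $14$ and $(2.1.2.14).3.(2.1).\nye{14}$ actually has the clean row, since the inner product of the former with the sphere def'd $1$ equals $\tfrac{\sqrt3}{2}$, a $\tfrac{\pi}{6}$ dihedral angle that would disqualify it. Pinning down the correct cluster node and drawing the resulting diagram is where essentially all of the work lies.
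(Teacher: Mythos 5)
Your approach is exactly the paper's own: the paper's entire proof is the single sentence ``Application of \thmref{structthm} to the following Coxeter diagram proves the result,'' with the finite-covolume hypothesis supplied by the preceding lemma, just as you set it up. But the check you single out as the crux --- and specifically your suspicion about \emph{which} of the two mutually-tangent spheres has the clean row --- is not merely prudent; it exposes a genuine error in the paper. Computing directly from \eqref{coords13},
\begin{equation*}
\ip{v_{33}}{v_{35}}_Q=\half\left(-\tfrac{\sqrt{2}}{2}\cdot\tfrac{\sqrt{2}-\sqrt{6}}{2}+\tfrac{\sqrt{2}}{2}\cdot\tfrac{\sqrt{2}+\sqrt{6}}{2}\right)=\tfrac{\sqrt{3}}{2},
\end{equation*}
in agreement with the entry $\beta$ in row $13$ of the paper's printed Gram matrix: the sphere labeled $35$ (def'd as $14$) meets the sphere labeled $33$ (def'd as $1$) at dihedral angle $\tfrac{\pi}{6}$, so $\{35\}$ violates the second hypothesis of \thmref{structthm} and is not a cluster as printed. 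The paper's own Coxeter diagram is internally inconsistent on this point: it draws an edge between nodes $33$ and $35$ (a single line, itself conflicting with the Gram matrix's $\beta$, but disqualifying either way) while nevertheless coloring node $35$ red. By contrast, row $12$ --- the sphere labeled $34$, def'd as $(2.1.2.14).3.(2.1).\nye{14}$ --- is the unique clean row of the printed Gram matrix: every entry is $0$, a single $1$ (the tangency with $35$), or a disjointness $2,2,4,\sqrt{6},2\sqrt{3},\sqrt{3},\sqrt{3}$. This also matches the pattern of \secref{pack311}, where the cluster $\{26\}$ is the analogous long-word sphere and the sphere def'd as $12$ is (correctly) not claimed as a cluster there. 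So your proposal, carried through, proves the theorem with cluster $\{34\}$; the ``$\{35\}$'' in the statement and the red node in the paper's diagram are evidently a mislabeling (equivalently, the labels of rows $34$ and $35$ of \eqref{coords13} should be transposed). In short, your method is the paper's method, executed more carefully than the paper itself, and the verification you located as ``where essentially all of the work lies'' is precisely where the printed proof fails to establish the stated claim.
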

\begin{proof}
Application of \thmref{structthm} to the following Coxeter diagram proves the result. 
\begin{center}
\begin{tikzpicture}[scale=1.5]
    \coordinate (2) at (0,0);
    \coordinate (3) at (1,0);
    \coordinate (15) at (2,0);
    \coordinate (18) at (3.5,0);
    \coordinate (4) at (1,-1);
    \coordinate (5) at (2.5,-1);
    \coordinate (6) at (4.5,-1);
    \coordinate (1) at (0,1);
    \coordinate (22) at (1,1);
    \coordinate (12) at (3,1);
    \coordinate (13) at (3.75,1);
    \coordinate (14) at (5,1);
    \coordinate (21) at (0,3);
    \coordinate (23) at (1,2);
    \coordinate (11) at (3,2);
    \coordinate (17) at (5,2);
    \coordinate (20) at (1,3);
    \coordinate (16) at (2,3);
    \coordinate (9) at (3,3);
    \coordinate (19) at (5,3);
    \coordinate (8) at (3,4);
    \coordinate (7) at (4,4);
    
    \draw[thick, double distance=2pt] (1) -- (2);
    \draw[dashed] (1) -- (17);
    \draw[dashed] (1) -- (19);
    \draw[ultra thick] (1) -- (20);
    \draw[thick, double distance=2pt] (1) -- (21);
    \draw[thick, double distance=2pt] (1) -- (23);
    \draw[thick] (2) -- (3);
    \draw[thick] (3) -- (4);
    \draw[thick] (3) -- (22);
    \draw[thick] (4) -- (5);
    \draw[thick] (5) -- (6);
    \draw[thick] (5) -- (18);
    \draw[thick] (6) -- (7);
    \draw[ultra thick] (6) -- (23);
    \draw[thick] (7) -- (8);
    \draw[thick,double distance=2.5 pt] (7) -- (19);
    \draw[thick,double distance=0.3 pt] (7) -- (19);
    \draw[thick] (8) -- (9);
    \draw[thick] (9) -- (11);
    \draw[thick] (9) -- (16);
    \draw[thick] (11) -- (12);
    \draw[thick] (12) -- (14);
    \draw[thick] (12) -- (15);
    \draw[thick,double distance=2.5pt] (12) -- (17);
    \draw[thick,double distance=0.3pt] (12) -- (17);
    \draw[thick] (12) -- (22);
    \draw[ultra thick] (13) -- (14);
    \draw[dashed] (13) -- (18);
    \draw[dashed] (13) -- (17);
    \draw[dashed] (13) -- (19);
    \draw[dashed] (13) -- (20);
    \draw[ultra thick] (14) -- (17);
    \draw[ultra thick] (14) -- (18);
    \draw[dashed] (14) -- (19);
    \draw[dashed] (14) -- (20);
    \draw[dashed] (15) -- (17);
    \draw[dashed] (15) -- (18);
    \draw[dashed] (15) -- (19);
    \draw[dashed] (15) -- (20);
    \draw[ultra thick] (15) -- (22);
    \draw[dashed] (15) -- (23);
    \draw[thick, double distance=2pt] (16) -- (20);
    \draw[ultra thick] (17) -- (19);
    \draw[dashed] (17) -- (18);
    \draw[dashed] (17) -- (21);
    \draw[ultra thick] (18) -- (19);
    \draw[dashed] (18) -- (20);
    \draw[dashed] (18) -- (23);
    \draw[dashed] (19) -- (23);
    \draw[thick,double distance=2pt] (20) -- (23);
    \draw[thick,double distance=2pt] (20) -- (21);
    \draw[dashed] (21) -- (23);
    
{    \filldraw[fill=white] (1) circle (2pt) node[left] {\scriptsize 23};}
{    \filldraw[fill=white] (2) circle (2pt) node[below] {\scriptsize 24};}
{    \filldraw[fill=white] (3) circle (2pt) node[right] {\scriptsize 25};}
{    \filldraw[fill=white] (4) circle (2pt) node[below] {\scriptsize 26};}
{    \filldraw[fill=white] (5) circle (2pt) node[below] {\scriptsize 27};}
{    \filldraw[fill=white] (6) circle (2pt) node[below] {\scriptsize 28};}
{    \filldraw[fill=white] (7) circle (2pt) node[above] {\scriptsize 29};}
{    \filldraw[fill=white] (8) circle (2pt) node[above] {\scriptsize 30};}
{    \filldraw[fill=white] (9) circle (2pt) node[right] {\scriptsize 31};}
{    \filldraw[fill=white] (11) circle (2pt) node[right] {\scriptsize 32};}
{    \filldraw[fill=white] (12) circle (2pt) node[below] {\scriptsize 33};}
{    \filldraw[fill=white] (13) circle (2pt) node[below] {\scriptsize 34};}
{    \filldraw[fill=red] (14) circle (2pt) node[below] {\scriptsize 35};}
{    \filldraw[fill=white] (15) circle (2pt) node[below] {\scriptsize 36};}
{    \filldraw[fill=white] (16) circle (2pt) node[above] {\scriptsize 37};}
{    \filldraw[fill=white] (17) circle (2pt) node[right] {\scriptsize 38};}
{    \filldraw[fill=white] (18) circle (2pt) node[right] {\scriptsize 39};}
{    \filldraw[fill=white] (19) circle (2pt) node[right] {\scriptsize 40};}
{    \filldraw[fill=white] (20) circle (2pt) node[above] {\scriptsize 41};}
{    \filldraw[fill=white] (21) circle (2pt) node[left] {\scriptsize 42};}
{    \filldraw[fill=white] (22) circle (2pt) node[left] {\scriptsize 43};}
{    \filldraw[fill=white] (23) circle (2pt) node[right] {\scriptsize 44};}
    
\end{tikzpicture}
\end{center}
\end{proof}

\section{Converting into inversive coordinates}

Often, authors use alternate coordinate systems when working with $\H^{n+1}$ and Vinberg's algorithm. In this section, we specify the transformations used for a given quadratic form, in order to preserve the properties of each space; namely, if it is known that for some quadratic form $A$, all vectors $v\in V$ have $\<v,v\>_A$ with some property, then we wish to find $f_A$ such that $\<f_A(v),f_A(v)\>_Q$ satisfies an analogue to that property. 

\subsection{Conversion of \cite{Mcl13}'s $\hat{Bi}$ coordinates}
\label{mcl bi conversion}

This is relevant to \secref{bianchi packings}. The vectors produced by \cite{Mcl13} were obtained using the quadratic form 
\begin{equation}\label{Mcl quad}
    f=
    \begin{cases}
        -2x_1x_2+2x_3^2+2mx_4^2 & \text{if } m \equiv 1,2 \text{ (mod 4)} \\
        -2x_1x_2+2x_3^2+2x_3x_4+\frac{m+1}{2}x_4^2 & \text{if } m \equiv 3 \text{ (mod 4)}
    \end{cases}
\end{equation}
for each $\hat{Bi}(m)$. Our first step in obtaining extended Bianchi group packings was to convert these coordinates to coordinates that correspond to our quadratic form $Q=-1$, by which we mean that all vectors $v$ satisfy $\ip{v}{v}_Q=-1$ (see \defref{Q} and \cite{Kon17}). To recap, this quadratic form arose directly from \defref{inversion} of sphere inversion, which led to the equation
\begin{equation}\label{Q derivation}
    \hat{b}b-|bz|^2=-1.
\end{equation}
In order to generate circle packings from $\hat{Bi}(m)$, we converted  \cite{Mcl13}'s coordinates (after normalizing their lengths) to fit the 2-dimensional version of (\ref{Q derivation}), $\hat{b}b-(b\bar{x})^2-(b\bar{y})^2$, in the following manner:

\begin{equation}
    \begin{cases}
    (x_1,x_2,x_3,x_4 \sqrt{m}) \mapsto (\hat{b},b,b\bar{x},b\bar{y}) & \text{if } m \equiv 1,2 \text{ (mod 4)} \\
    (x_1,x_2,x_3 + \frac{x_4}{2},\frac{x_4 \sqrt{m}}{2}) \mapsto (\hat{b},b,b\bar{x},b\bar{y}) & \text{if } m \equiv 3 \text{ (mod 4)}
    \end{cases}
\end{equation}

\subsection{Conversion of \cite{Vin72,Mcl10}'s coordinates}\label{d qf}

This is relevant to \secref{hd}. In that context, \cite{Vin72,Mcl10} use quadratic forms $-dx_0^2+\sum\limits_{i=1}^nx_i^2$ and vectors $x=(x_i)_{i=0}^n\in\R^{n+1}$ for which $\<x,x\>\in\N$. We apply the following conversion: 
\begin{align}
\label{vinconv}f(x)=(\hat{x}_0\sqrt{d}+\hat{x}_1,\hat{x}_0\sqrt{d}-\hat{x}_1,\hat{x}_2,\dots,\hat{x}_n)
\end{align}

\noindent where $\hat{x}=x/\sqrt{\<x,x\>}$ with components $\hat{x}_0,\dots,\hat{x}_n$. 

\begin{lemma}
\eqref{vinconv} corresponds to valid inversive coordinates. 
\end{lemma}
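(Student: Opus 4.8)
The plan is to check directly that $f(x)$ satisfies the single equation characterizing valid inversive coordinates, namely $\ip{f(x)}{f(x)}_Q = -1$, the condition recorded just after \defref{invcoords} (see \cite{Kon17}) and written concretely as \eqref{Q derivation}. First I would read off from \eqref{Q} that for any $(n+1)$-tuple $w=(w_0,w_1,w_2,\ldots,w_n)$ the form splits as $\ip{w}{w}_Q = w_0 w_1 - \sum_{i=2}^{n} w_i^2$, since the upper-left $2\times2$ block of $Q$ is the off-diagonal form with entries $\frac{1}{2}$ and the remaining block is $-I$.

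Then I would substitute the output of \eqref{vinconv}, whose first two coordinates are $w_0 = \hat{x}_0\sqrt{d}+\hat{x}_1$ and $w_1 = \hat{x}_0\sqrt{d}-\hat{x}_1$, while $w_i = \hat{x}_i$ for $i\ge 2$. The crucial observation is that the hyperbolic $2\times2$ block turns the product of these two coordinates into a difference of squares, $w_0 w_1 = d\hat{x}_0^2 - \hat{x}_1^2$; this is exactly where the factor $\sqrt{d}$ in \eqref{vinconv} is engineered to recover the coefficient $d$ of the time variable. Collecting all terms gives $\ip{f(x)}{f(x)}_Q = d\hat{x}_0^2 - \sum_{i=1}^n \hat{x}_i^2$, which is precisely the negative of the Vinberg form $-d\hat{x}_0^2 + \sum_{i=1}^n \hat{x}_i^2$ evaluated at $\hat{x}$.

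Finally I would invoke the normalization $\hat{x} = x/\sqrt{\ip{x}{x}}$. Because the vectors produced by Vinberg's algorithm are outward-pointing wall normals, hence spacelike, $\ip{x}{x}$ is a positive integer and the square root is a genuine positive real, so $f$ is well defined; moreover $\ip{\hat{x}}{\hat{x}} = 1$ in the form $-dx_0^2 + \sum_{i=1}^n x_i^2$. Substituting this identity into the expression above yields $\ip{f(x)}{f(x)}_Q = -1$, which is exactly the defining relation of an inversive coordinate, completing the argument.

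The proof reduces to a single algebraic identity, so I do not anticipate any real obstacle. The only points demanding attention are clerical: tracking the index shift (the map sends an $(n+1)$-tuple to an $(n+1)$-tuple by distributing the rescaled time coordinate $\hat{x}_0\sqrt{d}$ across the two null directions of the $2\times2$ block) and verifying that the normalizing norm is strictly positive so that passing to $\hat{x}$ is legitimate.
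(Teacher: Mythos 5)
Your proposal is correct and is essentially identical to the paper's own proof: both expand $\ip{f(x)}{f(x)}_Q$ using the off-diagonal block of $Q$ to get the difference of squares $d\hat{x}_0^2-\hat{x}_1^2$, recognize the result as the negative of the Vinberg form at $\hat{x}$, and conclude $-1$ from the normalization $\hat{x}=x/\sqrt{\ip{x}{x}}$. Your added remark that $\ip{x}{x}$ is a positive integer (so the normalization is legitimate) is a minor point the paper leaves implicit, but it does not change the argument.
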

\begin{proof}
\begin{align*}
f(x)Qf(x)^T&=d\hat{x}_0^2-\hat{x}_1^2-\sum\limits_{i=2}^nx_i^2\\
&=d\left(\frac{x_0}{\sqrt{\<x,x\>}}\right)^2-\sum\limits_{i=1}^n\left(\frac{x_i}{\sqrt{\<x,x\>}}\right)^2\\
&=\frac{dx_0^2-\sum\limits_{i=1}^nx_i^2}{\<x,x\>}\\
&=-\frac{\<x,x\>}{\<x,x\>}\\
&=-1.
\end{align*}
\end{proof}

Therefore, for given $d$, \eqref{vinconv} gives the function used to convert to inversive coordinates, preserving the properties of the domain inner product space. 

\section{A note on implementing the Lobachevsky function}

As in e.g. \cite{Mil82,Vin93}, we have: 

\begin{Def}[Lobachevsky function]\label{lob}
The \emph{Lobachevsky function} is the integral
\begin{align}
\label{lob1} L(\theta)=\int\limits_0^\theta\log\abs{2\sin u}du.
\end{align}
\end{Def}

\cite{Mil82} discusses the importance of this function in computing exact hyperbolic volume, specifically in the case of ideal tetrahedra in $\H^3$, and \cite{Vin93} provides further examples of some general computations for other hyperbolic solids. Per \cite{Mil82}, the following are also true of small $\theta$: 

\begin{align}
\label{lob2}L(\theta)&=\theta\left(1-\log\abs{2\theta}+\sum\limits_{n\ge1}\frac{B_n(2\theta)^{2n}}{2n(n+1)!}\right)\\
\label{lob3}L(\theta)&=\half\sum\limits_{n\ge1}\frac{\sin(2n\theta)}{n^2}
\end{align}

\noindent with \eqref{lob2} especially recommended for use in computation. However, comparing the runtimes of these functions using Mathematica implementations reveals that not only does the error in \eqref{lob2} become non-negligible for practically-sized $\theta$, but also that in the Mathematica architecture, \eqref{lob3} vastly outperforms \eqref{lob1} and \eqref{lob2} on $\theta\in[0,2\pi)$, and that Mathematica optimizes the infinite sum to run faster than a sum with a built-in cutoff; i.e., 

\begin{align}
\label{lob4}L(\theta,N)=\half\sum\limits_{n=1}^N\frac{\sin(2n\theta)}{n^2}
\end{align}

\noindent evaluates slower than \eqref{lob3} even for $N$ as small as 1000. 

\newpage

\section*{Acknowledgements}

We would further like to thank Lazaros Gallos, Parker Hund, and the rest of the DIMACS REU organization, and the Rutgers Mathematics Department for their support, without which this work would not have been possible. We would like to thank Kei Nakamura and Alice Mark for taking the time to discuss this material with us. We would most of all like to thank Professor Alex Kontorovich for his guidance and mentorship on this work.


\begin{thebibliography}{9}
\bibitem{All66}
N. D. Allan. ``The problem of the maximality of arithmetic groups." In: \emph{Algebraic Groups and Discontinuous Subgroups (Proc. Sympos. Pure Math., Boulder, Colo., 1965)} (1966), pp. 104--109.

\bibitem{Bia92}
Luigi Bianchi. ``Sui gruppi di sostituzioni lineari con coe cienti appartenenti a corpi quadratici immaginari." In: \emph{Math. Ann.} 40.3 (1892), pp. 332--412. 

\bibitem{BM13}
Mikhail Belolipetsky and John Mcleod. ``Reflective and quasi-reflective {B}ianchi groups." In \emph{Transform. Groups} 18.4 (2013), pp. 971--994. 

\bibitem{BS04}
Alexander I. Bobenko and Boris A. Springborn. ``Variational principles for circle patterns and Koebe's theorem." In: \emph{Trans. Amer. Math. Soc} 356 (2004), pp. 659--689. 

\bibitem{Ver91}
Yves de Colin de Verdi\`{e}re. ``Un principe variationnel pour les empilements de cercles." In: \emph{Inventiones mathematicae} 104.3 (1991), pp. 655--669. {\sc URL}: \url{http://eudml.org/doc/143902}. 

\bibitem{KN18}
Alex Kontorovich and Kei Nakamura. ``Geometry and arithmetic of crystallographic sphere packings." In: \emph{Proceedings of the National Academy of Sciences} (2018). {\sc ISSN}: 00278424. {\sc URL}: \url{https://www.pnas.org/content/early/2018/12/21/1721104116}. 

\bibitem{Kon17}
Alex Kontorovich. \emph{Letter to Bill Duke}. 2017. {\sc URL}: \url{https://math.rutgers.edu/~alexk/files/LetterToDuke.pdf}. 

\bibitem{Mcl10}
John Mcleod. ``Hyperbolic reflection groups associated to the quadratic forms $-3x_0^2+x_1^2+\dots+x_n^2$." In: \emph{Geom. Dedicata} 152 (2010), pp. 1--16. 

\bibitem{Mcl13}
John Mcleod. ``Arithmetic Hyperbolic Reflection Groups." PhD thesis. Durham University, 2013. 

\bibitem{Mil82}
John Milnor, ``Hyperbolic Geometry: the First 150 Years." In: \emph{Bulletin of the American Mathematical Society} 6.1 (Jan. 1982), pp. 9--23. 

\bibitem{Riv86}
Igor Rivin. ``On Geometry of Convex Polyhedra in Hyperbolic 3-Space." PhD thesis. Princeton University, 1986. {\sc URL}: \url{http://gateway.proquest.com/openurl?url_ver=Z39.88-2004&rft_val_fmt=info:ofi/fmt:kev:mtx:dissertation&res_dat=xri:pqdiss&rft_dat=xri:pqdiss:8626178}. 

\bibitem{Riv94}
Igor Rivin. ``Euclidean Structures on Simplicial Surfaces and Hyperbolic Volume." In: \emph{Annals of Mathematics} 139.3 (1994), pp. 553--580. {\sc ISSN}: 0003486X, {\sc URL}: \url{http://www.jstor.org/stable/2118572}. 

\bibitem{Ruz90}
O. P. Ruzmanov, ``Subgroups of reflections in Bianchi groups." In: \emph{Uspekhi Mat. Nauk} 45 (1990), pp. 189-190. 

\bibitem{Sha90}
M. K. Shaiheev. ``Reflective subgroups in Bianchi groups." In: \emph{Selecta Math. Soviet} 9 (1990), pp. 315--322. 

\bibitem{Vin67}
\`{E}rnest Vinberg. ``Discrete Groups Generated by Reflections in Loba\^{c}evski\u{i} Spaces." In: \emph{Mathematics of the USSR-Sbornik} 1.3 (1967). {\sc URL}: \url{http://stacks.iop.org/0025-5734/1/i=3/a=A08}. 

\bibitem{Vin72}
\`{E}rnest Vinberg. ``The groups of units of certain quadratic forms." In: \emph{Mat. Sb. (N.S.)} 87.129 (1972), pp.18--36. 

\bibitem{Vin90}
\`{E}rnest Vinberg. ``Reflective subgroups in Bianchi groups." In: \emph{Selecta Math. Soviet} 9 (1990), pp. 309--314. 

\bibitem{Vin93}
\`{E}rnest Vinberg, ``Volumes of non-Euclidean polyhedra." In: \emph{Russ. Math. Surv.} 2.48 (1993), pp. 15--45. 

\bibitem{Zie04}
G{\"u}nter M. Ziegler. ``Convex Polytopes: Extremal Constructions and $f$-Vector Shapes." In: \emph{IAS/Park City Mathematics Series} 14 (2004). 
\end{thebibliography}
\end{document}